\documentclass[12pt, a4paper]{amsart}
\usepackage{amsmath}
\usepackage{amsfonts}
\usepackage{amssymb}
\usepackage[all]{xy}           %xypic macro for latex2.09
\usepackage{bbding}
\usepackage{txfonts}
\usepackage{amscd}
\usepackage{amsthm}
\usepackage[T1]{fontenc}
\usepackage[shortlabels]{enumitem}
\usepackage{ifpdf}
\ifpdf
  \usepackage[colorlinks,final,backref=page,hyperindex]{hyperref}
\else
  \usepackage[colorlinks,final,backref=page,hyperindex]{hyperref}
\fi
\usepackage{tikz}
\usepackage[active]{srcltx}

\makeatletter
\numberwithin{equation}{section}

\newtheorem{thm}{Theorem}[section]
\newtheorem{lem}[thm]{Lemma}
\newtheorem{cor}[thm]{Corollary}
\newtheorem{pro}[thm]{Proposition}
\newtheorem{ex}[thm]{Example}
\newtheorem{rmk}[thm]{Remark}
\newtheorem{defi}[thm]{Definition}

\newcommand{\fl}{\mathbf l}
\newcommand{\fr}{\mathbf r}

\newcommand{\g}{\mathfrak{g}}

\newcommand{\kt}{\mathfrak{t}}
\newcommand{\kl}{\mathfrak{l}}
\newcommand{\kr}{\mathfrak{r}}

\newcommand{\bz}{\mathbb{Z}}

\newcommand{\ad}{\mathrm{ad}}

\newcommand{\CYBE}{\mathrm{CYBE}}
\newcommand{\LYBE}{\mathrm{LYBE}}
\newcommand{\ZYBE}{\mathrm{ZYBE}}
\newcommand{\End}{\mathrm{End}}
\newcommand{\id}{\mathrm{id}}

\newcommand{\tr}{\triangleright}
\newcommand{\tl}{\triangleleft}

\begin{document}

\title[Lie bialgebras via Zinbiel and Leibniz bialgebras]
{Lie bialgebras constructed from Zinbiel bialgebras and Leibniz bialgebras}

\author[B.~Hou]{Bo Hou}
\address{School of Mathematics and Statistics, Henan University, Kaifeng 475004, China}
\email{bohou1981@163.com}

\author[Y.~Lin]{Yuanchang Lin}
\address{School of Mathematics, North University of China, Taiyuan 030051, China}
\email{linyuanchang@mail.nankai.edu.cn}

\date{\today}

\begin{abstract}
There is a Lie algebra structure on the tensor product of a Leibniz algebra and a Zinbiel algebra for the operads of Leibniz algebras and Zinbiel algebras are Koszul dual. 
In this paper, we extend such conclusion to the context of bialgebras.
We show that there is a Lie bialgebra structure on the tensor product of a Leibniz bialgebra and a quadratic Zinbiel algebra, while there is an infinite-dimensional Lie bialgebra structure on the tensor product of a Zinbiel bialgebra and a quadratic $\bz$-graded Leibniz algebra.
For a special quadratic $\bz$-graded Leibniz algebra, the property that its tensor product with a Zinbiel bialgebra forms a Lie bialgebra characterizes the Zinbiel bialgebra.
By examining the relationship between solutions of the classical Yang-Baxter equation in a Zinbiel algebra (resp. a Leibniz algebra) and solutions
of the classical Yang-Baxter equation in the induced Lie algebra, we prove that the induced
Lie bialgebra is quasi-triangular (resp. triangular, factorizable) whenever the original
Zinbiel bialgebra (resp. Leibniz bialgebra) is quasi-triangular (resp. triangular,
factorizable). 
Finally, we present a construction of a quasi-Frobenius Lie algebra on the tensor product of a quasi-Frobenius Zinbiel algebra and a quadratic Leibniz algebra.
\end{abstract}

\keywords{Lie bialgebra, Zinbiel bialgebra, Leibniz bialgebra, classical Yang-Baxter equation,
$\mathcal{O}$-operator, quasi-Frobenius Lie algebra}
\subjclass[2020]{
17A32, %Leibniz algebras
%17A30, %Nonassociative algebras satisfying other identities
%17D25, %Lie-admissible algebras
%17A36, %Automorphisms, derivations, other operators (nonassociative rings and algebras)
17A60, %Structure theory for nonassociative algebras
%17B10, %Representations, algebraic theory
17B38, %Yang-Baxter equations and Rota-Baxter operators
%17B40, %Automorphisms,derivations,other operators
%17B60, %Lie (super)algebras associated to other structures (associative, Jordan, ect.)
17B62. %Lie bialgebras; Lie coalgebras
}
\maketitle
\vspace{-10mm}
\tableofcontents %Ŀ¼

\allowdisplaybreaks

\vspace{-10mm}
%%%%%%%%%%%%%%%%%%%%%%%%%%%%%%%%%%%%%%%%%%%%%%%%%%%%%%%%%%%%%%%%%%%%%%%%%%%%%%%%%
%    section  1   Introduction
%%%%%%%%%%%%%%%%%%%%%%%%%%%%%%%%%%%%%%%%%%%%%%%%%%%%%%%%%%%%%%%%%%%%%%%%%%%%%%%%%%%%%%
\section{Introduction}\label{sec:intr}
There is a Lie algebra structure on the tensor product of a Leibniz algebra and a Zinbiel algebra, 
which fits into the Koszul duality interpretation. 
In this paper, we extend such construction in the context of bialgebras.
Specifically, we construct Lie bialgebras on the tensor product of Leibniz bialgebras and quadratic Zinbiel algebras, and infinite-dimensional Lie bialgebras from the affinization of Zinbiel bialgebras by quadratic $\bz$-graded Leibniz algebras respectively.
We also show that the constructed Lie bialgebra inherits the quasi-triangular (resp. triangular, factorizable) property from the original Zinbiel (resp. Leibniz) bialgebra.

%%%%%%%%%%%%%%%%%%%%%%%%%%%%%%%%%%%%%%%%%%%%%%%%%%%%%%%%%%%%%%%%%%%%%%%%%%%%%%%%
\subsection{Lie bialgebras and their construction}
A bialgebra structure on a given algebra is obtained as a coalgebra structure that induces the same algebra structure on the dual space, subject to a set of compatibility conditions between the product and coproduct.
One of the most famous examples of bialgebra is the Lie bialgebra \cite{Dri}.
Lie bialgebras can be viewed as an infinitesimalization of Poisson-Lie groups. 
This concept was first introduced by Drinfeld in the context of the Yang-Baxter equations and quantum groups. 
Since then, it has found applications in many other areas of mathematics and mathematical physics, such as the theory of Hopf algebra deformations of universal enveloping algebras \cite{ES}, string topology and symplectic field theory \cite{CFL}, the Goldman-Turaev theory of free loops in Riemann surfaces with boundaries \cite{Gol}, and beyond.
There have been many bialgebra theories for other algebra structures that essentially follow the approach of Lie bialgebras, including: antisymmetric infinitesimal bialgebras \cite{Agu,Bai1}, left-symmetric bialgebras \cite{Bai}, Jordan bialgebras \cite{Zhe}, Novikov bialgebras \cite{HBG}, perm bialgebras \cite{Hou,LZB}, Leibniz bialgebras \cite{TS,BLST}, anti-pre-Lie Poisson bialgebras \cite{LB}, Zinbiel bialgebras \cite{Wan}, and so on.
Recently, the extension problem of Lie bialgebras \cite{Hon} and factorizability of Lie bialgebras \cite{LS} have been further studied.

A well-known phenomenon from operadic theory states that if $A$ and $B$ are algebras over binary quadratic operads that are Koszul dual to each other, then their tensor product $A\otimes B$ can be equipped with a bracket $[-,-]$ such that $(A\otimes B, [-,-])$ forms a Lie algebra \cite{GK}. 
Based on this fact, Hong, Bai, and Guo introduced the notion of a Novikov bialgebra to construct infinite-dimensional Lie bialgebras via Novikov bialgebra affinization \cite{HBG}. 
Lin, Zhou, and Bai later showed that the tensor product of a perm bialgebra and a quadratic pre-Lie algebra (resp. a pre-Lie bialgebra and a quadratic perm algebra) admits a Lie bialgebra structure \cite{LZB}. 
Notably, the operads of perm algebras and pre-Lie algebras are Koszul dual to each other \cite{Lo1}. 
In this paper, we construct quasi-triangular (resp. triangular, factorizable) Lie bialgebras using Leibniz bialgebras, and we also construct infinite-dimensional Lie bialgebras via the affinization of Zinbiel bialgebras.

%%%%%%%%%%%%%%%%%%%%%%%%%%%%%%%%%%%%%%%%%%%%%%%%%%%%%%%%%%%%%%%%%%%%%%%%%%%%%%%%
\subsection{Constructions of Lie bialgebras from Leibniz bialgebras}
Leibniz algebras were first discovered by Bloh under the name of D-algebras \cite{Blo} and later introduced by Loday \cite{LP} with the motivation of studying periodicity in algebraic K-theory.
The (co)homology and homotopy theories of Leibniz algebras were established in \cite{Gao,LP}. 
Recently, Leibniz algebras were studied from different aspects due to applications in both mathematics and physics, such as integration, deformation quantization, rational homotopy theory,
and higher gauge theories \cite{BW,Cov,DW,Liv}. 
In \cite{TS}, the bialgebra theory and the classical Yang-Baxter equation for Leibniz algebras were introduced, where Tang and Sheng established the equivalence between Leibniz bialgebras and Manin triples of Leibniz algebras, and subsequently applied twisting theories and relative Rota-Baxter operators to study triangular Leibniz bialgebras.
A one-to-one correspondence between factorizable Leibniz bialgebras and quadratic Rota-Baxter Leibniz algebras of nonzero weights was established by Bai, Liu, Sheng, and Tang \cite{BLST}, who also introduced the notions of quasi-triangular and factorizable Leibniz bialgebras.
Here, we use quasi-triangular (resp. triangular, factorizable) Leibniz bialgebras to construct quasi-triangular (resp. triangular, factorizable) Lie bialgebras. 

By showing that the tensor product of a Leibniz (co)algebra and a Zinbiel (co)algebra admits a Lie (co)algebra structure, we are able to construct Lie bialgebras on the tensor product of Leibniz bialgebras and quadratic Zinbiel algebras. 
Manin triples of Lie algebras (resp. Leibniz algebras) provide an equivalent description of Lie bialgebras (resp. Leibniz bialgebras). 
We present a natural construction of Manin triples of Lie algebras from those of Leibniz algebras. 
By establishing the relationship between solutions of the classical Yang-Baxter equation ($\CYBE$) in the induced Lie algebra and solutions of the Leibniz Yang-Baxter equation ($\LYBE$) in the original Leibniz algebra, we can construct quasi-triangular (resp. triangular, factorizable) Lie bialgebras from the corresponding Leibniz bialgebras.
Moreover, we interpret symmetric solutions of the $\LYBE$ in a Leibniz algebra and skew-symmetric solutions of the $\CYBE$ in the induced Lie algebra via $\mathcal{O}$-operators on Leibniz algebras and Lie algebras, respectively. 
We summarize these results in the following diagram:
$$
\xymatrix@C=1.4cm@R=0.5cm{
\txt{\small $\mathcal{O}$-operators \\ \small of Leibniz algebra} \ar[d] &
\txt{\small solutions \\ \small of $\LYBE$ } \ar[d]\ar[r]\ar[l]&
\txt{\small Leibniz \\ \small bialgebras} \ar[d]\ar[r]&
\txt{\small Manin triple of \\ \small Leibniz algebras}\ar[d]\ar[l] \\
\txt{\small $\mathcal{O}$-operators \\ \small of Lie algebra} &
\txt{\small solutions \\ \small of $\CYBE$ } \ar[r]\ar[l]&
\txt{\small Lie \\ \small bialgebras} \ar[r]&
\txt{\small Manin triple of \\ \small Lie algebras}\ar[l]}
$$

%%%%%%%%%%%%%%%%%%%%%%%%%%%%%%%%%%%%%%%%%%%%%%%%%%%%%%%%%%%%%%%%%%%%%%%%%%%%%%%%
\subsection{Zinbiel bialgebra and their affinization constructions of Lie bialgebras}
Zinbiel algebras were introduced by Loday in \cite{Lo1}. 
Under Koszul duality, the operad of Zinbiel algebras is dual to that of Leibniz algebras. Zinbiel algebras are also known as pre-commutative algebras or chronological algebras \cite{Kaw}, and they are equivalent to commutative dendriform algebras \cite{Agu}. 
They play an important role in the definition of pre-Gerstenhaber algebras \cite{AAC}. Recently, Zinbiel algebras have appeared in the study of rack cohomology \cite{CFLM}, number theory \cite{Cha}, and in the construction of Cartesian differential categories \cite{IP}. 
Moreover, Wang developed a bialgebra theory for Zinbiel algebras and studied quasi-triangular and factorizable Zinbiel bialgebras \cite{Wan}.

A crucial construction of infinite-dimensional Lie algebras is known as the process of affinization. 
Roughly speaking, the affinization of a given algebra structure consists of defining an infinite-dimensional algebra structure and then obtaining another algebra structure on the tensor product of the original algebra with the infinite-dimensional one, which in turn can resolve the original algebra structure. 
The affinization of Novikov bialgebras, perm bialgebras, and dendriform $D$-bialgebras has been studied in \cite{HBG}, \cite{LZB}, and \cite{Hou1}, respectively, using an infinite-dimensional algebra structure on the vector space of Laurent polynomials. 
In this paper, we define a $\bz$-graded Leibniz algebra $\widehat{V}_{4}$ in Example \ref{ex:gr-Leib}, and obtain an affinization characterization of Zinbiel algebras by $(\widehat{V}_{4}, \circ)$.
Moreover, we show that there exists a completed
Lie coalgebra structure on the tensor product of a Zinbiel coalgebra and a completed
Leibniz coalgebra, which could give a characterization of the Zinbiel coalgebra by
its affinization with a completed Leibniz coalgebra structure on the vector space of
$\widehat{V}_{4}$. 
Therefore, we show that there is a natural completed Lie bialgebra structure on the tensor product of a Zinbiel bialgebra and a quadratic $\bz$-graded Leibniz algebra. 
By the quadratic $\bz$-graded Leibniz algebra $(\widehat{V}_{4}, \circ, \omega)$, an affinization characterization of a Leibniz bialgebra is given.

\smallskip
\noindent
{\bf Theorem } [Theorems~\ref{thm:bialg} and \ref{thm:indu-sLiebia-zin}]
{\it Let $(A, \cdot, \Delta)$ be a Zinbiel bialgebra, $(B=\oplus_{i\in\bz}B_{i},
\circ, \omega)$ be a quadratic $\bz$-graded Leibniz algebra and
$(\g:=A\otimes B,\; [-,-])$ be the induced Lie algebra from $(A, \cdot)$ and $(B, \circ)$. Define two linear maps $\vartheta:
B\rightarrow B\,\hat{\otimes}\,B$ by Eq. \eqref{quad-dual} and $\delta: \g\rightarrow
\g\,\hat{\otimes}\,\g$ by Eq. \eqref{colie}. Then $(\g, [-, -], \delta)$ is a completed
Lie bialgebra. In particular, if $(B=\oplus_{i\in\bz}B_{i}, \circ, \omega)$ is the quadratic
$\bz$-graded Leibniz algebra $(\widehat{V}_{4}, \circ, \omega)$ given in Example
\ref{ex:Leib-quad}, then $(\g, [-, -], \delta)$ is a completed Lie bialgebra if and
only if $(A, \cdot, \Delta)$ is a Zinbiel bialgebra. Moreover, we have
\begin{enumerate}\itemsep=0pt
\item[$(i)$] $(A\otimes B, [-,-], \delta)$ is quasi-triangular if
     $(A, \cdot, \Delta)$ is quasi-triangular;
\item[$(ii)$] $(A\otimes B, [-,-], \delta)$ is triangular if
     $(A, \cdot, \Delta)$ is triangular.
\end{enumerate}}

\smallskip\noindent
In addition, we use quasi-Frobenius Zinbiel algebras to construct quasi-Frobenius
$\bz$-graded Lie algebras, thereby providing many examples of quasi-Frobenius Lie algebras.

%%%%%%%%%%%%%%%%%%%%%%%%%%%%%%%%%%%%%%%%%%%%%%%%%%%%%%%%%%%%%%%%%%%%%%%%%%%%%%%%
\subsection{Outline of the paper}
The paper is organized as follows.
In Section~\ref{sec:zinb}, we recall the notions of Zinbiel algebras and Leibniz algebras,
and show that there are Lie (co)algebra structures on the tensor product of Zinbiel (co)algebras and Leibniz (co)algebras.

In Section~\ref{sec:Leib-Liebia}, 
by giving a Zinbiel coalgebra structure on the vector space of a quadratic Zinbiel algebra, we construct a Lie bialgebra on the tensor product of a Leibniz bialgebra and a quadratic Zinbiel algebra in Theorem~\ref{thm:liebia-LZ}. 
We also present a natural construction of Manin triples of Lie algebras from Manin triples of Leibniz algebras in Proposition~\ref{pro:comm}. 
We construct solutions of the $\CYBE$ in the induced Lie algebra from solutions of the $\LYBE$ in the original Leibniz algebra, and prove that 
the former solution is skew-symmetric if the latter solution is symmetric.
Moreover, the symmetric part of the former solution is Lie-invariant provided that the skew-symmetric part of the latter is Leib-invariant. 
Furthermore, we show in Theorem~\ref{thm:indu-sLiebia-lei} that the induced Lie bialgebra is quasi-triangular (resp. 
triangular, factorizable) if the original Leibniz bialgebra is quasi-triangular 
(resp. triangular, factorizable).

In Section~\ref{sec:inf-lie}, we show that there is a natural completed Lie bialgebra structure on the tensor product of a Zinbiel bialgebra and a quadratic $\bz$-graded Leibniz algebra. 
By a special quadratic $\bz$-graded Leibniz algebra, an affinization characterization of Zinbiel bialgebras is given in Theorem~\ref{thm:bialg}.
Furthermore, from solutions of the Zinbiel Yang-Baxter equation ($\ZYBE$) in a Zinbiel algebra, we construct completed solutions of the $\CYBE$ in the induced Lie algebra. 
The latter solutions can be regarded as an  affinization of the former solutions (see Proposition~\ref{pro:ZYBE-CYBE} and Corollary~\ref{cor:sZYBE-sCYBE}). 
Therefore, we obtain that the induced completed Lie algebra is quasi-triangular (resp. triangular) whenever the original Zinbiel bialgebra is quasi-triangular (resp. triangular). 
Finally, constructions of quasi-Frobenius $\bz$-graded Lie algebras from quasi-Frobenius Zinbiel algebras are obtained in Proposition~\ref{pro:quasi-frob}.

Throughout this paper, we work over a base field $\Bbbk$ of characteristic $0$, and all vector spaces and algebras are assumed to be finite-dimensional unless otherwise specified. 
We adopt the following conventions and notations.
\begin{enumerate}
	\item 
	Let $(A, \diamond)$ be a vector space equipped with a binary operation $\diamond: A \otimes A \to A$.
	Let $\fl_\diamond(a)$ and $\fr_\diamond(a)$ denote the left and right multiplication operators, that is 
	\begin{equation*}
		\fl_\diamond(a)b = \fr_\diamond(b)a = a \diamond b, \;\; \forall a, b \in A.
	\end{equation*}
	We also simply denote them by $\fl(a)$ and $\fr(a)$ respectively without confusion.
	If $(A, [\ ,\ ])$ is a Lie algebra, we let $\ad_{[,]}(a) = \ad(a)$ denote the adjoint operator, that is 
	\begin{equation*}
		\ad_{[,]}(a)b = \ad(a) b = [a, b], \;\; \forall a, b \in A.
	\end{equation*}
	
	\item 
	Let $V$ be a vector space. 
	Denote the flip operator by $\tau: V\otimes V\rightarrow V\otimes V$, which is defined by 
	\begin{equation*}
		\tau(u\otimes v) = v \otimes u, \;\; \forall u, v \in V.
	\end{equation*}
	
	\item 
	Let $(A, \diamond)$ be a vector space equipped with a binary operation $\diamond: A \otimes A \to A$.
	Let $r = \sum_{i} a_i \otimes b_i \in A \otimes A$.
	Set 
	\begin{equation*}
		r_{12} = \sum\nolimits_{i} a_i \otimes b_i \otimes 1, \;\;
		r_{13} = \sum\nolimits_{i} a_i \otimes 1 \otimes b_i, \;\;
		r_{23} = \sum\nolimits_{i} 1 \otimes a_i \otimes b_i,
	\end{equation*}
	where $1$ is the unit if $(A, \diamond)$ is unital or a symbol playing a similar role as the unit for the non-unital cases.
	Further define compound symbols such as $r_{12} \diamond r_{13}$ by
	\begin{equation*}
		r_{12} \diamond r_{13} = \sum\nolimits_{i, j} a_i \diamond a_j \otimes b_i \otimes b_j.
	\end{equation*}
	\item 
	Denote the standard pairing between the dual space $V^*$ and $V$ by 
	\begin{equation*}
		\langle \ ,\ \rangle: V^* \times V \to \mathbb{K}, \;\; \langle f, v \rangle := f(v), \;\; \forall f \in V^*, \; v \in V.
	\end{equation*} 
	
	\item 
	Let $V, W$ be two vector spaces and $T: V \to W$ be a linear map.
	Denote the dual map by $T^*: W^* \to V^*$, which is defined by 
	\begin{equation*}
		\langle v, T^*(w^*)\rangle = \langle T(v), w^*\rangle, \;\; \forall v \in V, w^* \in W^*.
	\end{equation*}

	\item 
	Let $A,V$ be  vector spaces.
	For a linear map $\mu: A \to \End(V)$, define a linear map $\mu^*: A \to \End(V^*)$ by $\mu^*(a) = (\mu(a))^*$, or, more explicitly
	\begin{equation*}
		\langle \mu^*(a)v^*, u\rangle = \langle v^*, \mu(a) u\rangle, \;\; \forall a \in A, u \in V, v^* \in V^*.
	\end{equation*}
	
\end{enumerate}

%%%%%%%%%%%%%%%%%%%%%%%%%%%%%%%%%%%%%%%%%%%%%%%%%%%%%%%%%%%%%%%%%%%%%%%%%%%%%%%%%
%    section  2  Lie coalgebras on tensor product of Zinbiel coalgebras and Leibniz coalgebras
%%%%%%%%%%%%%%%%%%%%%%%%%%%%%%%%%%%%%%%%%%%%%%%%%%%%%%%%%%%%%%%%%%%%%%%%%%%%%%%%%%%%%%
\section{Lie (co)algebras on tensor product of Zinbiel (co)algebras and Leibniz (co)algebras}\label{sec:zinb}
In this section, we recall the notions of Zinbiel (co)algebras and Leibniz (co)algebras, and show that there are Lie (co)algebra structures on the tensor product of Zinbiel (co)algebras and Leibniz (co)algebras.

\subsection{Zinbiel algebras and Leibniz algebras}
\begin{defi}\label{def:zin-alg}
	A {\bf (left) Zinbiel algebra} $(A, \cdot)$ is a vector space $A$ together with a binary operation $\cdot: A \otimes A \rightarrow A$ satisfying the following Zinbiel identity:
	\begin{equation*}
		a_{1}\cdot(a_{2}\cdot a_{3})=(a_{1}\cdot a_{2})\cdot a_{3}+(a_{2}\cdot a_{1})\cdot a_{3}, \;\; \forall a_{1}, a_{2}, a_{3}\in A.
	\end{equation*}
\end{defi}
A Zinbiel algebra $A$ is left commutative, i.e., $a_{1}\cdot(a_{2}\cdot a_{3})=a_{2}\cdot(a_{1}\cdot a_{3})$ for all $a_{1}, a_{2}, a_{3}\in A$. 
For the classification of low-dimensional Zinbiel algebras, see \cite{AKO,AJK,DT}. 
Below we present some examples of Zinbiel algebras.

\begin{ex}\label{ex:zin-alg1}
	\begin{enumerate}[label=(\roman*), leftmargin=1.6em]
		\item 
		$(A, \cdot)$ is a 2-dimensional Zinbiel algebra with a basis $\{e_{1}, e_{2}\}$ whose non-zero product $\cdot$ is given by $e_{1}\cdot e_{1}=e_{2}$.

		\item 
		$(A, \cdot)$ is a 3-dimensional Zinbiel algebra with a basis $\{e_{1}, e_{2}, e_{3}\}$ whose non-zero product $\cdot$ is given by $e_{1}\cdot e_{1}=e_{1}\cdot e_{2}=e_{2}\cdot e_{1}=e_{3}$.
		
		\item 
		$(\Bbbk[x], \cdot)$ is a Zinbiel algebra, where $\Bbbk[x]$ denotes the set of polynomials in one indeterminate over $\Bbbk$ and the product $\cdot$ is given by
		\begin{equation*}
			x^{m}\cdot x^{n}=\frac{1}{m+1}x^{m+n+1}, \;\; \forall m, n \in \mathbb{N}.
		\end{equation*}
	\end{enumerate}
\end{ex}

There are close connections between Zinbiel algebras and various types of algebras.
A Zinbiel algebra is equivalent to a commutative dendriform algebra. 
Consequently, if $(A, \cdot)$ is a Zinbiel algebra, then $(A, \diamond)$ is a commutative associative algebra, where $a_1 \diamond a_2 := a_1 \cdot a_2 + a_2 \cdot a_1$ for all $a_1, a_2 \in A$.
Moreover, every Zinbiel algebra under the commutator product yields a Tortkara algebra \cite{DIM,GKK}.

\begin{defi}\label{def:zrep}
	A {\bf representation} of a Zinbiel algebra $(A, \cdot)$ is a triple $(V, \kl, \kr)$, 
	where $V$ is a vector space, $\kl, \kr: A \rightarrow \End(V)$ are linear maps such that the following equations hold for all $a_{1}, a_{2}\in A$:
	\begin{align*}
		&\kl(a_{2})\kl(a_{1})=\kl(a_{2} \cdot a_{1}) + \kl(a_{1} \cdot a_{2}), \\
		&\kr(a_{1} \cdot a_{2})=\kr(a_{2})\kr(a_{1}) + \kr(a_{2}) \kl(a_{1}) = \kl(a_{1}) \kr(a_{2}).
	\end{align*}
	Two representations $(V, \kl, \kr)$ and $(V', \kl', \kr')$ are called {\bf equivalent} if there exists a linear isomorphism $f: V \rightarrow V'$ such that $f(\kl(a)(v))=\kl'(a)(f(v))$ and $f(\kr(a)(v))=\kr'(a)(f(v))$ for all $a\in A$ and $v\in V$.
\end{defi}

Let $(A, \cdot)$ be a Zinbiel algebra and $\kl, \kr: A \rightarrow \End(V)$ be linear maps. 
Define a binary operation, still denoted by $\cdot$, on $A \oplus V$ by
\begin{equation*}
	(a + u) \cdot (b + v) := a \cdot b + (\kl(a)v + \kr(b) u), \;\; \forall a, b \in A, u, v \in V.
\end{equation*}
Then $(V, \kl, \kr)$ is a representation of $(A, \cdot)$ if and only if $(A \oplus V, \cdot)$ is a  Zinbiel algebra, which is denoted by $(A \ltimes_{\kl, \kr} V, \cdot)$ and called the {\bf semi-direct product Zinbiel algebra by $(A, \cdot)$ and $(V, \kl, \kr)$}.

\begin{ex}[\cite{Wan}]
	Let $(A, \cdot)$ be a Zinbiel algebra. 
	Then $(A, \fl_{\cdot}, \fr_{\cdot})$ is a representation of $(A, \cdot)$, 
	called the {\bf regular representation}, 
	and $(A^*, \fl_{\cdot}^*+\fr_{\cdot}^*, -\fr_{\cdot}^*)$ is also a representation of $(A, \cdot)$, called the {\bf coregular representation}. 
\end{ex}

Recall that a bilinear form $\kappa$ on a Zinbiel algebra $(A, \cdot)$ is called {\bf invariant} if 
\begin{equation*}
	\kappa(a_{1}\cdot a_{2}, \; a_{3}) = \kappa(a_{2}, \; a_{1}\cdot a_{3} + a_{3}\cdot a_{1}), \;\; \forall a_{1}, a_{2}, a_{3} \in A.
\end{equation*}
A {\bf quadratic Zinbiel algebra} is a triple $(A, \cdot, \kappa)$ 
where $(A, \cdot)$ is a Zinbiel algebra and $\kappa$ is a nondegenerate skew-symmetric invariant bilinear form on $(A, \cdot)$.
If $(A, \cdot, \kappa)$ is a quadratic Zinbiel algebra, then one easily checks that $\kappa(a_{1} \cdot a_{2},\; a_{3}) = -\kappa(a_{1},\; a_{3} \cdot a_{2})$ for all $a_{1}, a_{2}, a_{3} \in A$.
Moreover, for a quadratic Zinbiel algebra $(A, \cdot, \kappa)$, the regular representation and coregular representation are equivalent.

\begin{defi}\label{def:lei-alg}
	A {\bf (left) Leibniz algebra} $(B, \circ)$ is a vector space $B$ together with a binary operation $\circ: B\otimes B\rightarrow B$ satisfying the following Leibniz identity:
	\begin{equation*}
		b_{1} \circ (b_{2} \circ b_{3})=(b_{1} \circ b_{2})\circ b_{3} + b_{2} \circ (b_{1} \circ b_{3}), \;\; \forall b_{1}, b_{2}, b_{3}\in B.
	\end{equation*}
	A {\bf $\bz$-graded Leibniz algebra} is a Leibniz
	algebra $(B, \circ)$ with a linear decomposition
	$B=\oplus_{i \in \bz }B_{i}$ such that each $B_{i}$ is finite-dimensional and 
	$B_{i} \circ B_{j}\subseteq B_{i+j}$ for all $i, j \in \bz$.
\end{defi}

\begin{ex}\label{ex:lei-alg}
	\begin{enumerate}[label=(\roman*), leftmargin=1.6em]
		\item 
		$(B, \circ)$ is a 2-dimensional Leibniz algebra with a basis $\{e_{1}, e_{2}\}$ whose non-zero product $\circ$ is given by $e_{2} \circ e_{1} = e_{1} = e_{2} \circ e_{2}$. 
		
		\item 
		$(B, \circ)$ is a 3-dimensional Leibniz algebra with a basis $\{e_{1}, e_{2}, e_{3}\}$ whose non-zero product $\circ$ is given by $e_{3} \circ e_{1}=e_{2}, \; e_{3} \circ e_{2} = e_{1}$.
	\end{enumerate}
\end{ex}

\begin{ex}\label{ex:gr-Leib}
	$(V, \diamond)$ is a 4-dimensional Leibniz algebra with a basis $\{v_{1}, v_{2}, v_{3}, v_{4}\}$ whose non-zero product $\diamond$ is given by
	\begin{equation*}
		v_{1} \diamond v_{2} = v_{1} = -v_{2} \diamond v_{1}, \;\;  
		v_{1}\diamond v_{3} = -v_{4}, \;\;
		v_{2} \diamond v_{3} = v_{3}.
	\end{equation*}
	Then we get an affine Leibniz algebra $(\widehat{V}_{4}:=
	V[\kt, \kt^{-1}] = \oplus_{i\in\bz} V_{i}, \ast)$, where $V_{i}=\Bbbk\{v_{1}\kt^{i}, v_{2}\kt^{i}, v_{3}\kt^{i}, v_{4}\kt^{i}\}$ and the product $\circ$ is given by 
	\begin{equation*}
		x\kt^{i} \circ y\kt^{j} = x \diamond y\kt^{i+j}, \;\; \forall x, y \in V, \; i, j \in \bz,
	\end{equation*}
	which is a $\bz$-graded Leibniz algebra.
\end{ex}

\begin{defi}\label{def:lrep}
	A {\bf representation} of a Leibniz algebra $(B, \circ)$ is a triple $(V, \kl, \kr)$,
	where $V$ is a vector space, $\kl, \kr: B \rightarrow \End(V)$ are linear maps such
	that the following equations hold for all $b_{1}, b_{2} \in B$:
	\begin{align*}
		&\kl(b_{1} \circ b_{2}) = \kl(b_{1})\kl(b_{2}) - \kl(b_{2})\kl(b_{1}),\\
		&\kr(b_{1})\kr(b_{2}) = \kr(b_{2} \circ b_{1}) - \kl(b_{2})\kr(b_{1}) = -\kr(b_{1})\kl(b_{2}).
	\end{align*}
	Two representations $(V, \kl, \kr)$ and $(V', \kl', \kr')$ are called {\bf equivalent} if there exists a linear isomorphism $f: V \rightarrow V'$ such that $f(\kl(b)(v))=\kl'(b)(f(v))$ and $f(\kr(b)(v))=\kr'(b)(f(v))$ for all $b \in B$ and $v \in V$.
\end{defi}

\begin{ex}
	Let $(B, \circ)$ be a Leibniz algebra. 
	Then $(B, \fl_{\circ}, \fr_{\circ})$ is a representation of $(B, \circ)$, 
	called the {\bf regular representation}, 
	and $(B^*, -\fl_{\circ}^*, \fl_{\circ}^*+\fr_{\circ}^*)$ is also a representation of $(B, \circ)$, called the {\bf coregular representation}. 
\end{ex}

A bilinear form $\omega$ on a Leibniz algebra $(B, \circ)$ is called {\bf invariant} if 
\begin{equation*}
	\omega(b_{1} \circ b_{2},\; b_{3}) = \omega(b_{1},\; b_{2} \circ b_{3} + b_{3} \circ b_{2}), \;\; \forall b_{1}, b_{2}, b_{3}\in B.
\end{equation*}
A Leibniz algebra $(B, \circ)$ with a nondegenerate skew-symmetric invariant bilinear form $\omega$ is called a {\bf quadratic Leibniz
algebra} (which is also called a skew-symmetric quadratic Leibniz algebra in \cite{BLST, TS}) and denoted by $(B, \circ, \omega)$. 
If $(B, \circ, \omega)$ is a quadratic Leibniz algebra, then $\omega(b_{1} \circ b_{2}, \; b_{3}) = -\omega(b_{2}, \; b_{1} \circ b_{3})$ for all $b_{1}, b_{2}, b_{3} \in B$.
Moreover, for a quadratic Leibniz algebra $(B, \circ, \omega)$, the regular representation and coregular representation are equivalent.

\subsection{Completed Lie coalgebras on tensor product}
Thanks to the Koszul duality between the operads of Zinbiel algebras and Leibniz algebras, the tensor product of a Zinbiel algebra and a Leibniz algebra carries a Lie algebra structure \cite{GK, LV}, as stated in the following result.
\begin{pro}\label{pro:L-Z-lie}
	Let $(A, \cdot)$ be a Zinbiel algebra and $(B, \circ)$ a Leibniz algebra.
	Define a binary operation $[-,-]$ on $A\otimes B$ by
	\begin{equation*}
		[a_{1} \otimes b_{1}, a_{2} \otimes b_{2}] = a_{1} \cdot a_{2} \otimes b_{1} \circ b_{2} - a_{2} \cdot a_{1} \otimes  b_{2} \circ b_{1}, \;\; \forall a_{1}, a_{2}\in A, \; b_{1}, b_{2}\in B.
	\end{equation*}
	Then $(A \otimes B,\; [-, -])$ is a Lie algebra, called the {\bf induced Lie algebra} from $(A, \cdot)$ and $(B, \circ)$.
\end{pro}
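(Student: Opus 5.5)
Skew-symmetry is immediate from the definition. Interchanging $a_1\otimes b_1$ and $a_2\otimes b_2$ turns $a_1\cdot a_2\otimes b_1\circ b_2 - a_2\cdot a_1\otimes b_2\circ b_1$ into its negative. Extending bilinearly, $[x,x]=0$ for all $x\in A\otimes B$ (characteristic $0$).

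The main work is the Jacobi identity, which it suffices to check on pure tensors $x_i=a_i\otimes b_i$, $i=1,2,3$. Expanding $[[x_1,x_2],x_3]$ gives four terms:
\begin{align*}
[[x_1,x_2],x_3] ={}& (a_1\cdot a_2)\cdot a_3\otimes (b_1\circ b_2)\circ b_3 - a_3\cdot(a_1\cdot a_2)\otimes b_3\circ(b_1\circ b_2)\\
&- (a_2\cdot a_1)\cdot a_3\otimes (b_2\circ b_1)\circ b_3 + a_3\cdot(a_2\cdot a_1)\otimes b_3\circ(b_2\circ b_1).
\end{align*}
Summing over the cyclic permutations produces twelve terms. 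Each has the form $p_\sigma(a)\otimes q_\sigma(b)$, where $p_\sigma$ and $q_\sigma$ are the same bracketing type applied to the same permutation $\sigma$ of the indices. The plan is to rewrite every Zinbiel monomial in the normal form $a_i\cdot(a_j\cdot a_k)$ and every Leibniz monomial $(b_i\circ b_j)\circ b_k$ in the normal form $b_i\circ(b_j\circ b_k)$, then check that the coefficients cancel.

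Concretely, I would use the Zinbiel identity in the form $(a_i\cdot a_j)\cdot a_k+(a_j\cdot a_i)\cdot a_k=a_i\cdot(a_j\cdot a_k)$. The sum of the two terms of type $(a\cdot a)\cdot a$ in $[[x_1,x_2],x_3]$ is
\[
(a_1\cdot a_2)\cdot a_3\otimes(b_1\circ b_2)\circ b_3-(a_2\cdot a_1)\cdot a_3\otimes(b_2\circ b_1)\circ b_3 .
\]
Substituting $(a_2\cdot a_1)\cdot a_3 = a_1\cdot(a_2\cdot a_3)-(a_1\cdot a_2)\cdot a_3$ turns this into $(a_1\cdot a_2)\cdot a_3\otimes\bigl((b_1\circ b_2)\circ b_3+(b_2\circ b_1)\circ b_3\bigr) - a_1\cdot(a_2\cdot a_3)\otimes (b_2\circ b_1)\circ b_3$. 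The Leibniz identity gives $(b_1\circ b_2)\circ b_3 + (b_2\circ b_1)\circ b_3=0$: both sides reduce to $b_1\circ(b_2\circ b_3)-b_2\circ(b_1\circ b_3)$ up to sign, and the two expressions cancel. So these two terms collapse to $-a_1\cdot(a_2\cdot a_3)\otimes(b_2\circ b_1)\circ b_3$. Applying the Leibniz identity once more rewrites this as $-a_1\cdot(a_2\cdot a_3)\otimes\bigl(b_2\circ(b_1\circ b_3)-b_1\circ(b_2\circ b_3)\bigr)$.

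After this reduction, everything is a combination of terms $a_i\cdot(a_j\cdot a_k)\otimes b_{i'}\circ(b_{j'}\circ b_{k'})$. Left commutativity of the Zinbiel product, $a_i\cdot(a_j\cdot a_k)=a_j\cdot(a_i\cdot a_k)$, cuts the Zinbiel monomials down to three, indexed by the rightmost factor $a_k$. For each fixed $k$, I would collect the Leibniz side and check that the coefficient vanishes. The main obstacle is this final bookkeeping of the twelve terms across the cyclic sum. I would organize it by the rightmost index $k$, which groups the terms into three blocks of four that are images of each other under cyclic permutation, so it suffices to verify one block.

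A cleaner fallback is the operadic argument cited in the paper (\cite{GK}): Leibniz and Zinbiel are Koszul dual, so $\mathrm{Zinb}\otimes_H\mathrm{Leib}\to\mathrm{Lie}$ is a morphism of operads, and the bracket above is the image of the Lie generator. I would nevertheless present the direct computation.
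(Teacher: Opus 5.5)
Your proof is correct, but it takes a different route from the paper. The paper gives no computation for this proposition. It simply appeals to the Koszul duality of the Zinbiel and Leibniz operads, i.e.\ the Ginzburg--Kapranov fact that the tensor product of algebras over Koszul dual binary quadratic operads carries a Lie bracket; this is essentially your ``fallback''.

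Your direct verification is self-contained and shows exactly which identities are used. These are the Zinbiel identity applied to the pair of $(a\cdot a)\cdot a$ terms in each $[[x_i,x_j],x_k]$, the Leibniz identity together with its consequence $(b_1\circ b_2+b_2\circ b_1)\circ b_3=0$, and left commutativity. The operadic argument buys generality (any Koszul dual pair) and a conceptual explanation of the bracket, but it hides the calculation.

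The bookkeeping you defer does close. Set $Z_3:=a_1\cdot(a_2\cdot a_3)=a_2\cdot(a_1\cdot a_3)$ and $L_{ijk}:=b_i\circ(b_j\circ b_k)$. The $Z_3$-block of the cyclic sum is then $L_{123}-L_{213}-L_{123}+L_{213}=0$, and the other two blocks are its cyclic images.

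There are two minor slips.
\begin{enumerate}
\item A single monomial $(a_i\cdot a_j)\cdot a_k$ cannot be rewritten in the normal form $a_i\cdot(a_j\cdot a_k)$. In the free Zinbiel algebra the six monomials of the former type are linearly independent, while the latter span only a $3$-dimensional subspace. So the reduction genuinely has to be done on the pairs, which is what your concrete step in fact does.
\item The operad morphism in your fallback goes $\mathrm{Lie}\to\mathrm{Zinb}\otimes_H\mathrm{Leib}$, not the other way. Restriction along a morphism $\mathcal{Q}\to\mathcal{R}$ turns $\mathcal{R}$-algebras into $\mathcal{Q}$-algebras, and your own phrase ``image of the Lie generator'' presupposes this direction.
\end{enumerate}
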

Dually, the tensor product of a Zinbiel coalgebra and a Leibniz coalgebra admits a Lie coalgebra structure, yielding the expected coalgebra analogues of Proposition~\ref{pro:L-Z-lie}. 
Before proceeding further, we introduce the notion of a completed tensor product to serve as the target space of more general coproducts. 
See \cite{Tak} for more details.

Let $U=\oplus_{i\in\bz}U_{i}$ and $V=\oplus_{j\in\bz}V_{j}$ be $\bz$-graded vector spaces with all $U_i$ and $V_i$ finite-dimensional.
We call the {\bf completed tensor product} of $U$ and $V$ to be the vector space
\begin{equation*}
	U\,\hat{\otimes}\,V := \prod_{i,j\in\bz}U_{i}\otimes V_{j}.
\end{equation*}
If $U$ and $V$ are finite-dimensional, then $U\,\hat{\otimes}\,V$ is just the usual tensor product $U\otimes V$. 
Moreover, a general term of $U\,\hat{\otimes}\,V$ is a possibly infinite sum $\sum_{i,j,\alpha}u_{i\alpha}\otimes v_{j\alpha}$, where $i, j\in\bz$ and $\alpha$ is in a finite index set (which might depend on $i, j$). 
With these notations, for linear maps $f: U\rightarrow U'$ and $g: V\rightarrow V'$,
define
\begin{equation*}
	f\hat{\otimes}g: U \hat{\otimes} V \rightarrow U' \hat{\otimes} V',
	\qquad \sum_{i,j,\alpha}u_{i,\alpha}\otimes v_{j, \alpha} \mapsto \sum_{i,j,\alpha} f(u_{i, \alpha})\otimes g(v_{j, \alpha}).
\end{equation*}
Moreover, a (completed) coproduct is defined as a linear map $\vartheta: V \rightarrow V \hat{\otimes} V$, and then the composite map $(\vartheta \hat{\otimes} \id)\vartheta: V \to V \hat{\otimes} V \hat{\otimes} V$ is well-defined.
Also the twisting map $\tau$ has its completion
\begin{equation*}
	\hat{\tau}: V \hat{\otimes} V\rightarrow V \hat{\otimes} V, \qquad \sum_{i,j,\alpha}u_{i, \alpha}\otimes v_{j, \alpha} \mapsto
	\sum_{i,j,\alpha} v_{j, \alpha} \otimes u_{i, \alpha}.
\end{equation*}
Finally, let $U$ be a vector space and $V = \oplus_{j \in \mathbb{Z}}V_j$ be a $\bz$-graded vector space with all $V_i$ finite-dimensional. For all $\sum_{i_1, \cdots, i_l} u_{i_1}
\otimes \cdots \otimes u_{i_l} \in U \otimes \cdots \otimes U$ and
$\sum_{j_1, \cdots, j_l, \alpha} v_{j_1\alpha} \otimes \cdots
\otimes v_{j_l\alpha} \in V \hat{\otimes} \cdots \hat{\otimes} V$,
set
\begin{equation*}
	\sum_{i_1, \cdots, i_l} u_{i_1} \otimes \cdots \otimes u_{i_l} \bullet \!\! \sum_{j_1, \cdots, j_l, \alpha} v_{j_1\alpha} \otimes \cdots \otimes v_{j_l\alpha} :=  \!\!\sum_{j_1, \cdots, j_l, \alpha}  \sum_{i_1, \cdots, i_l}  (u_{i_1} \otimes v_{j_1\alpha}) \otimes \cdots \otimes (u_{i_l} \otimes v_{j_l\alpha}) \in (U \otimes V) \hat{\otimes} \cdots \hat{\otimes} (U \otimes V).
\end{equation*}

\begin{defi}\label{de:lei-coa}
	\begin{enumerate}[leftmargin=1.6em, label=(\roman*)]
		\item 
		A {\bf completed Leibniz coalgebra} is a pair $(B, \vartheta)$, where $B=\oplus_{i\in\bz}B_{i}$ is a $\bz$-graded vector space with all $B_i$ finite-dimensional and $\vartheta: B\rightarrow B \,\hat{\otimes}\, B$ is a linear map satisfying
		\begin{equation}
			(\id\,\hat{\otimes}\,\vartheta)\vartheta=(\vartheta\,\hat{\otimes}\,\id)\vartheta +(\hat{\tau}\,\hat{\otimes}\,\id)(\id\,\hat{\otimes}\,\vartheta)\vartheta.  \label{lia1}
		\end{equation}
		When $B = B_{0}$, it is simply called a {\bf Leibniz coalgebra}.

		\item 
		A {\bf completed Lie coalgebra} is a pair $(\g, \delta)$, where $\g=\oplus_{i\in\bz}\g_{i}$ is a $\bz$-graded vector space with all $\g_i$ finite-dimensional and $\delta: \g\rightarrow\g\,\hat{\otimes}\,\g$ is a linear map satisfying Eq.~\eqref{lia1} and
		$\delta=-\hat{\tau}\,\delta$.
		When $\g = \g_{0}$, it is simply called a {\bf Lie coalgebra}.
	\end{enumerate}
\end{defi}

\begin{defi}
	A {\bf Zinbiel coalgebra} $(A, \Delta)$ is a vector space $A$ with a linear map $\Delta: A\rightarrow A\otimes A$ such that $(\id\otimes\Delta)\Delta = (\Delta\otimes\id)\Delta+(\tau\otimes\id) (\Delta\otimes\id)\Delta$;
\end{defi}

Now we give the dual version of Proposition~\ref{pro:L-Z-lie}.

\begin{pro}\label{pro:L-Z-colie}
	Let $(A, \Delta)$ be a Zinbiel coalgebra and $(B, \vartheta)$ be a completed Leibniz coalgebra. 
	Define a linear map $\delta: A\otimes B\rightarrow(A\otimes B) \,\hat{\otimes}\,(A\otimes B)$ by
	\begin{equation}
		\delta(a\otimes b)=(\id-\hat{\tau})\big(\Delta(a)\bullet\vartheta(b)\big),  \;\; \forall a \in A, b \in B.\label{colie}
	\end{equation}
	Then $(A\otimes B,\; \delta)$ is a completed Lie coalgebra.
\end{pro}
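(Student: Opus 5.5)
Skew-symmetry is built in: since $\delta(a\otimes b)=(\id-\hat{\tau})(X)$ with $X=\Delta(a)\bullet\vartheta(b)$, and $\hat{\tau}^2=\id$, we get $\hat{\tau}\delta=-\delta$. It remains to verify the co-Jacobi identity \eqref{lia1} for $\delta$. For skew-symmetric $\delta$ this is equivalent to the cyclic form $(\id+\sigma+\sigma^2)(\delta\,\hat{\otimes}\,\id)\delta=0$, where $\sigma$ is the cyclic permutation of the three tensor factors, suitably completed. I would prove it in this cyclic form.

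The most economical route is dualization, reducing to Proposition~\ref{pro:L-Z-lie}. Each graded piece of $B$ is finite-dimensional, so the graded dual $B^{\circledast}=\oplus_i B_i^*$ carries a product $\circ$ dual to $\vartheta$. Pairing with $B^{\circledast}\otimes B^{\circledast}\otimes B^{\circledast}$ is well-defined on the completed triple tensor product, since each component lies in a finite-dimensional $B_i\otimes B_j\otimes B_k$. Under this pairing, Eq.~\eqref{lia1} for $\vartheta$ says exactly that $(B^{\circledast},\circ)$ is a (left) Leibniz algebra. One must check the index conventions: $\langle \vartheta(b),\xi\otimes\eta\rangle=\langle b,\xi\circ\eta\rangle$, and the term $(\hat{\tau}\,\hat{\otimes}\,\id)(\id\,\hat{\otimes}\,\vartheta)\vartheta$ corresponds to $\eta\circ(\xi\circ\zeta)$. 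Likewise $A^*$ with the dual product is a Zinbiel algebra, since the Zinbiel coalgebra axiom dualizes to $a_1\cdot(a_2\cdot a_3)=(a_1\cdot a_2)\cdot a_3+(a_2\cdot a_1)\cdot a_3$. Computing $\langle\delta(a\otimes b),(f_1\otimes\xi_1)\otimes(f_2\otimes\xi_2)\rangle$ gives
\[
\langle a, f_1\cdot f_2\rangle\langle b,\xi_1\circ\xi_2\rangle-\langle a,f_2\cdot f_1\rangle\langle b,\xi_2\circ\xi_1\rangle .
\]
So $\delta$ is the transpose of the bracket of Proposition~\ref{pro:L-Z-lie} on $A^*\otimes B^{\circledast}$. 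The Jacobi identity there then yields co-Jacobi for $\delta$, because the pairing of $(A\otimes B)^{\hat{\otimes}3}$ with $(A^*\otimes B^{\circledast})^{\otimes 3}$ separates points: a tensor in the product vanishes iff every graded component does, and each component is finite-dimensional.

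As a fallback independent of duality, I would expand $(\delta\,\hat{\otimes}\,\id)\delta(a\otimes b)$ directly. Write $\Delta(a)=\sum a_{(1)}\otimes a_{(2)}$ and $\vartheta(b)=\sum b_{(1)}\otimes b_{(2)}$; the expansion gives eight terms. The $A$-parts are of the forms $(\Delta\otimes\id)\Delta$ and $(\id\otimes\Delta)\Delta$ up to permutations, and likewise the $B$-parts for $\vartheta$. After cyclic summation, I would substitute the Zinbiel coalgebra identity in $A$ and the Leibniz coalgebra identity in $B$. The terms should then cancel in pairs, mirroring the standard operadic proof that Zinbiel $\otimes$ Leibniz is Lie.

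I expect the main obstacle to be bookkeeping of permutations rather than any conceptual difficulty. The delicate points are two. First, $\hat{\tau}$ must act simultaneously on the $A$- and $B$-factors through $\bullet$, so that $(\tau\otimes\hat\tau)$ on the $\bullet$-product matches $\hat\tau$ on $A\otimes B$. Second, the left/right conventions in the dualized identities must match those in Proposition~\ref{pro:L-Z-lie}. I would first fix these conventions carefully, recording $\hat{\tau}(X\bullet Y)=\tau(X)\bullet\hat{\tau}(Y)$ and $\sigma(X\bullet Y)=\sigma X\bullet\sigma Y$, and then run the duality argument.
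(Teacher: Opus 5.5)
Your duality route differs from the paper's and would be a cleaner proof, but as written it has a gap at its first step. You claim that the graded dual $B^{\circledast}=\bigoplus_k B_k^*$ is closed under the product transposed from $\vartheta$. The definition of a completed Leibniz coalgebra puts no grading condition on $\vartheta$. So for $\xi\in B_i^*$ and $\eta\in B_j^*$, the functional $b\mapsto\langle\vartheta(b),\xi\otimes\eta\rangle$ may be nonzero on infinitely many $B_k$.

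Here is an example. Take $B=\bigoplus_{k\in\bz}\Bbbk e_k$ with $e_k$ of degree $k$, and set $\vartheta(e_0)=\vartheta(e_1)=0$ and $\vartheta(e_k)=e_0\otimes e_1$ for $k\neq 0,1$. Every term of Eq.~\eqref{lia1} vanishes, so this is a completed Leibniz coalgebra. Yet $e_0^*\circ e_1^*$ takes the value $1$ on every $e_k$ with $k\neq 0,1$, so it lies in $\prod_k B_k^*$ but not in $\bigoplus_k B_k^*$. Hence the bracket $[g_2,g_3]$ you need in $\langle(\id\hat{\otimes}\delta)\delta(x),g_1\otimes g_2\otimes g_3\rangle=\langle x,[g_1,[g_2,g_3]]\rangle$ leaves $A^*\otimes B^{\circledast}$, and Proposition~\ref{pro:L-Z-lie} cannot be applied there. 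Passing to the full dual $\prod_k B_k^*$ does not help in general, because elements of $B\hat{\otimes}B$ do not pair with $B^*\otimes B^*$: the pairing becomes an infinite sum.

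There are two ways to repair this.

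\emph{Add a homogeneity hypothesis.} Suppose $\vartheta$ is homogeneous of some degree $d$, i.e.\ $\vartheta(B_k)\subseteq\prod_{i+j=k+d}B_i\otimes B_j$. Then $B_i^*\circ B_j^*\subseteq B_{i+j-d}^*$, so $B^{\circledast}$ is a $\bz$-graded Leibniz algebra and all sums in your transposition identities are finite. Your argument then goes through verbatim. This covers every coproduct the paper actually uses: the one of Lemma~\ref{lem:dual} has degree $-m$, and that of Example~\ref{ex:zin-coalg} has degree $0$.

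\emph{Use your fallback.} For the statement in its stated generality you need the direct computation, which is exactly the paper's proof. It proceeds as follows.
\begin{enumerate}
\item Expand $\big((\id\hat{\otimes}\delta)\delta-(\hat{\tau}\hat{\otimes}\id)(\id\hat{\otimes}\delta)\delta-(\delta\hat{\otimes}\id)\delta\big)(a\otimes b)$ into twelve $\bullet$-products.
\item Use $(\id\hat{\otimes}\vartheta)\vartheta-(\hat{\tau}\hat{\otimes}\id)(\id\hat{\otimes}\vartheta)\vartheta=(\vartheta\hat{\otimes}\id)\vartheta=-(\hat{\tau}\hat{\otimes}\id)(\vartheta\hat{\otimes}\id)\vartheta$ to rewrite every $B$-factor as one of the six permutations of $(\id\hat{\otimes}\vartheta)\vartheta(b)$.
\item Check that each of the six resulting $A$-coefficients vanishes by $(\Delta\otimes\id)\Delta+(\tau\otimes\id)(\Delta\otimes\id)\Delta=(\id\otimes\Delta)\Delta=(\tau\otimes\id)(\id\otimes\Delta)\Delta$.
\end{enumerate}
This computation is purely formal in $\vartheta$. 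It needs nothing beyond the well-definedness of the composites, which the paper already assumes. Your sketch of it (``the terms should then cancel in pairs'') still has to be carried out along these lines.
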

\begin{proof}
	Clearly, $\delta=-\hat{\tau}\,\delta$.
	For all $a \in A$ and $b \in B$, we have
	\begin{align*}
		&\;((\id\hat{\otimes}\delta)\delta-(\hat{\tau}\hat{\otimes}\id)(\id\hat{\otimes}\delta)\delta
		-(\delta\hat{\otimes}\id)\delta )(a\otimes b)\\
		=&\; (\id\otimes\Delta)\Delta(a) \bullet (\id\hat{\otimes}\vartheta)\vartheta(b)
		- (\id\otimes\tau)(\id\otimes\Delta)\Delta(a)\bullet
		(\id\hat{\otimes}\hat{\tau})(\id\hat{\otimes}\vartheta)\vartheta(b) \\[-1mm]
		&\quad-(\id\otimes\Delta)\tau\Delta(a) \bullet (\id\hat{\otimes}\vartheta)\hat{\tau}\vartheta(b) + (\id\otimes\tau)(\id\otimes\Delta)\tau\Delta(a) \bullet
		(\id\hat{\otimes}\hat{\tau})(\id\hat{\otimes}\vartheta)\hat{\tau}\vartheta(b) \\[-1mm]
		&\quad - (\tau\otimes\id)(\id\otimes\Delta)\Delta(a) \bullet
		 (\hat{\tau}\hat{\otimes}\id)(\id\hat{\otimes}\vartheta)\vartheta(b) \\[-1mm]
		&\quad+(\tau\otimes\id)(\id\otimes\tau)(\id\otimes\Delta)\Delta(a)\bullet
		(\hat{\tau}\hat{\otimes}\id)(\id\hat{\otimes}\hat{\tau})(\id\hat{\otimes}\vartheta)\vartheta(b) \\[-1mm]
		&\quad+  (\tau\otimes\id)(\id\otimes\Delta)\tau\Delta(a)\bullet (\hat{\tau}\hat{\otimes}\id)(\id\hat{\otimes}\vartheta)\hat{\tau}\vartheta(b)\\[-1mm]
		&\quad-(\tau\otimes\id)(\id\otimes\tau)(\id\otimes\Delta)\tau\Delta(a)\bullet
		(\hat{\tau}\hat{\otimes}\id)(\id\hat{\otimes}\hat{\tau})(\id\hat{\otimes}\vartheta)\hat{\tau}\vartheta(b)\\[-1mm]
		&\quad-(\Delta\otimes\id)\Delta(a) \bullet
		(\vartheta\hat{\otimes}\id)\vartheta(b)
		+(\tau\otimes\id)(\Delta\otimes\id)\Delta(a) \bullet
		(\hat{\tau}\hat{\otimes}\id)(\vartheta\hat{\otimes}\id)\vartheta(b)\\[-1mm]
		&\quad+(\Delta\otimes\id)\tau\Delta(a)\bullet (\vartheta\hat{\otimes}\id)\hat{\tau}\vartheta(b) -(\tau\otimes\id)(\Delta\otimes\id)\tau\Delta(a)\bullet
		(\hat{\tau}\hat{\otimes}\id)(\vartheta\hat{\otimes}\id)\hat{\tau}\vartheta(b).
	\end{align*}
	Note that
	\begin{align*}
		(\id\,\hat{\otimes}\,\vartheta)\vartheta - (\hat{\tau}\hat{\otimes}\id)(\id\,\hat{\otimes}\,\vartheta)\vartheta = (\vartheta\hat{\otimes}\id)\vartheta = -(\hat{\tau}\hat{\otimes} \id)(\vartheta\hat{\otimes}\id)\vartheta, \\
		(\Delta\otimes\id)\Delta+(\tau\otimes\id) (\Delta\otimes\id)\Delta = (\id\otimes\Delta)\Delta = (\tau\otimes\id)(\id\otimes\Delta)\Delta.
	\end{align*}
	Therefore, we have
	\begin{align*}
		&\;((\id\,\hat{\otimes}\,\delta)\delta
		-(\hat{\tau}\,\hat{\otimes}\,\id)(\id\,\hat{\otimes}\,\delta)\delta
		-(\delta\,\hat{\otimes}\,\id)\delta )(a\otimes b)\\[-1mm]
		=&\;\Big((\id\otimes\Delta)\Delta-(\Delta\otimes\id)\Delta - 
		(\tau\otimes\id)(\Delta\otimes\id)\Delta\Big)(a)
		\bullet(\id\hat{\otimes}\vartheta)\vartheta(b)\\[-1mm]
		&\quad+\Big((\Delta\otimes\id)\Delta+(\tau\otimes\id)(\Delta\otimes\id)\Delta
		-(\tau\otimes\id)(\id\otimes\Delta)\Delta\Big)(a)
		\bullet(\hat{\tau}\hat{\otimes}\id)(\id\hat{\otimes}\vartheta)\vartheta(b)\\[-1mm]
		&\quad+\Big((\tau\otimes\id)(\id\otimes\Delta)\tau\Delta
		+(\tau\otimes\id)(\id\otimes\tau)(\id\otimes\Delta)\tau\Delta
		-(\id\otimes\tau)(\id\otimes\Delta)\Delta\Big)(a)\\[-1mm]
		&\qquad\qquad\qquad\qquad\qquad\qquad \bullet(\id\hat{\otimes}\hat{\tau})(\id\hat{\otimes}\vartheta)
		\vartheta(b)\\[-1mm]
		&\quad+\Big((\Delta\otimes\id)\tau\Delta-(\tau\otimes\id)(\id\otimes\Delta)\tau\Delta
		-(\tau\otimes\id)(\id\otimes\tau)(\id\otimes\Delta)\tau\Delta\Big)(a)\\[-1mm]
		&\qquad\qquad\qquad\qquad\qquad\qquad \bullet(\id\hat{\otimes}\hat{\tau})(\hat{\tau}\hat{\otimes}\id)
		(\id\hat{\otimes}\hat{\vartheta})\vartheta(b)\\[-1mm]
		&\quad+\Big((\tau\otimes\id)(\id\otimes\tau)(\id\otimes\Delta)\Delta
		-(\id\otimes\Delta)\tau\Delta-(\id\otimes\tau)(\id\otimes\Delta)\tau\Delta\Big)(a)\\[-1mm]
		&\qquad\qquad\qquad\qquad\qquad\qquad \bullet(\hat{\tau}\hat{\otimes}\id)(\id\hat{\otimes}\hat{\tau})
		(\id\hat{\otimes}\vartheta)\vartheta(b)\\[-1mm]
		&\quad+\Big((\id\otimes\Delta)\tau\Delta+(\id\otimes\tau)(\id\otimes\Delta)\tau\Delta
		-(\tau\otimes\id)(\Delta\otimes\id)\tau\Delta\Big)(a)\\[-1mm]
		&\qquad\qquad\qquad\qquad\qquad\qquad \bullet(\id\hat{\otimes}\hat{\tau})(\hat{\tau}\hat{\otimes}\id)
		(\id\hat{\otimes}\hat{\tau})(\id\hat{\otimes}\vartheta)\vartheta(b)\\
		=&\; 0.
	\end{align*}
	Therefore, $(A\otimes B, \delta)$ is a
	completed Lie coalgebra. 
\end{proof}

%\smallskip
%%%%%%%%%%%%%%%%%%%%%%%%%%%%%%%%%%%%%%%%%%%%%%%%%%%%%%%%%%%%%%%%%%%%%%%%%%%%%%%%%
%    section  3  Leibniz Bialgebras
%%%%%%%%%%%%%%%%%%%%%%%%%%%%%%%%%%%%%%%%%%%%%%%%%%%%%%%%%%%%%%%%%%%%%%%%%%%%%%%%%%%%%%
\section{Lie bialgebras from Leibniz bialgebras} \label{sec:Leib-Liebia}
In this section, we construct a Lie bialgebra on the tensor product of a Leibniz bialgebra and a quadratic Zinbiel algebra, and show that this Lie bialgebra is quasi-triangular (resp. triangular, factorizable) whenever the Leibniz bialgebra is quasi-triangular (resp. triangular, factorizable).

%%%%%%%%%%%%%%%%%%%%%%%%%%%%%%%%%%%%%%%%%%%%%%%%%%%%%%%%%%%%%%%%%%%%%%%%%%%
\subsection{Lie bialgebra structures on the tensor products}\label{ssec:Li-Lib}

Let $\omega$ be a nondegenerate bilinear form on a vector space $V$. 
For all $l \geq 2$, the bilinear form $\widetilde{\omega}_l: \overbrace{(V \otimes \cdots \otimes V)}^{l-\text{fold}} \otimes \overbrace{(V \otimes \cdots \otimes V)}^{l-\text{fold}} \to \mathbf{k}$ defined by
\begin{equation*}
	\widetilde{\omega}_l(a_{1}\otimes a_{2}\otimes\cdots\otimes a_{l}, \, a'_{1}\otimes a'_{2} \otimes\cdots\otimes a'_{l})=\prod_{i=1}^{l}\omega(a_{i}, a'_{i})
\end{equation*}
is {\bf nondegenerate}. 
For brevity, we will suppress the index $l$ without ambiguity.

\begin{lem}\label{lem:qZ-dual}
	Let $(A, \cdot, \kappa)$ be a quadratic Zinbiel algebra. 
	Define a linear map $\Delta: A\rightarrow A \otimes A$ by
	\begin{equation}
		\widetilde{\kappa}(\Delta (a_{1}),\; a_{2}\otimes a_{3})=-\kappa(a_{1},\; a_{2}\cdot a_{3}), \;\; \forall a_{1}, a_{2}, a_{3}\in A. \label{eq:q2zc}
	\end{equation}
	Then $(A, \Delta)$ is a Zinbiel coalgebra.
\end{lem}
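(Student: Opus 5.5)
We prove the Zinbiel coalgebra identity by pairing both sides against an arbitrary element $a_2\otimes a_3\otimes a_4$ via the nondegenerate form $\widetilde{\kappa}$. This turns each of the three terms into an expression involving only the product and $\kappa$, and the Zinbiel identity (with the invariance properties) then closes the argument.

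First we record how $\widetilde{\kappa}$ interacts with iterated coproducts. By definition, $\widetilde{\kappa}(\Delta(a_1), a_2\otimes a_3)=-\kappa(a_1, a_2\cdot a_3)$. Write $\Delta(a_1)=\sum a_{(1)}\otimes a_{(2)}$. Applying the definition twice gives
\begin{align*}
\widetilde{\kappa}\big((\Delta\otimes\id)\Delta(a_1),\, a_2\otimes a_3\otimes a_4\big)
&=\sum -\kappa(a_{(1)}, a_2\cdot a_3)\,\kappa(a_{(2)}, a_4)\\
&=-\widetilde{\kappa}\big(\Delta(a_1),\, (a_2\cdot a_3)\otimes a_4\big)\\
&=\kappa\big(a_1, (a_2\cdot a_3)\cdot a_4\big).
\end{align*}
Similarly,
\[
\widetilde{\kappa}\big((\id\otimes\Delta)\Delta(a_1),\, a_2\otimes a_3\otimes a_4\big)=\kappa\big(a_1, a_2\cdot(a_3\cdot a_4)\big).
\]
For the flipped term,
\[
\widetilde{\kappa}\big((\tau\otimes\id)(\Delta\otimes\id)\Delta(a_1),\, a_2\otimes a_3\otimes a_4\big)
=\widetilde{\kappa}\big((\Delta\otimes\id)\Delta(a_1),\, a_3\otimes a_2\otimes a_4\big)
=\kappa\big(a_1, (a_3\cdot a_2)\cdot a_4\big).
\]
Here we use that $\widetilde{\kappa}$ is a product of pairings in matching slots, so $\widetilde{\kappa}((\tau\otimes \id)x, y)=\widetilde{\kappa}(x,(\tau\otimes\id)y)$. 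The one point needing care is sign bookkeeping in these identities. Since each component is paired in its own slot, the skew-symmetry of $\kappa$ is never invoked and no extra signs appear.

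Subtracting, the coalgebra identity paired with $a_2\otimes a_3\otimes a_4$ becomes
\[
\kappa\big(a_1,\; a_2\cdot(a_3\cdot a_4)-(a_2\cdot a_3)\cdot a_4-(a_3\cdot a_2)\cdot a_4\big)=0.
\]
This holds by the Zinbiel identity in Definition~\ref{def:zin-alg}. Since $\widetilde{\kappa}$ is nondegenerate on $A^{\otimes 3}$, it follows that $(\id\otimes\Delta)\Delta=(\Delta\otimes\id)\Delta+(\tau\otimes\id)(\Delta\otimes\id)\Delta$, so $(A,\Delta)$ is a Zinbiel coalgebra.

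Invariance of $\kappa$ is not actually needed: only nondegeneracy is used, because $\Delta$ is essentially the transpose of the product. Invariance will matter later for compatibility conditions.
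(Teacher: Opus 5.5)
Your proposal is correct and matches the paper's proof: pair $(\id\otimes\Delta)\Delta(a_1)-(\Delta\otimes\id)\Delta(a_1)-(\tau\otimes\id)(\Delta\otimes\id)\Delta(a_1)$ against an arbitrary $a_2\otimes a_3\otimes a_4$ via $\widetilde{\kappa}$, reduce it to $\kappa$ applied to the Zinbiel identity, and conclude by nondegeneracy. Your explicit sign bookkeeping fills in steps the paper leaves implicit. So does your remark that only nondegeneracy of $\kappa$ is used, not invariance or skew-symmetry.
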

\begin{proof}
	For all $a, a_{1}, a_{2}, a_{3}\in A$, we have
	\begin{align*}
	&\; \widetilde{\kappa}((\Delta\otimes\id)\Delta(a)
	+(\tau\otimes\id)(\Delta\otimes\id)\Delta(a) - (\id\otimes\Delta)\Delta(a),\; a_{1}\otimes a_{2}\otimes a_{3})\\
	=&\;\kappa\big(a,\; (a_{1}\cdot a_{2})\cdot a_{3}+(a_{2}\cdot a_{1})\cdot a_{3}- a_{1} \cdot (a_{2} \cdot a_{3})\big)
	=0.
	\end{align*}
	That is, $(\id\otimes\Delta)\Delta=(\Delta\otimes\id)
	\Delta+(\tau\otimes\id)(\Delta\otimes\id)\Delta$ and therefore $(A, \Delta)$ is a Zinbiel coalgebra.
\end{proof}

\begin{ex}\label{ex:qu-zib}
	Let $(A, \cdot)$ be a $4$-dimensional Zinbiel algebra with a basis $\{e_{1}, e_{2}, e_{3}, e_{4}\}$ whose non-zero product $\cdot$ is given by 
	\begin{equation*}
		e_{1}\cdot e_{1}=e_{2}, \;
		e_{4}\cdot e_{4}=e_{3}, \;
		e_{1}\cdot e_{4} =2e_{3}-e_{2}, \;
		e_{4}\cdot e_{1}=2e_{2}-e_{3}.
	\end{equation*}
	Define a bilinear form $\kappa$ on $(A, \cdot)$ by setting
	\begin{equation*}
		\kappa(e_{3}, e_{1}) = \kappa(e_{4}, e_{2}) = 1 = -\kappa(e_{2}, e_{4}) = -\kappa(e_{1}, e_{3}),
	\end{equation*}
	and all remaining basis pairings zero. 
	Then $(A, \cdot, \kappa)$ is a quadratic Zinbiel algebra.
	Furthermore, by Lemma~\ref{lem:qZ-dual}, we obtain a Zinbiel coalgebra $(A, \Delta)$, where $\Delta$ is given by
	\begin{equation*}
		\Delta(e_{1}) = e_{2}\otimes e_{2} + e_{2}\otimes e_{3} - 2e_{3}\otimes e_{2}, \; 
		\Delta(e_{2}) =\Delta(e_{3})=0, \;
		\Delta(e_{4}) = - e_{3}\otimes e_{3} + 2e_{2}\otimes e_{3} - e_{3}\otimes e_{2}.
	\end{equation*}
\end{ex}

The following notions and theorem present a construction of Lie bialgebras from a Leibniz bialgebra and a quadratic Zinbiel algebra, lifting Propositions~\ref{pro:L-Z-lie} and
\ref{pro:L-Z-colie} to the level of bialgebras.
\begin{defi}
	A {\bf Leibniz bialgebra} is a triple $(B, \circ, \vartheta)$,
	where $(B, \circ)$ is a Leibniz algebra, $(B, \vartheta)$ is a Leibniz coalgebra and the following equations hold:
	\begin{align*}
		&\vartheta(b_{1} \circ b_{2})=(\id\otimes\fr_{\circ}(b_{2})-(\fl_{\circ}+\fr_{\circ})(b_{2})\otimes\id)
		(\id+\tau) \vartheta(b_{1}) + (\id\otimes\fl_{\circ}(b_{1})
		+\fl_{\circ}(b_{1})\otimes\id)\vartheta(b_{2}), \\
		&\tau(\fr_{\circ}(b_{2})\otimes\id)\vartheta(b_{1})
		=(\fr_{\circ}(b_{1})\otimes\id)\vartheta(b_{2}), \;\; \forall b_{1}, b_{2}\in B.
	\end{align*}
\end{defi}

\begin{defi}\label{def:com-liebi}
	A {\bf completed Lie bialgebra} is a triple $(\g=\oplus_{i\in\bz}\g_{i}, [-,-], \delta)$ such that $(\g, [-, -])$ is a Lie algebra, $(\g, \delta)$ is a completed Lie coalgebra, and the following compatibility condition holds:
	\begin{equation}
		\delta([g_{1}, g_{2}])=\big(\ad(g_{1})\,\hat{\otimes}\,\id +\id\,\hat{\otimes}\,\ad(g_{1})\big)\delta(g_{2})-\big(\ad(g_{2})\,\hat{\otimes}\,\id
		+\id\,\hat{\otimes}\,\ad(g_{2})\big)\delta(g_{1}), \;\; \forall g_{1}, g_{2}\in\g.   \label{comlie}
	\end{equation}
	When $\g = \g_{0}$, it is simply called a {\bf Lie bialgebra}.
\end{defi}

\begin{thm}\label{thm:liebia-LZ}
	Let $(B, \circ, \vartheta)$ be a Leibniz bialgebra, $(A, \cdot, \kappa)$
	be a quadratic Zinbiel algebra and $(\g:=A \otimes B, [-,-])$ be the induced Lie
	algebra from $(A, \cdot)$ and $(B, \circ)$. 
	Let $\Delta: A \rightarrow A \otimes A$ be the linear map defined by Eq.~\eqref{eq:q2zc} and $\delta: \g \rightarrow \g \otimes \g$ defined by Eq.~\eqref{colie}.
	Then $(\g, [-,-], \delta)$ is a Lie bialgebra, which is called the
	{\bf Lie bialgebra induced from $(B, \circ, \vartheta)$ by $(A, \cdot, \kappa)$}.
\end{thm}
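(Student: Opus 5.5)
By Proposition~\ref{pro:L-Z-lie}, $(\g,[-,-])$ is a Lie algebra. By Lemma~\ref{lem:qZ-dual}, $(A,\Delta)$ is a Zinbiel coalgebra, and then Proposition~\ref{pro:L-Z-colie} (with $B=B_0$, so no completion is needed) shows that $(\g,\delta)$ is a Lie coalgebra. It therefore remains only to verify the compatibility condition \eqref{comlie} for $g_1=a_1\otimes b_1$ and $g_2=a_2\otimes b_2$.

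The plan is to rewrite both sides of \eqref{comlie} in the $\bullet$-notation, as sums of terms of the form (an element of $A\otimes A$ built from $\Delta$ and $\cdot$) $\bullet$ (an element of $B\otimes B$ built from $\vartheta$ and $\circ$).

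On the left-hand side, $\delta([g_1,g_2])$ equals
\[
(\id-\tau)\big(\Delta(a_1\cdot a_2)\bullet\vartheta(b_1\circ b_2)-\Delta(a_2\cdot a_1)\bullet\vartheta(b_2\circ b_1)\big).
\]
We expand $\vartheta(b_1\circ b_2)$ and $\vartheta(b_2\circ b_1)$ using the first Leibniz bialgebra axiom, which expresses each in terms of $\fl_\circ,\fr_\circ$ acting on $\vartheta(b_1)$ and $\vartheta(b_2)$. On the $A$-side we need the analogous identities for $\Delta(a_1\cdot a_2)$. Since $\Delta$ is the $\kappa$-dual of $\cdot$, we pair with $a_3\otimes a_4$ via $\widetilde{\kappa}$ and use the Zinbiel identity, left commutativity, invariance of $\kappa$, and the identity $\kappa(x\cdot y,z)=-\kappa(x,z\cdot y)$. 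This should yield a formula of the type
\[
\Delta(a_1\cdot a_2)=(\fl_\cdot(a_1)\otimes\id)\Delta(a_2)+(\text{terms in } (\fl_\cdot+\fr_\cdot)(a_1),\ \fr_\cdot(a_2) \text{ applied to } \Delta(a_1),\Delta(a_2),\tau\Delta(a_i)),
\]
that is, the Zinbiel-coalgebra analogue of the Leibniz compatibility, holding automatically for quadratic $A$. We will also need the Zinbiel-side counterpart of the second Leibniz axiom: a symmetry relation between $(\fr_\cdot(a_2)\otimes\id)\Delta(a_1)$ and $\tau(\fr_\cdot(a_1)\otimes\id)\Delta(a_2)$ (or their $\fl$ versions). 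This should follow from $\kappa$ being skew-symmetric and invariant. We will derive these auxiliary identities first and record them as a lemma.

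On the right-hand side, $\ad(a_1\otimes b_1)(x\otimes y)=a_1\cdot x\otimes b_1\circ y-x\cdot a_1\otimes y\circ b_1$. Hence $(\ad(g_1)\otimes\id+\id\otimes\ad(g_1))\delta(g_2)$ splits into terms of the form $(\fl_\cdot(a_1)\otimes\id)X\bullet(\fl_\circ(b_1)\otimes\id)Y$ and $(\fr_\cdot(a_1)\otimes\id)X\bullet(\fr_\circ(b_1)\otimes\id)Y$, where $X\in\{\Delta(a_2),\tau\Delta(a_2)\}$ and $Y\in\{\vartheta(b_2),\tau\vartheta(b_2)\}$; the same holds with $1\leftrightarrow 2$. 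Substituting the expansions into the left-hand side and grouping terms by the "shape" of the $B$-factor, the terms involving $\vartheta(b_2)$ with $\fl_\circ(b_1)$ should match the right-hand side directly. The terms involving $\vartheta(b_1)$ with $\fr_\circ(b_2)$ and $(\id+\tau)$ should cancel against the $-\ad(g_2)$ terms, once the two symmetry identities (the second Leibniz axiom and its Zinbiel counterpart) are used to move $\tau$ across.

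The main obstacle is this last bookkeeping step. Both sides contain many terms, and the cancellation needs the right form of the derived $\Delta(a_1\cdot a_2)$ identity, together with the Zinbiel symmetry identity, applied to mixed terms in which $A$- and $B$-flips appear on different factors. To keep this manageable, we would first try a duality argument rather than expanding symbolically. Pair every term with $\widetilde{\kappa}(\cdot,a_3\otimes a_4)$ in the $A$-component, which turns each $A$-expression into a scalar trilinear expression in $\kappa$ and products. The required equality then reduces to showing that, for fixed $a_1,\dots,a_4$, a certain linear combination of $\vartheta$-expressions in $B\otimes B$ vanishes. This in turn follows from the Leibniz bialgebra axioms once the scalar coefficients are simplified by the Zinbiel identity.
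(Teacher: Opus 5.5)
Your reduction to the compatibility condition \eqref{comlie} via Proposition~\ref{pro:L-Z-lie}, Lemma~\ref{lem:qZ-dual} and Proposition~\ref{pro:L-Z-colie} is exactly the paper's. The gap is in how you propose to verify \eqref{comlie}: it rests on a formula that does not exist. $\Delta(a_1\cdot a_2)$ cannot be written as a combination of $\fl_\cdot(a_i)$, $\fr_\cdot(a_i)$ (on either tensor factor) applied to $\Delta(a_j)$ and $\tau\Delta(a_j)$. What invariance and skew-symmetry of $\kappa$ actually give (these are the identities listed in the paper's proof) is
\begin{align*}
&(\fl_\cdot(a_1)\otimes\id)\Delta(a_2)=\Delta(a_1\cdot a_2)+\Delta(a_2\cdot a_1)=(\fl_\cdot(a_2)\otimes\id)\Delta(a_1),\\
&(\id\otimes\fl_\cdot(a_1))\Delta(a_2)=\Delta(a_1\cdot a_2)-\tau\Delta(a_2\cdot a_1),\qquad (\id\otimes\fr_\cdot(a_1))\Delta(a_2)=(\id+\tau)\Delta(a_2\cdot a_1),\\
&(\fr_\cdot(a_2)\otimes\id)\Delta(a_1)+\tau(\fr_\cdot(a_1)\otimes\id)\Delta(a_2)=-(\id+\tau)\Delta(a_1\cdot a_2+a_2\cdot a_1),
\end{align*}
together with their images under $a_1\leftrightarrow a_2$ and $\tau$. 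Apart from the tensors $(\fr_\cdot(a_i)\otimes\id)\Delta(a_j)$, these involve $\Delta(a_1\cdot a_2)$ only through $\Delta(a_1\cdot a_2)+\Delta(a_2\cdot a_1)$ and $(\id+\tau)\Delta(a_i\cdot a_j)$. So the skew part $(\id-\tau)\Delta(a_1\cdot a_2)$ is never reached. To see this, pair with $e\otimes f$ and write everything as $\kappa(a_1,w)$ with $w$ in the basis of left-normed Zinbiel words in $a_2,e,f$. Every operator image then contains $(e\cdot f)\cdot a_2$ and $(f\cdot e)\cdot a_2$ with equal coefficients, whereas $\Delta(a_1\cdot a_2)$ gives $(e\cdot f)\cdot a_2$ alone. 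This is consistent with the Zinbiel bialgebra axioms \eqref{zbialg1}--\eqref{zbialg2} constraining only such combinations. The last identity also shows that the $\fr_\cdot$-``symmetry'' you anticipate is not a pure symmetry. Hence the term-by-term substitution into $\delta([g_1,g_2])$, and the grouping by shape, cannot be carried out as you describe. Your pairing fallback is the right kind of idea, but as stated it leaves the whole computation undone. You would still need a normal form for the resulting $4$-linear scalars (which requires invariance of $\kappa$, not only the Zinbiel identity), and you would need to say which Leibniz axiom kills which coefficient.

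The paper runs the computation in the other direction. It rewrites every term on the right-hand side of \eqref{comlie} through six tensors: $\Delta(a_1\cdot a_2)$, $\Delta(a_2\cdot a_1)$, their flips, and $(\fr_\cdot(a_1)\otimes\id)\Delta(a_2)$ with its flip. This is always possible by the identities above. It then inspects the $B$-side coefficients. The first Leibniz axiom assembles them into $\vartheta(b_1\circ b_2)$ and $-\vartheta(b_2\circ b_1)$. The coefficient of $(\fr_\cdot(a_1)\otimes\id)\Delta(a_2)$ is $\tau(\fr_\circ(b_2)\otimes\id)\vartheta(b_1)-(\fr_\circ(b_1)\otimes\id)\vartheta(b_2)$, which vanishes by the second axiom. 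If you want to start from $\delta([g_1,g_2])$ instead, you need a $B$-side fact missing from your plan: the first Leibniz axiom implies $(\id+\tau)\vartheta(b_1\circ b_2+b_2\circ b_1)=0$. Combine this with $(\id-\tau)(X\bullet V)=\tfrac12\big((\id+\tau)X\bullet(\id-\tau)V+(\id-\tau)X\bullet(\id+\tau)V\big)$. Together they merge the unreachable parts $(\id-\tau)\Delta(a_1\cdot a_2)$ and $(\id-\tau)\Delta(a_2\cdot a_1)$ into the reachable $(\id-\tau)\Delta(a_1\cdot a_2+a_2\cdot a_1)$. Only then can the left-hand side be matched against the right.
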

\begin{proof}
	By Lemma \ref{lem:qZ-dual} and Proposition \ref{pro:L-Z-colie}, $(\g, \delta)$ is a Lie coalgebra. 
	Thus, it is sufficient to show that Eq.~\eqref{comlie} holds.
	For all $a, c\in A$ and $b, d\in B$, using the Sweedler notation, we have
	\begin{align*}
	&\;(\ad(a\otimes b)\otimes\id+\id\otimes\ad(a\otimes b))\delta(c\otimes d)
	-(\ad(c\otimes d)\otimes\id+\id\otimes\ad(c\otimes d))\delta(a\otimes b)\\
	=&\;(\id-\tau)\bigg(\sum_{(c)}\sum_{(d)}\Big(
	(a\cdot c_{(1)}\otimes c_{(2)})\bullet(b\circ d_{(1)}\otimes d_{(2)})
	-(c_{(1)}\cdot a\otimes c_{(2)})\bullet(d_{(1)}\circ b\otimes d_{(2)})\\[-4mm]
	&\qquad\qquad\qquad\qquad
	+(c_{(1)}\otimes a\cdot c_{(2)} )\bullet(d_{(1)}\otimes b\circ d_{(2)}) 
	-(c_{(1)}\otimes c_{(2)}\cdot a) \bullet(d_{(1)}\otimes d_{(2)}\circ b )\Big)\\
	&\quad-\sum_{(a)}\sum_{(b)}\Big(
	( c\cdot a_{(1)} \otimes a_{(2)})\bullet( d\circ b_{(1)} \otimes b_{(2)})
	-( a_{(1)}\cdot c \otimes a_{(2)})\bullet( b_{(1)}\circ d \otimes b_{(2)})\\[-4mm]
	&\qquad\qquad\quad
	+(a_{(1)}\otimes c\cdot a_{(2)} )\bullet(b_{(1)}\otimes d\circ b_{(2)}) 
	-(a_{(1)}\otimes a_{(2)}\cdot c) \bullet(b_{(1)}\otimes b_{(2)}\circ d )\Big)\bigg).
	\end{align*}
	For all $e, f\in A$, we have
	\begin{align*}
	&\;\widetilde{\kappa}(\sum_{(c)}c_{(1)}\otimes a\cdot c_{(2)}-\Delta(a\cdot c)
	+\tau\Delta(c \cdot a),\; e\otimes f) \\[-1mm]
	=&\; -\kappa(c,\; e\cdot(a\cdot f+f\cdot a)) - \kappa(c,\; a\cdot(e\cdot f) + (e\cdot f)\cdot a) - \kappa(c,\; (f\cdot e)\cdot a)\\
	=&\; 0.
	\end{align*}
	Since $\widetilde{\kappa}$ is nondegenerate, we obtain
	\begin{equation*}
		\sum_{(c)}c_{(1)}\otimes a\cdot c_{(2)}
		=\Delta(a\cdot c)-\tau\Delta(c\cdot a).
	\end{equation*}
	Similarly, we have 
	\begin{align*}
		&\sum_{(c)} a\cdot c_{(1)} \otimes c_{(2)}
		=\Delta(a\cdot c)+\Delta(c\cdot a)
		=\sum_{(a)} c\cdot a_{(1)} \otimes a_{(2)}, \\
		&\sum_{(c)}c_{(1)}\otimes c_{(2)}\cdot a = \Delta(c\cdot a)
		+\tau \Delta(c\cdot a), \\
		&\sum_{(a)}a_{(1)}\otimes a_{(2)}\cdot c
		=\Delta(a\cdot c)+\tau \Delta(a\cdot c), \\
		&\sum_{(a)}a_{(1)}\otimes c\cdot a_{(2)} =\Delta(c\cdot a) - \tau\Delta(a\cdot c), \\
		&\sum_{(a)} a_{(1)}\cdot c \otimes a_{(2)} = - \Delta(a\cdot c) - \Delta(c\cdot a) - \tau (\Delta(a\cdot c) + \Delta(c\cdot a) + \sum_{(c)} c_{(1)} \cdot a \otimes c_{(2)}).
	\end{align*}
	Furthermore, noting that $(B, \circ, \vartheta)$ is a Leibniz bialgebra, we have
	\begin{align*}
	&\;(\ad(a\otimes b)\otimes\id+\id\otimes\ad(a\otimes b))\delta(c\otimes d) - (\ad(c\otimes d)\otimes\id+\id\otimes\ad(c\otimes d))\delta(a\otimes b)\\
	=&\;(\id-\tau)\bigg(-\tau\Delta(a\cdot c)\bullet \Big(\sum_{(b)} b_{(1)} \circ d \otimes b_{(2)}-b_{(1)}\otimes d \circ b_{(2)} 
	-b_{(1)}\otimes b_{(2)}  \circ d \Big)\\[-2mm]
	&\quad+\Delta(a\cdot c)\bullet\Big(\sum_{(d)}\big( b \circ d_{(1)} \otimes d_{(2)}
	+d_{(1)}\otimes b \circ d_{(2)} \big)\\[-4mm]
	&\qquad\qquad\qquad\qquad-\sum_{(b)}\big( d \circ b_{(1)} \otimes b_{(2)}
	- b_{(1)} \circ d \otimes b_{(2)}+b_{(1)}\otimes b_{(2)} \circ d \big)\Big)\\[-2mm]
	&\quad-\tau\Delta(c\cdot a)\bullet\Big(\sum_{(d)}\big(
	d_{(1)}\otimes d_{(2)} \circ b +d_{(1)}\otimes b \circ d_{(2)} \big)
	+\sum_{(b)} b_{(1)} \circ d \otimes b_{(2)}\Big)\\[-2mm]
	&\quad+\Delta(c\cdot a)\bullet\Big(\sum_{(d)}\big( b \circ d_{(1)} \otimes d_{(2)}
	-d_{(1)}\otimes d_{(2)} \circ b \big)\\[-4mm]
	&\qquad\qquad\qquad\qquad-\sum_{(b)}\big( d \circ b_{(1)} \otimes b_{(2)}
	- b_{(1)} \circ d \otimes b_{(2)}-b_{(1)}\otimes d \circ b_{(2)} \big)\Big)\\[-2mm]
	&\quad-\Big(\sum_{(c)} c_{(1)}\cdot a \otimes c_{(2)}\Big)\bullet
	\Big(\sum_{(d)} d_{(1)} \circ b \otimes d_{(2)}\Big)
	-\Big(\sum_{(c)}c_{(2)}\otimes c_{(1)}\cdot a \Big)\bullet
	\Big(\sum_{(b)}b_{(1)} \circ d\otimes b_{(2)}\bigg)\\[-2mm]
	%%%%%%%%%%%%%%%%%%%%%%%%%%%%%%
	=&\;(\id-\tau)\bigg(\Delta(a\cdot c)\bullet
	\Big(\sum_{(b)}b_{(2)}\otimes b_{(1)} \circ d - d \circ b_{(2)} \otimes b_{(1)}
	- b_{(2)} \circ d \otimes b_{(1)}\Big)\\[-2mm]
	&\quad+\Delta(a\cdot c)\bullet\Big(\sum_{(d)}\big( b \circ d_{(1)} \otimes d_{(2)}
	+d_{(1)}\otimes b \circ d_{(2)} \big)\\[-4mm]
	&\qquad\qquad\qquad\qquad-\sum_{(b)}\big( d \circ b_{(1)} \otimes b_{(2)}
	- b_{(1)} \circ d \otimes b_{(2)}+b_{(1)}\otimes b_{(2)} \circ d \big)\Big)\\[-2mm]
	&\quad+\Delta(c\cdot a)\bullet\Big(\sum_{(d)}\big(
	 d_{(2)} \circ b \otimes d_{(1)}+ b \circ d_{(2)} \otimes d_{(1)}\big)
	+\sum_{(b)}b_{(2)}\otimes b_{(1)} \circ d\Big)\\[-2mm]
	&\quad+\Delta(c\cdot a)\bullet\Big(\sum_{(d)}\big(b \circ d_{(1)}\otimes d_{(2)}
	-d_{(1)}\otimes d_{(2)} \circ b \big)\\[-4mm]
	&\qquad\qquad\qquad\qquad-\sum_{(b)}\big(d \circ b_{(1)}\otimes b_{(2)}
	-b_{(1)} \circ d \otimes b_{(2)}-b_{(1)}\otimes d \circ b_{(2)} \big)\Big)\\[-2mm]
	&\quad+\Big(\sum_{(c)} c_{(1)}\cdot a \otimes c_{(2)}\Big)\bullet
	\Big(\sum_{(b)}b_{(2)}\otimes b_{(1)} \circ d 
	-\sum_{(d)} d_{(1)} \circ b \otimes d_{(2)}\Big)\bigg)\\[-2mm]
	=&\; (\id - \tau)\Big(\Delta(a\cdot c)\bullet\vartheta(b \circ d)
	-\Delta(c\cdot a)\bullet\vartheta(d \circ b)\Big) \\
	=&\; \delta([a\otimes b,\; c\otimes d]),
	\end{align*}
	Therefore, $(\g, [-,-], \delta)$ is a Lie bialgebra.
\end{proof}

%%%%%%%%%%%%%%%%%%%%%%%%%%%%%%%%%%%%%%%%%%%%%%%%%%%%%%%%%%%%%%%%%%%%%%%%%%%%%%%%%%%%%
\subsection{Manin triples of Lie algebras from Manin triples of Leibniz algebras}\label{ssec:manin}
Note that a Lie bialgebra is characterized by a Manin triple of Lie algebras. 
Hence we give the correspondence between Leibniz bialgebras and Lie bialgebras in terms of Manin triples as follows.

Recall that a bilinear form $\mathcal{B}$ on a Lie
algebra $(\g, [-,-])$ is called {\bf invariant} if
\begin{equation*}
	\mathcal{B}([g_{1}, g_{2}], g_{3})=\mathcal{B}(g_{1}, [g_{2}, g_{3}]), \;\; \forall g_{1}, g_{2}, g_{3}\in\g.
\end{equation*}

\begin{defi}\cite{CP}\label{def:manin-Lie}
	Let $(\g, [-,-])$, $(\g_{1}, [-,-]_{1})$ and $(\g_{2}, [-,-]_{2})$ be three Lie algebras.
	If there is a nondegenerate invariant symmetric bilinear form $\mathcal{B}$ on $(\g, [-,-])$ such that
	\begin{itemize}
		\item[$(i)$] 
		$(\g_{1}, [-,-]_{1})$ and $(\g_{2}, [-,-]_{2})$ are Lie subalgebras of $(\g, [-,-])$ and $\g=\g_{1}\oplus\g_{2}$ as a direct sum of vector spaces;
		
		\item[$(ii)$] 
		$(\g_{1}, [-,-]_{1})$ and $(\g_{2}, [-,-]_{2})$ are isotropic with respect to $\mathcal{B}$, that is, $\mathcal{B}(\g_{i}, \g_{i})=0$ for $i=1, 2$,
	\end{itemize}
	then the triple $(\g, \g_{1}, \g_{2})$ is called a {\bf Manin triple of Lie algebras} associated with $\mathcal{B}$.
	Two Manin triples $(\g, \g_{1}, \g_{2})$ and $(\g', \g'_{1}, \g'_{2})$ associated with $\mathfrak{B}$ and $\mathfrak{B}^\prime$ respectively are called {\bf isomorphic} if there exists an isomorphism $\varphi: \g\rightarrow\g'$ of Lie algebras such that $\varphi(\g_{i})=\g'_{i}$ for $i=1,2$ and $\mathcal{B}'(\varphi(g_{1}), \varphi(g_{2}))=\mathcal{B}(g_{1}, g_{2})$ for all $g_{1}, g_{2}\in\g$.
\end{defi}

\begin{pro}[\cite{CP}]\label{pro:man-Lie}
	Consider a Lie algebra $(\g, [-, -])$ and a Lie coalgebra $(\g, \delta)$, such that the linear dual $\delta^*: \g^* \otimes \g^* \to \g^*$ defines a Lie algebra $(\g^*,\{-, -\})$.
	Then the triple $(L, [-, -], \delta)$ is a Lie bialgebra if and only if $(L \oplus L^*, L, L^*)$ is a Manin triple of Lie algebras associated with the bilinear form defined by
	\begin{equation}\label{eq:smp}
		\mathfrak{B}_d((g_{1}, \xi_{1}),\; (g_{2}, \xi_{2})) = \langle\xi_{1}, g_{2} \rangle + \langle\xi_{2}, g_{1}\rangle, \;\; \forall g_{1}, g_{2}\in\g, \xi_{1}, \xi_{2}\in\g^{\ast}.
	\end{equation}
\end{pro}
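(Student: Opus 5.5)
The plan is the classical Drinfeld double argument, carried out explicitly on $\g\oplus\g^*$. Write $\{-,-\}=\delta^*$ for the bracket on $\g^*$, so that $\langle \{\xi_1,\xi_2\}, g\rangle=\langle \xi_1\otimes\xi_2,\delta(g)\rangle$. The form $\mathfrak{B}_d$ in Eq.~\eqref{eq:smp} is clearly symmetric and nondegenerate, and $\g$ and $\g^*$ are isotropic for it.

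Invariance forces the bracket on $\g\oplus\g^*$. If $\g$ and $\g^*$ are to be subalgebras, the only candidate is
\begin{equation*}
[(g_1,\xi_1),(g_2,\xi_2)]=\big([g_1,g_2]+\ad^*_{\{\}}(\xi_1)g_2-\ad^*_{\{\}}(\xi_2)g_1,\ \{\xi_1,\xi_2\}+\ad^*(g_1)\xi_2-\ad^*(g_2)\xi_1\big).
\end{equation*}
Here $\langle\ad^*(g)\xi,h\rangle=-\langle\xi,[g,h]\rangle$, and $\ad^*_{\{\}}$ is defined analogously using $\{-,-\}$.

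First I check that any Lie bracket making the triple a Manin triple must have this form. By invariance, the $\g^*$-component of $[g,\xi]$ paired with $h\in\g$ equals $\mathfrak{B}_d([g,\xi],h)=-\mathfrak{B}_d(\xi,[g,h])$. Likewise, the $\g$-component is determined by pairing with $\eta\in\g^*$. So the Manin triple condition is equivalent to this bracket being a Lie bracket, and a direct check shows that this bracket is automatically invariant with respect to $\mathfrak{B}_d$.

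Next comes the Jacobi identity. It holds on triples from $\g$ and on triples from $\g^*$, because both are Lie algebras. By skew-symmetry and the $\g\leftrightarrow\g^*$ symmetry of the setup, it remains to treat the mixed case $(g_1,g_2,\xi)$. I pair the Jacobiator with an arbitrary $\eta\in\g^*$ and with an arbitrary $h\in\g$, using invariance to move brackets across. The $\g^*$-component, paired with $h$, reduces to the Jacobi identity in $\g$ and gives nothing new. The $\g$-component, paired with $\eta$, becomes
\begin{equation*}
\big\langle \xi\otimes\eta,\ \delta([g_1,g_2])-(\ad(g_1)\otimes\id+\id\otimes\ad(g_1))\delta(g_2)+(\ad(g_2)\otimes\id+\id\otimes\ad(g_2))\delta(g_1)\big\rangle.
\end{equation*}
This vanishes for all $\xi,\eta$ exactly when the cocycle condition \eqref{comlie} holds. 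The case $(\xi_1,\xi_2,g)$ is the dual computation and yields the same condition after transposing. This gives both directions of the equivalence.

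The main obstacle is the sign bookkeeping in the mixed Jacobi computation. I have to make sure the six terms produced by expanding the nested brackets collapse precisely to the cocycle expression above, with the $\ad^*$ conventions chosen consistently. I would carry out this computation first for $(g_1,g_2,\xi)$ only, and obtain the remaining case by the symmetry interchanging the roles of $(\g,[-,-])$ and $(\g^*,\{-,-\})$; this symmetry is valid because the cocycle condition is self-dual (Lie bialgebras dualize).
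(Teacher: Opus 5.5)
Your argument is correct and is the standard Drinfeld-double proof of this result; the paper gives no proof of its own and simply quotes it from \cite{CP}. One point to tighten: for the case $(\xi_1,\xi_2,g)$, do not appeal to ``Lie bialgebras dualize'', which is circular if that fact is itself obtained from the Manin triple correspondence. Instead, note that symmetry and invariance of $\mathfrak{B}_d$ make $\mathfrak{B}_d(J(x,y,z),w)$ alternating in all four arguments, where $J$ denotes the Jacobiator; hence $\mathfrak{B}_d(J(\xi_1,\xi_2,g),h)=\mathfrak{B}_d(J(g,h,\xi_1),\xi_2)$, and your single computation for $(g_1,g_2,\xi)$ already covers every case.
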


\begin{defi}[\cite{TS}]\label{def:manin-Leib}
	Let $(B, \circ)$, $(B_{1}, \circ_{1})$ and $(B_{2}, \circ_{2})$ be three Leibniz algebras.
	If $(B, \circ, \omega)$ is a quadratic Zinbiel algebra and
	\begin{itemize}
		\item[$(i)$] 
			$(B_{1}, \circ_{1})$ and $(B_{2}, \circ_{2})$ are Leibniz subalgebras of $(B, \circ)$ and $B=B_{1}\oplus B_{2}$ as a direct sum of vector spaces;
		
		\item[$(ii)$] 
		$(B_{1}, \circ_{1})$ and $(B_{2}, \circ_{2})$ are isotropic with respect to $\omega$,
	\end{itemize}
	then the triple $(B, B_{1}, B_{2})$ is called a {\bf Manin triple of Leibniz algebras} associated with $\omega$.
\end{defi}

\begin{pro}[\cite{TS}]\label{pro:man-Leib}
Let $(B, \circ)$ be a Leibniz algebra and $(B, \vartheta)$ be a Leibniz coalgebra.
Then the triple $(B, \circ, \vartheta)$ is a Leibniz bialgebra if and only if $(B\oplus
B^{\ast}, B, B^{\ast})$ is a Manin triple of Leibniz algebras associated with the
bilinear form $\omega_{d}$ on $B \oplus B^{\ast}$ defined by 
\begin{equation}
	\omega_{d}((b_{1},
	\eta_{1}),\; (b_{2}, \eta_{2}))=\langle\eta_{1}, b_{2}\rangle-\langle\eta_{2}, b_{1}\rangle, \;\; \forall b_1, b_2 \in B, \eta_{1}, \eta_{2} \in B^{\ast}. \label{eq:bleib}
\end{equation}
\end{pro}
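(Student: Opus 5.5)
The statement to prove is Proposition~\ref{pro:man-Leib}, cited from \cite{TS}: $(B,\circ,\vartheta)$ is a Leibniz bialgebra if and only if $(B\oplus B^*,B,B^*)$ is a Manin triple of Leibniz algebras with respect to $\omega_d$. The plan follows the standard Lie bialgebra/Manin triple argument. Let $\star=\vartheta^*:B^*\otimes B^*\to B^*$. Since $(B,\vartheta)$ is a Leibniz coalgebra, $(B^*,\star)$ is a Leibniz algebra. The coassociativity-type identity \eqref{lia1} dualizes exactly to the Leibniz identity.

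The first step is to note that the form $\omega_d$ forces a unique candidate product on $B\oplus B^*$. The form is clearly nondegenerate and skew-symmetric, and $B$ and $B^*$ are isotropic. Suppose a Leibniz product restricts to $\circ$ and $\star$ and makes $\omega_d$ invariant. Using invariance together with its consequence $\omega(x\circ y,z)=-\omega(y,x\circ z)$, the mixed products are determined. For $b\circ\eta$ and $\eta\circ b$, the $B^*$-component is fixed by pairing against $B$, using coregular-type operators $-\fl_\circ^*$ and $\fl_\circ^*+\fr_\circ^*$. The $B$-component is fixed by pairing against $B^*$, using the analogous operators built from $\star$.

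Thus the candidate is
\begin{equation*}
(b_1+\eta_1)\circ(b_2+\eta_2)=b_1\circ b_2+\mathfrak{l}_1(\eta_1)b_2+\mathfrak{r}_1(\eta_2)b_1+\eta_1\star\eta_2+\mathfrak{l}_2(b_1)\eta_2+\mathfrak{r}_2(b_2)\eta_1,
\end{equation*}
where $\mathfrak{l}_1,\mathfrak{r}_1,\mathfrak{l}_2,\mathfrak{r}_2$ are the coregular-type maps just described. This is a matched-pair construction, and a direct check shows that $\omega_d$ is invariant for this product. Consequently, ``Manin triple'' is equivalent to ``this bracket satisfies the Leibniz identity.''

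The second step is to expand the Leibniz identity on $B\oplus B^*$ by type of arguments. All arguments in $B$ gives the Leibniz identity of $\circ$, which holds. All arguments in $B^*$ gives that of $\star$, which also holds. The mixed cases, with two entries from $B$ and one from $B^*$, or conversely, reduce to matched-pair conditions. The coregular representations being representations takes care of part of them.

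To handle the rest, pair each mixed identity with a suitable element of $B$ or $B^*$ via $\omega_d$. Invariance lets us move everything so that we obtain an equation of the form $\langle\vartheta(\cdot),\eta_1\otimes\eta_2\rangle=\cdots$, valid for all $\eta_1,\eta_2$. By nondegeneracy these become identities in $B\otimes B$. We expect them to reduce to exactly the two defining equations of a Leibniz bialgebra: the first compatibility for $\vartheta(b_1\circ b_2)$, and the second, $\tau(\fr_\circ(b_2)\otimes\id)\vartheta(b_1)=(\fr_\circ(b_1)\otimes\id)\vartheta(b_2)$. The converse direction follows by reading the same equivalences backward.

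The main obstacle is the bookkeeping in the mixed cases. There are six orderings of two $B$'s and one $B^*$, and six of the reverse type. Showing that all of them collapse to just the two bialgebra equations requires care. We would first use invariance of $\omega_d$ to argue that the Leibniz identity with arguments $(x,y,z)$, paired with $w$, is equivalent to some other ordering paired with a different element. This reduces the twelve cases to two independent ones, which are then dualized into the two compatibility conditions.
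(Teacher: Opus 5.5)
Your plan is the standard Manin-triple/matched-pair argument of \cite{TS}, which the paper cites without giving a proof of its own, and its claims check out: the product on $B\oplus B^{\ast}$ is forced by invariance, $\omega_d$ is invariant for that product, and the mixed Leibniz identities do collapse to the two compatibility equations. For the bookkeeping step, put $J(x,y,z)=x\circ(y\circ z)-(x\circ y)\circ z-y\circ(x\circ z)$; invariance plus skew-symmetry alone give $\omega_d(J(x,y,z),w)=\omega_d(J(x,y,w),z)$ and $\omega_d(J(x,y,z),w)+\omega_d(J(x,z,y),w)+\omega_d(J(x,w,y),z)=0$, so apart from the coregular representation axioms only the $B$-components of $J(b_1,\eta,b_2)$ and $J(\eta,b_1,b_2)$ remain (note the second identity is a three-term relation, not a pairwise equivalence, so pairwise identifications alone would leave four cases rather than two), and the former dualizes exactly to the equation for $\vartheta(b_1\circ b_2)$, while the latter, given the former, is equivalent to $\tau(\fr_{\circ}(b_2)\otimes\id)\vartheta(b_1)=(\fr_{\circ}(b_1)\otimes\id)\vartheta(b_2)$.
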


\begin{pro}\label{pro:bilin}
	Let $(B, \circ)$ be a Leibniz algebra, $(A, \cdot, \kappa)$ be a quadratic Zinbiel algebra and $(A\otimes B, [-,-])$ be the induced Lie algebra. 
	Suppose that $\omega: B\otimes B\rightarrow B$ is a nondegenerate invariant skew-symmetric bilinear form on $(B, \circ)$.  
	Define a bilinear form $\mathcal{B}$ on $A \otimes B$ by
	\begin{equation*}
		\mathcal{B}(a_{1}\otimes b_{1},\; a_{2}\otimes b_{2})
		=\kappa(a_{1},a_{2})\omega(b_{1}, b_{2}), \;\; \forall a_{1}, a_{2}\in A, \; b_{1}, b_{2}\in B.
	\end{equation*}
	Then $\mathcal{B}$ is a nondegenerate invariant
	symmetric bilinear form on $(A\otimes B, [-,-])$.
\end{pro}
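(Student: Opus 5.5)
Three properties have to be checked: symmetry, nondegeneracy and invariance. Symmetry and nondegeneracy are immediate. Since $\kappa$ and $\omega$ are both skew-symmetric,
\[
\mathcal{B}(a_2\otimes b_2,\, a_1\otimes b_1)=\kappa(a_2,a_1)\omega(b_2,b_1)=(-\kappa(a_1,a_2))(-\omega(b_1,b_2))=\mathcal{B}(a_1\otimes b_1,\, a_2\otimes b_2).
\]
Nondegeneracy follows as for $\widetilde{\omega}$: $\mathcal{B}$ is the tensor product of two nondegenerate forms. Concretely, take bases $\{e_i\}$ of $A$ and $\{f_j\}$ of $B$. The Gram matrix of $\mathcal{B}$ in the basis $\{e_i\otimes f_j\}$ is the Kronecker product of the Gram matrices of $\kappa$ and $\omega$, so it is invertible.

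For invariance it suffices to work with pure tensors $g_k=a_k\otimes b_k$, $k=1,2,3$. I will show $\mathcal{B}([g_1,g_2],g_3)=\mathcal{B}(g_1,[g_2,g_3])$ by expanding both sides with Proposition~\ref{pro:L-Z-lie}. The left side is
\[
\kappa(a_1\cdot a_2,a_3)\,\omega(b_1\circ b_2,b_3)-\kappa(a_2\cdot a_1,a_3)\,\omega(b_2\circ b_1,b_3).
\]
The right side is
\[
\kappa(a_1,a_2\cdot a_3)\,\omega(b_1,b_2\circ b_3)-\kappa(a_1,a_3\cdot a_2)\,\omega(b_1,b_3\circ b_2).
\]

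The plan is to move every product onto a single common slot pattern on each factor, and then compare coefficients. The tools are the following.
\begin{itemize}
\item Zinbiel invariance: $\kappa(x\cdot y,z)=\kappa(y,\,x\cdot z+z\cdot x)$.
\item Its consequence $\kappa(x\cdot y,z)=-\kappa(x,\,z\cdot y)$.
\item Leibniz invariance: $\omega(x\circ y,z)=\omega(x,\,y\circ z+z\circ y)$.
\item Its consequence $\omega(x\circ y,z)=-\omega(y,\,x\circ z)$.
\item Skew-symmetry of both forms.
\end{itemize}

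On the left, rewrite the factors as follows:
\begin{itemize}
\item $\kappa(a_1\cdot a_2,a_3)=-\kappa(a_1,a_3\cdot a_2)$;
\item $\omega(b_1\circ b_2,b_3)=\omega(b_1,\,b_2\circ b_3+b_3\circ b_2)$;
\item $\omega(b_2\circ b_1,b_3)=-\omega(b_1,\,b_2\circ b_3)$.
\end{itemize}
The remaining factor $\kappa(a_2\cdot a_1,a_3)$ should be expressed through $\kappa(a_1,a_2\cdot a_3)$ and $\kappa(a_1,a_3\cdot a_2)$. Zinbiel invariance gives
\[
\kappa(a_2\cdot a_1,a_3)=\kappa(a_1,\,a_2\cdot a_3+a_3\cdot a_2).
\]
Substituting all four rewrites, the left side becomes
\[
-\kappa(a_1,a_3\cdot a_2)\,\omega(b_1,\,b_2\circ b_3+b_3\circ b_2)+\kappa(a_1,\,a_2\cdot a_3+a_3\cdot a_2)\,\omega(b_1,b_2\circ b_3).
\]
Expanding, the two cross terms $\mp\kappa(a_1,a_3\cdot a_2)\,\omega(b_1,b_2\circ b_3)$ cancel. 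What remains is
\[
\kappa(a_1,a_2\cdot a_3)\,\omega(b_1,b_2\circ b_3)-\kappa(a_1,a_3\cdot a_2)\,\omega(b_1,b_3\circ b_2),
\]
which is exactly the right side.

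The only real obstacle is bookkeeping: picking, for each of the four factors, the right form of invariance so that the cross terms cancel. Signs from skew-symmetry are the likely source of error, so each rewrite should be double-checked against the two derived identities stated in the paper.
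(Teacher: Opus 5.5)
Your proposal is correct: symmetry and nondegeneracy follow from the skew-symmetry of $\kappa$ and $\omega$ and the Kronecker-product structure of the Gram matrix. For invariance, each of your four rewrites is a valid instance of the Zinbiel or Leibniz invariance identities or their stated consequences; the cross terms cancel and you land exactly on $\mathcal{B}(g_1,[g_2,g_3])$. This is precisely the ``straightforward check'' the paper leaves to the reader, so your argument is the paper's approach with the details written out.
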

\begin{proof}
	It is a straightforward check.
\end{proof}

\begin{lem}\label{lem:man1}
	Let $(A, \cdot, \kappa)$ be a quadratic Zinbiel algebra. 
	Let $(B, \circ)$ and $(B^{\ast}, \circ')$ be Leibniz algebras, and $(A \otimes B, [-, -]_{A \otimes B})$ and $(A \otimes B^{\ast}, [-, -]_{A \otimes B^{\ast}})$ be the induced Lie algebras from $(A, \cdot)$ and $(B, \circ)$ as well as  $(A, \cdot)$ and $(B, \circ')$ respectively.
	If $(B\oplus B^{\ast}, B, B^{\ast})$ is a Manin triple of Leibniz algebras associated with $\omega_{d}$ defined by Eq.~\eqref{eq:bleib}, 
	then $((A\otimes B)\oplus(A\otimes B^{\ast}),\; A\otimes B,\; A\otimes B^{\ast})$ is a Manin triple of Lie algebras associated with the bilinear form defined by
	\begin{equation}
		\mathcal{B}((a_{1}\otimes b_{1}, a'_{1}\otimes\xi_{1}),\; (a_{2}\otimes b_{2}, a'_{2}\otimes\xi_{2}))=-\kappa(a_{1}, a'_{2})\langle\xi_{2}, b_{1}\rangle+\kappa(a'_{1}, a_{2}) \langle\xi_{1}, b_{2}\rangle,        \label{Mani1}
	\end{equation}
for all $a_{1}, a_{2}, a'_{1}, a'_{2}\in A$, $b_{1}, b_{2}\in B$ and $\xi_{1},
\xi_{2}\in B^{\ast}$.
\end{lem}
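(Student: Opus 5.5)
The plan is to identify $(A\otimes B)\oplus(A\otimes B^{\ast})$ with $A\otimes(B\oplus B^{\ast})$ and then apply Proposition~\ref{pro:L-Z-lie} and Proposition~\ref{pro:bilin} to the Leibniz algebra $(B\oplus B^{\ast},\circ_d)$ that underlies the given Manin triple. By hypothesis, $(B\oplus B^{\ast},\circ_d,\omega_d)$ is a quadratic Leibniz algebra: $\omega_d$ is nondegenerate, skew-symmetric and invariant, and $B$, $B^{\ast}$ are isotropic Leibniz subalgebras. Proposition~\ref{pro:L-Z-lie} makes $\g_d:=A\otimes(B\oplus B^{\ast})$ a Lie algebra. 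The linear isomorphism
\[
A\otimes(B\oplus B^{\ast})\cong (A\otimes B)\oplus(A\otimes B^{\ast})
\]
then transports this bracket to the direct sum.

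First, subalgebras. For $a_1,a_2\in A$ and $b_1,b_2\in B$ we have
\[
[a_1\otimes b_1,a_2\otimes b_2]=a_1\cdot a_2\otimes b_1\circ_d b_2-a_2\cdot a_1\otimes b_2\circ_d b_1 .
\]
Since $B$ is a subalgebra with $\circ_d|_B=\circ$, this bracket lies in $A\otimes B$ and coincides with $[-,-]_{A\otimes B}$. The same argument applies to $A\otimes B^{\ast}$ with $\circ'$. So both summands are Lie subalgebras, and they are complementary as vector spaces.

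Next, the form. Proposition~\ref{pro:bilin}, applied to $\omega_d$, gives the nondegenerate invariant symmetric form $\mathcal{B}(a_1\otimes x_1,a_2\otimes x_2)=\kappa(a_1,a_2)\,\omega_d(x_1,x_2)$ on $\g_d$. I will expand it on $x_i=b_i+\xi_i$, writing $a_i$ for the $B$-components and $a'_i$ for the $B^{\ast}$-components. Bilinearity gives the cross terms
\[
\kappa(a_1,a'_2)\,\omega_d(b_1,\xi_2)+\kappa(a'_1,a_2)\,\omega_d(\xi_1,b_2)
=-\kappa(a_1,a'_2)\langle\xi_2,b_1\rangle+\kappa(a'_1,a_2)\langle\xi_1,b_2\rangle,
\]
and the $B$–$B$ and $B^{\ast}$–$B^{\ast}$ terms vanish because $\omega_d$ vanishes on $B$ and on $B^{\ast}$. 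This is exactly Eq.~\eqref{Mani1}. Isotropy of $A\otimes B$ and $A\otimes B^{\ast}$ is immediate from the same vanishing.

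The only real work is Proposition~\ref{pro:bilin}, which the paper calls routine but which I take as given. For completeness, its invariance check runs as follows. Compute $\mathcal{B}([a_1\otimes b_1,a_2\otimes b_2],a_3\otimes b_3)$ and rewrite it using the identities already noted in the paper:
\[
\kappa(a_1\cdot a_2,a_3)=-\kappa(a_1,a_3\cdot a_2),\qquad \kappa(a_1\cdot a_2,a_3)=\kappa(a_2,a_1\cdot a_3+a_3\cdot a_1),
\]
together with the Leibniz invariance $\omega(b_1\circ b_2,b_3)=-\omega(b_2,b_1\circ b_3)$. Then match the result with $\mathcal{B}(a_1\otimes b_1,[a_2\otimes b_2,a_3\otimes b_3])$. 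Symmetry follows from both $\kappa$ and $\omega$ being skew-symmetric, and nondegeneracy holds because $\kappa\otimes\omega$ is nondegenerate.
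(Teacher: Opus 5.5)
Your proposal is correct and follows the paper's own argument. The paper's proof is the single line ``It follows from Proposition~\ref{pro:bilin}'': apply it to the quadratic Leibniz algebra $(B\oplus B^{\ast},\circ_d,\omega_d)$ and restrict. You have written out the transport of the bracket, the subalgebra and isotropy checks, and the expansion of $\kappa\otimes\omega_d$ into Eq.~\eqref{Mani1}.

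One caveat concerns your optional sketch of Proposition~\ref{pro:bilin}: the identity $\omega(b_1\circ b_2,b_3)=-\omega(b_2,b_1\circ b_3)$ alone does not close the computation. After using the two $\kappa$-identities, the invariance defect is $\kappa(a_1\cdot a_2,a_3)\big(\omega(b_1\circ b_2,b_3)+\omega(b_2\circ b_3,b_1)+\omega(b_3\circ b_1,b_2)\big)$. Making this cyclic sum vanish requires the defining invariance $\omega(b_1\circ b_2,b_3)=\omega(b_1,b_2\circ b_3+b_3\circ b_2)$. For instance, $e_1\circ e_1=e_2$ with $\omega(e_1,e_2)=1$ satisfies the first identity but not the second.
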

\begin{proof}
	It follows from Proposition~\ref{pro:bilin}
\end{proof}

With the same assumptions in Lemma~\ref{lem:man1}, due to the nondegeneracy of the bilinear form $\kappa$, through the linear isomorphism $\varphi: A\rightarrow A^\ast$ given by $\langle\varphi(a_{1}),\; a_{2}\rangle=\kappa(a_{1}, a_{2})$ for all $a_{1}, a_{2}\in A$, a Zinbiel algebra structure
$(A^{\ast}, \cdot')$ on $A^{\ast}$ is obtained by transporting of structure:
\begin{equation*}
	\xi\cdot'\eta=\varphi(\varphi^{-1}(\xi)\cdot\varphi^{-1}(\eta)), \;\; \forall \xi, \eta\in A^{\ast}.
\end{equation*}
Then we obtain the induced Lie algebra
$(A^* \otimes B^*, [-, -]_{A^* \otimes B^*})$ from $(A^*, \cdot')$ and $(B^*, \circ')$. 
Further, we can define a linear map $f: A \otimes (B \oplus B^*) \to (A \otimes B) \oplus
(A^* \otimes B^*)$ by
\begin{equation*}
	a_1 \otimes b_1 + a_2 \otimes \xi_2  \mapsto a_1 \otimes b_1 + \varphi(a_2) \otimes \xi_2, \;\; \forall a_1, a_2 \in A, b_1 \in B, \xi_2 \in B^*.
\end{equation*}
Evidently, $f$ is a linear isomorphism.
Thus we can transport the Lie algebra structure on $A \otimes (B \oplus B^*)$ to $(A \otimes B) \oplus (A^* \otimes B^*)$ by
\begin{equation*}
	[a_1 \otimes b_1 + \xi_1 \otimes \eta_1, a_2 \otimes b_2 + \xi_2 \otimes \eta_2]^\prime := f[a_1 \otimes b_1 + \varphi^{-1}(\xi_1) \otimes \eta_1, a_2 \otimes b_2 + \varphi^{-1}(\xi_2) \otimes \eta_2],
\end{equation*}
for all $a_{1}, a_{2}\in A$, $\xi_{1}, \xi_{2}\in A^{\ast}$, $b_{1}, b_{2}\in B$ and $\eta_{1}, \eta_{2}\in B^{\ast}$. 
Moreover, the restrictions of the Lie bracket $[-,-]^\prime$ on $A\otimes B$ and $A^* \otimes B^*$ coincide with $[-, -]_{A \otimes B}$ and $[-, -]_{A^* \otimes B^*}$ respectively. Hence, we arrive at the following conclusion
immediately.

\begin{lem}\label{lem:man2}
	With the notations above,
	$(\g \oplus \g^{\ast}, \g = A \otimes B, \g^* = A^{\ast} \otimes B^{\ast})$ is a Manin triple of Lie algebras associated with the bilinear form defined by Eq.~\eqref{eq:smp}.
	Moreover, this Manin triple is isomorphic to the Manin triple $((A\otimes B)\oplus(A\otimes B^{\ast}),\; A\otimes B,\; A\otimes B^{\ast})$ given in Lemma~\ref{lem:man1}.
\end{lem}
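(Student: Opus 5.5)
The plan has two parts, and both lean on Lemma~\ref{lem:man1} and on transport of structure along the isomorphism $F:=\id_{A\otimes B}\oplus(\varphi\otimes\id_{B^*})$ from $(A\otimes B)\oplus(A\otimes B^*)$ onto $(A\otimes B)\oplus(A^*\otimes B^*)$. This $F$ is the map $f$ above, once we identify $A\otimes(B\oplus B^*)$ with $(A\otimes B)\oplus(A\otimes B^*)$.

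\emph{Bracket.} By construction $[-,-]'$ makes $F$ a Lie algebra isomorphism, and $F$ maps $A\otimes B$ identically onto $\g=A\otimes B$ and $A\otimes B^*$ onto $\g^*=A^*\otimes B^*$. The first step is to check the claimed restrictions. On $A\otimes B$ this is immediate. On $A^*\otimes B^*$ I would expand $[\xi_1\otimes\eta_1,\xi_2\otimes\eta_2]'=f[\varphi^{-1}\xi_1\otimes\eta_1,\varphi^{-1}\xi_2\otimes\eta_2]$ using the formula of Proposition~\ref{pro:L-Z-lie}. Because $\cdot'$ is transported by $\varphi$, this gives $\xi_1\cdot'\xi_2\otimes\eta_1\circ'\eta_2-\xi_2\cdot'\xi_1\otimes\eta_2\circ'\eta_1$. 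Both pieces are therefore Lie subalgebras, and they span $\g\oplus\g^*$ as a direct sum.

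\emph{Bilinear form.} The key identity to verify is $\mathfrak{B}_d(F(x),F(y))=\mathcal{B}(x,y)$ for all $x,y$, with $\mathcal{B}$ given by Eq.~\eqref{Mani1}. Here I identify $\g^*=(A\otimes B)^*$ with $A^*\otimes B^*$ via $\langle\xi\otimes\eta,a\otimes b\rangle=\langle\xi,a\rangle\langle\eta,b\rangle$. For $x=(a_1\otimes b_1,a_1'\otimes\xi_1)$ and $y=(a_2\otimes b_2,a_2'\otimes\xi_2)$ we have
\begin{equation*}
\mathfrak{B}_d(F(x),F(y))=\langle\varphi(a_1'),a_2\rangle\langle\xi_1,b_2\rangle+\langle\varphi(a_2'),a_1\rangle\langle\xi_2,b_1\rangle=\kappa(a_1',a_2)\langle\xi_1,b_2\rangle+\kappa(a_2',a_1)\langle\xi_2,b_1\rangle .
\end{equation*}
Skew-symmetry of $\kappa$ gives $\kappa(a_2',a_1)=-\kappa(a_1,a_2')$, so this equals Eq.~\eqref{Mani1}.

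With these two facts in hand, the rest follows by transport. Lemma~\ref{lem:man1} says $\mathcal{B}$ is nondegenerate, symmetric and invariant, and that $A\otimes B$ and $A\otimes B^*$ are isotropic subalgebras. Since $F$ is a Lie isomorphism with $F^*\mathfrak{B}_d=\mathcal{B}$ that maps the two summands onto $\g$ and $\g^*$, the form $\mathfrak{B}_d$ inherits all of these properties with respect to $[-,-]'$. Hence $(\g\oplus\g^*,\g,\g^*)$ is a Manin triple associated with Eq.~\eqref{eq:smp}, and $F$ is an isomorphism of Manin triples in the sense of Definition~\ref{def:manin-Lie}.

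The only real obstacle is bookkeeping: the pairing identification $(A\otimes B)^*\cong A^*\otimes B^*$ has to be compatible with the sign conventions, namely the skew form $\omega_d$ in Eq.~\eqref{eq:bleib}, the minus sign in Eq.~\eqref{Mani1}, and the direction of $\varphi$. I would first test the form identity on pure tensors with $b_2=0$ and with $b_1=0$ separately, and if a sign mismatch appears, replace $\varphi$ by $-\varphi$. That replacement is still an isomorphism and transports the Zinbiel structure in the same way.
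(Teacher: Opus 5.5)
Your proposal is correct and takes the same route as the paper, which transports the Lie structure along $f$, notes that the restrictions to $A\otimes B$ and $A^{\ast}\otimes B^{\ast}$ are the induced brackets, and concludes immediately; you additionally spell out the isometry $\mathfrak{B}_d(F(x),F(y))=\mathcal{B}(x,y)$, which the paper leaves implicit, and your computation of it is right. Your closing fallback is unnecessary, since that computation already gives matching signs with $\varphi$ itself, and it is also inaccurate as stated: $-\varphi$ does not transport $\cdot$ to $\cdot'$, because $(-\varphi)\bigl((-\varphi)^{-1}\xi\cdot(-\varphi)^{-1}\eta\bigr)=-\,\xi\cdot'\eta$.
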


Lemmas~\ref{lem:man1} and \ref{lem:man2} yield the following proposition and commutative diagram.

\begin{pro}\label{pro:comm}
Let $(B, \circ, \vartheta)$ be a Leibniz bialgebra, $(A, \cdot, \kappa)$
be a quadratic Zinbiel algebra and $(A\otimes B, [-,-], \delta)$ be the Lie bialgebra
induced from $(B, \circ, \vartheta)$ by $(A, \cdot, \kappa)$.
Then $(A\otimes B, [-,-], \delta)$ coincides with the one obtained from the Manin triple $((A\otimes B)\oplus(A^{\ast}\otimes B^{\ast}),\; A\otimes B,\;
A^{\ast}\otimes B^{\ast})$ of Lie algebras given in Lemma \ref{lem:man2}, leading to the commutative diagram:
\begin{equation*}
	\xymatrix@C=3cm@R=0.6cm{
		\txt{$(B, \circ, \vartheta)$ \\ {\tiny a Leibniz bialgebra}}
		\ar[d]_{\rm Thm. ~\ref{thm:liebia-LZ}}\ar@{<->}[r]^{\rm Pro. ~\ref{pro:man-Leib}} &
		\txt{$(B\oplus B^{\ast}, B, B^{\ast})$ \\ {\tiny a Manin triple of Leibniz algebras}}
		\ar[d]^{\rm Lem. ~\ref{lem:man2}} \\
		\txt{$(A\otimes B, [-,-], \delta)$ \\ {\tiny a Lie bialgebra}}
		\ar@{<->}[r]^{\rm Pro. ~\ref{pro:man-Lie}\qquad\qquad} &
		\txt{$((A\otimes B)\oplus(A^{\ast}\otimes B^{\ast}),
			A\otimes B, A^{\ast}\otimes B^{\ast})$ \\
			{\tiny a Manin triples of Lie algebras}}}
\end{equation*}
\end{pro}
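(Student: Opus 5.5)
Since a Lie bialgebra is determined by its bracket and its cobracket, and by Proposition~\ref{pro:man-Lie} the Manin triple $((A\otimes B)\oplus(A^{\ast}\otimes B^{\ast}),\,A\otimes B,\,A^{\ast}\otimes B^{\ast})$ with the form $\mathfrak{B}_d$ determines a Lie bialgebra $(A\otimes B,[-,-],\delta')$, it suffices to prove $\delta'=\delta$. Both structures have the same bracket on $\g=A\otimes B$, because the restriction of $[-,-]'$ to $A\otimes B$ is $[-,-]_{A\otimes B}$. The cobracket $\delta'$ is by construction the transpose of the bracket $[-,-]_{A^\ast\otimes B^\ast}$ on $\g^\ast=A^\ast\otimes B^\ast$. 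So the plan is to show that $\delta^\ast:\g^\ast\otimes\g^\ast\to\g^\ast$ equals $[-,-]_{A^\ast\otimes B^\ast}$, i.e.
\begin{equation*}
\langle \delta(a\otimes b),\ (\xi_1\otimes\eta_1)\otimes(\xi_2\otimes\eta_2)\rangle=\langle a\otimes b,\ [\xi_1\otimes\eta_1,\xi_2\otimes\eta_2]_{A^\ast\otimes B^\ast}\rangle
\end{equation*}
for all $a\in A$, $b\in B$, $\xi_i\in A^\ast$ and $\eta_i\in B^\ast$.

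The steps are as follows.

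First, by Proposition~\ref{pro:man-Leib}, the Leibniz product $\circ'$ on $B^\ast$ in the Manin triple $(B\oplus B^\ast,B,B^\ast)$ is the dual of $\vartheta$:
\[
\langle\eta_1\circ'\eta_2,\,b\rangle=\langle \eta_1\otimes\eta_2,\,\vartheta(b)\rangle .
\]
This needs a check of conventions, in particular that no flip or sign enters when passing through $\omega_d$.

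Second, on the $A$ side, set $\xi_i=\varphi(c_i)$. Then
\[
\langle \xi_1\cdot'\xi_2,\,a\rangle=\kappa(c_1\cdot c_2,\,a).
\]
By Eq.~\eqref{eq:q2zc}, skew-symmetry of $\kappa$, and the identity $\widetilde\kappa(\Delta(a),c_1\otimes c_2)=\langle \varphi(c_1)\otimes\varphi(c_2),\Delta(a)\rangle$ (up to the sign $(-1)^2$ from transposing $\kappa$), this gives
\[
\langle\varphi(c_1)\otimes\varphi(c_2),\,\Delta(a)\rangle=\kappa(c_1,a)\kappa(c_2,\cdot)\text{-pairing}=-\kappa(a,c_1\cdot c_2)=\kappa(c_1\cdot c_2,a).
\]
So $\Delta$ is exactly the transpose of $\cdot'$ under $\varphi$. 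Signs here must be tracked carefully. I expect this sign bookkeeping to be the main obstacle: the choice $\langle\varphi(a_1),a_2\rangle=\kappa(a_1,a_2)$ versus $\kappa(a_2,a_1)$ changes the result by a sign. That sign appears squared in the tensor square, but not in the single pairing with $a$, so the relative order must be fixed consistently.

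Third, substitute both facts into Eq.~\eqref{colie}. The term $\Delta(a)\bullet\vartheta(b)$ pairs with $(\xi_1\otimes\eta_1)\otimes(\xi_2\otimes\eta_2)$ to give
\[
\langle\xi_1\cdot'\xi_2,\,a\rangle\,\langle\eta_1\circ'\eta_2,\,b\rangle .
\]
The term $-\hat\tau$ gives the same expression with the indices $1,2$ swapped. This is precisely the pairing of $a\otimes b$ with the bracket of Proposition~\ref{pro:L-Z-lie} on $A^\ast\otimes B^\ast$. Hence $\delta^\ast=[-,-]_{A^\ast\otimes B^\ast}$, so $\delta=\delta'$.

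Finally, Lemma~\ref{lem:man2} identifies this Manin triple with the one in Lemma~\ref{lem:man1}, which arose from $(B\oplus B^\ast,B,B^\ast)$. This gives commutativity of the diagram.
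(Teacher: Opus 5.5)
Your proposal is correct and takes essentially the paper's route (Lemma~\ref{lem:man1}/Lemma~\ref{lem:man2} together with Proposition~\ref{pro:man-Lie}). The paper only asserts the coincidence, whereas you supply the omitted duality check: with $\langle\varphi(a_1),a_2\rangle=\kappa(a_1,a_2)$ one gets $\langle\varphi(c_1)\otimes\varphi(c_2),\Delta(a)\rangle=\sum\kappa(c_1,a_{(1)})\kappa(c_2,a_{(2)})=\widetilde{\kappa}(\Delta(a),c_1\otimes c_2)=\kappa(c_1\cdot c_2,a)$, so $\Delta^*=\cdot'$, and together with $\vartheta^*=\circ'$ this makes $\delta^*$ the induced bracket on $A^*\otimes B^*$. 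Your garbled middle expression in the second step should read as that sum, and your sign bookkeeping is right: for this choice of $\varphi$ the form transported from Lemma~\ref{lem:man1} is exactly $\mathfrak{B}_d$.
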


%%%%%%%%%%%%%%%%%%%%%%%%%%%%%%%%%%%%%%%%%%%%%%%%%%%%%%%%%%%%%%%%%%%%%%%%%%%
\subsection{Quasi-triangular Lie bialgebras from Leibniz bialgebras}\label{ssec:qtLi-Lib}
In this subsection, we study Lie bialgebras that arise from quasi-triangular Leibniz bialgebras, especially those that are triangular or factorizable.

Let $(B, \circ)$ be a Leibniz algebra. 
Define a linear map $F: B\rightarrow \End(B\otimes B)$ by
\begin{equation*}
	F(b):=\id\otimes\fr_{\circ}(b)-(\fl_{\circ}+\fr_{\circ})(b)\otimes\id, \;\; \forall b \in B.
\end{equation*}
An element $r \in B \otimes B$ is called {\bf Leib-invariant} if $F(b)(r)=0$ for all $b\in B$.
If there exists an element $r\in B\otimes B$ such that $(B, \circ, \vartheta)$ is a
Leibniz bialgebra, where $\vartheta: B\rightarrow B\otimes B$ is given by
\begin{equation}
	\vartheta(b) = F(b)(r),  \;\; \forall b\in B, \label{cobLeb}
\end{equation}
then $(B, \circ, \vartheta)$ is called a {\bf coboundary Leibniz bialgebra} associated with $r$. 
Let $(B, \circ)$ be a Leibniz algebra and $r=\sum_{i}x_{i}\otimes y_{i}\in B\otimes B$. 
The equation
\begin{equation*}
	\mathbf{L}_{r}=r_{12} \circ r_{13}-r_{12} \circ r_{23}-r_{23} \circ r_{12}+r_{23} \circ r_{13}=0
\end{equation*}
is called the (classical) {\bf Leibniz Yang-Baxter equation ($\LYBE$)} in $(B, \circ)$.
\begin{pro}[\cite{BLST, TS}]\label{pro:sLib-bia}
	Let $(B, \circ)$ be a Leibniz algebra, $r \in B \otimes B$ and $\vartheta: B \rightarrow B \otimes B$ defined by Eq.~\eqref{cobLeb}.
	If $r$ is a solution of the $\LYBE$ in $(B, \circ)$ and $r-\tau(r)$ is Leib-invariant, then $(B, \circ, \vartheta)$ is a Leibniz bialgebra, which is called a {\bf quasi-triangular Leibniz bialgebra} associated with $r$.
	In particular, if $r$ is a symmetric solution of the $\LYBE$ in $(B, \circ)$, then $(B, \circ, \vartheta)$ is a Leibniz bialgebra, which is called a {\bf triangular Leibniz bialgebra} associated with $r$.
\end{pro}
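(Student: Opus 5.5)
The statement to prove is Proposition~\ref{pro:sLib-bia}: if $r$ solves the $\LYBE$ and $r-\tau(r)$ is Leib-invariant, then $\vartheta(b)=F(b)(r)$ makes $(B,\circ,\vartheta)$ a Leibniz bialgebra. There are three things to check: $\vartheta$ is coassociative in the Leibniz sense, the first compatibility condition holds, and the second compatibility condition holds. I would prove them in increasing order of difficulty, writing $r=\sum_i x_i\otimes y_i$ throughout.

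\emph{Second compatibility condition.} I would verify first
\[
\tau(\fr_\circ(b_2)\otimes\id)\vartheta(b_1)=(\fr_\circ(b_1)\otimes\id)\vartheta(b_2).
\]
I expand both sides using $F(b)=\id\otimes\fr_\circ(b)-(\fl_\circ+\fr_\circ)(b)\otimes\id$. The right-multiplication identities of Leibniz algebras apply here, in particular
\[
(x\circ y)\circ z=-(y\circ x)\circ z,
\]
so $\fr(z)\fl(y)=-\fr(z)\fr(y)$, together with $\fr(b_1)\fr(b_2)=\fr(b_2\circ b_1)-\fl(b_2)\fr(b_1)$. With these, the terms reduce to an expression involving only $\fr(b_1)\otimes\fr(b_2)$ applied to $r$ and to $\tau(r)$. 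The difference of the two sides should then be $F$ applied to $r-\tau(r)$, composed with some operator. Leib-invariance of $r-\tau(r)$ kills it.

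\emph{First compatibility condition.} Next I would show that $\vartheta(b_1\circ b_2)$ equals the stated expression. This is a pure representation-theoretic identity: $(B\otimes B,\ \cdot)$ carries a Leibniz-bimodule-type structure, and $F$ behaves like a 1-cocycle. Using the representation identities of Definition~\ref{def:lrep} for the regular representation, I expect the difference of the two sides to collapse to terms of the form $(\text{operator})\,F(b)(r-\tau(r))$. These come from the $(\id+\tau)$ factor acting on $\vartheta(b_1)$, and they vanish by Leib-invariance. This mirrors the Lie case, where $\delta=dr$ is automatically a cocycle.

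\emph{Coassociativity.} The main obstacle is the Leibniz coalgebra identity~\eqref{lia1} for $\vartheta$. My plan is to compute
\[
(\id\otimes\vartheta)\vartheta(b)-(\vartheta\otimes\id)\vartheta(b)-(\tau\otimes\id)(\id\otimes\vartheta)\vartheta(b)
\]
and to organize it in the form
\[
G(b)\,\mathbf L_r+(\text{terms linear in } r-\tau(r)\text{ contracted with } r),
\]
where $G(b)$ is a sum of operators of the type $\fl(b)$ or $\fr(b)$ placed in one of the three tensor slots. The first part vanishes because $r$ solves the $\LYBE$. For the second part, I would write $r=s+\tfrac12(r-\tau(r))$ with $s$ symmetric and use that Leib-invariance makes each remaining term a contraction with $F(\cdot)(r-\tau(r))=0$.

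The bookkeeping of which slot carries $\fl$ versus $\fr$, and the asymmetric twist $\tau\otimes\id$, is where errors are likely. To control it, I would dualize: pair against $\xi_1\otimes\xi_2\otimes\xi_3\in (B^*)^{\otimes 3}$ and view $r$ as a map $B^*\to B$. The coalgebra condition then becomes the Leibniz identity for the bracket
\[
\xi\circ'\eta=\vartheta^*(\xi\otimes\eta).
\]
I would then use the known equivalence, cited from~\cite{TS}, between the $\LYBE$ and the statement that $r^\sharp$ intertwines the brackets up to the invariant part. This avoids the full expansion into roughly twelve cubic terms; if it fails, I would fall back to the direct expansion.
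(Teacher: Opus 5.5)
The paper does not prove this proposition; it is quoted from \cite{BLST, TS}. So your argument has to stand on its own.

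Your two compatibility checks are correct. The second condition is equivalent to $(\fr_{\circ}(b_1)\otimes\fr_{\circ}(b_2))(r-\tau(r))=0$. This follows from $F(b_2)(r-\tau(r))=0$ together with $\fr_{\circ}(b_1)(\fl_{\circ}+\fr_{\circ})(b_2)=0$. For the first condition, two facts hold for every $r$:
\begin{align*}
&F(b_1\circ b_2)=[\fl_{\circ}(b_1)\otimes\id+\id\otimes\fl_{\circ}(b_1),\,F(b_2)],\\
&(\id+\tau)F(b_1)r=-(\fl_{\circ}(b_1)\otimes\id+\id\otimes\fl_{\circ}(b_1))r+\tau F(b_1)(r-\tau(r)).
\end{align*}
Hence the right-hand side minus the left-hand side is exactly $F(b_2)\tau F(b_1)(r-\tau(r))$, as you predicted.

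The gap is coassociativity. This is the substantive part, and you only announce it. The shape ``$G(b)\mathbf{L}_r$ plus invariance terms'' is asserted, not derived. The shortcut you propose also fails as stated, for two reasons.
\begin{itemize}
\item Proposition~\ref{pro:o-leib} covers only symmetric $r$. Passing to the symmetric part of $r$ leaves $\vartheta$ unchanged, but that part is not assumed to solve the $\LYBE$.
\item A homomorphism property of $r^\sharp$ cannot be pulled back to give the Leibniz identity on $B^*$, because $r^\sharp$ need not be injective.
\end{itemize}

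Here is what closes the gap. Eq.~\eqref{lia1} is the left Leibniz identity for $\xi\star\eta:=\vartheta^*(\xi\otimes\eta)$. One computes $\xi\star\eta=\kl(T'\xi)\eta+\kr(T\eta)\xi$, where $(\kl,\kr)=(-\fl_{\circ}^*,\fl_{\circ}^*+\fr_{\circ}^*)$ is the coregular representation, $T'=-r^\sharp$ and $T=-\tau(r)^\sharp$. Put
\begin{align*}
&s=r-\tau(r)\quad(\text{so } T'-T=-s^\sharp),\\
&\Omega(\xi,\eta)=T\xi\circ T\eta-T(\xi\star\eta),\\
&\Theta(\xi,\eta)=T'\xi\circ T'\eta-T'(\xi\star\eta).
\end{align*}
A direct pairing gives $\langle\Omega(\xi,\eta),\zeta\rangle=\langle\zeta\otimes\xi\otimes\eta,\mathbf{L}_r\rangle$ for every $r$, with no hypothesis. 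The representation identities of Definition~\ref{def:lrep}, applied to the coregular representation, then give
\begin{align*}
&\xi\star(\eta\star\zeta)-(\xi\star\eta)\star\zeta-\eta\star(\xi\star\zeta)\\
&\quad=\kl(\Theta(\xi,\eta))\zeta-\kr(\Omega(\eta,\zeta))\xi+\kr(\Omega(\xi,\zeta))\eta+\kl(s^\sharp\eta)\kr(T\zeta)\xi-\kr(s^\sharp\xi\circ T\zeta)\eta.
\end{align*}
Leib-invariance and skew-symmetry of $s$ can be used in the dual forms
\begin{align*}
&s^\sharp\fl_{\circ}^*(b)=-\fl_{\circ}(b)s^\sharp,\qquad s^\sharp(\fl_{\circ}^*+\fr_{\circ}^*)(b)=\fr_{\circ}(b)s^\sharp,\qquad (s^\sharp)^*=-s^\sharp.
\end{align*}
These give $\Theta=\Omega$ and make the last two terms cancel. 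Then $\mathbf{L}_r=0$ gives $\Omega=0$, which proves the Leibniz identity.

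For symmetric $r$ this reduces to the standard fact that an $\mathcal{O}$-operator induces a descendent Leibniz product. That fact is precisely the computation your shortcut was trying to avoid.

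This is the precise content of your predicted decomposition. Note that your $G(b)$ must also permute the tensor slots of $\mathbf{L}_r$, not just act by $\fl_{\circ}(b)$ or $\fr_{\circ}(b)$ in a single slot.
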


Let $B$ be a vector space. 
For any $r\in B\otimes B$, define a linear map $r^{\sharp}: B^{\ast}\rightarrow B$ by
\begin{equation*}
	\langle r^{\sharp}(\xi_{1}),\; \xi_{2}\rangle=\langle\xi_{1}\otimes\xi_{2},\; r\rangle, \;\; \forall \xi_{1}, \xi_{2}\in B^{\ast}.
\end{equation*}

\begin{defi}\label{def:fact-Leib}
Let $(B, \circ)$ be a Leibniz algebra, $r \in B \otimes B$ and $(B, \circ, \vartheta)$ a quasi-triangular Leibniz bialgebra associated with $r$.
If $\mathcal{I} = r^{\sharp}-\tau(r)^{\sharp}: B^{\ast}\rightarrow B$ is an isomorphism of vector spaces, 
then $(B, \circ, \vartheta)$ is called a {\bf factorizable Leibniz bialgebra}.
\end{defi}

Recall a {\bf $\bz$-graded Lie algebra} is a Lie algebra $(\g, [-,-]$ with a linear decomposition
$\g=\oplus_{i\in\bz}\g_{i}$ such that each $\g_{i}$ is finite-dimensional and $[\g_{i}, \g_{j}]\subseteq\g_{i+j}$ for all $i, j \in \bz$.
Let $(\g=\oplus_{i\in\bz}\g_{i}, [-,-])$ be a $\bz$-graded Lie algebra. 
Then $r=\sum_{i,j,\alpha}x_{i\alpha}\otimes y_{j\alpha}\in\g \hat{\otimes} \g$ is called {\bf completed Lie-invariant} if 
\begin{equation*}
	(\id \hat{\otimes} \ad(g) + \ad(g) \hat{\otimes} \id)(r) = 0, \;\; \forall g \in \g,
\end{equation*}
and called a {\bf completed solution of the CYBE} in $(\g, [-,-])$ if $r$ satisfies the classical Yang-Baxter equation ($\CYBE$):
\begin{equation*}
	\mathbf{C}_{r}:=[r_{12}, r_{13}]+[r_{12}, r_{23}]+[r_{13}, r_{23}]=0
\end{equation*}
as an element in $\g \hat{\otimes} \g\hat{\otimes} \g$.
When $\g = \g_{0}$, $r$ is simply called {\bf  Lie-invariant} and a {\bf solution of the CYBE} in $(\g, [-,-])$ respectively.

\begin{pro}[\cite{CP, LS}]\label{pro:comlie-bia}
	Let $(\g=\oplus_{i\in\bz}\g_{i}, [-,-])$ be a $\bz$-graded Lie algebra, $r\in\g\,\hat{\otimes}\,\g$ and $\delta: \g\rightarrow\g\,\hat{\otimes}\,\g$ be the linear map defined by
	\begin{equation}
		\delta(g) = (\id\,\hat{\otimes}\,\ad_{\g}(g)+\ad_{\g}(g)\,\hat{\otimes}\,\id)(r), \;\; \forall g \in \g.
		\label{coplie-cobo}
	\end{equation}
	If $r$ is a (completed) solution of the $\CYBE$ in $(\g, [-,-])$ and $r+\hat{\tau}(r)$ is Lie-invariant, then $(\g, [-,-], \delta)$ is a (completed) Lie bialgebra, which is called a {\bf quasi-triangular (completed) Lie bialgebra} associated with $r$.
	In particular, if $r$ is a skew-symmetric (completed) solution of the $\CYBE$ in
	$(\g, [-,-])$, then $(\g, [-,-], \delta)$ is a (completed) Lie bialgebra,
	which is called a {\bf triangular (completed) Lie bialgebra} associated with $r$.	
\end{pro}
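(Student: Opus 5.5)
The plan is to follow the classical Drinfeld argument, carried out in the completed setting. Write $r=\sum_{i,j,\alpha}x_{i\alpha}\otimes y_{j\alpha}\in\g\,\hat{\otimes}\,\g$ and split it as $r=\frac12(r-\hat\tau(r))+\frac12(r+\hat\tau(r))=\Lambda+S$. Here $\Lambda$ is skew-symmetric and $S$ is symmetric. By hypothesis $S$ is completed Lie-invariant, so $(\id\hat{\otimes}\ad(g)+\ad(g)\hat{\otimes}\id)(S)=0$ for every $g\in\g$. This gives $\delta(g)=(\id\hat{\otimes}\ad(g)+\ad(g)\hat{\otimes}\id)(\Lambda)$. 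Since $\Lambda$ is skew-symmetric, this immediately yields $\delta=-\hat\tau\delta$.

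Next comes the compatibility condition \eqref{comlie}. This is the standard fact that $\delta$ is a 1-cocycle because it is a coboundary. Writing $\delta(g)=g\cdot r$ for the adjoint action on $\g\hat{\otimes}\g$, the Jacobi identity gives
$$[g_1,g_2]\cdot r=g_1\cdot(g_2\cdot r)-g_2\cdot(g_1\cdot r),$$
which is exactly \eqref{comlie}. The only point to check is well-definedness: $\ad(g)$ shifts degrees by a bounded amount when $g$ is homogeneous. So $\ad(g)\hat{\otimes}\id$ acts termwise on $\prod_{i,j}\g_i\otimes\g_j$, each component receives finitely many contributions, and the identity holds componentwise.

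The main step is the co-Jacobi identity \eqref{lia1} for $\delta$. I would use the classical formula: for skew-symmetric $\Lambda$,
$$\mathrm{Alt}\,(\delta\hat{\otimes}\id)\delta(g)=g\cdot\mathbf{C}_\Lambda,$$
where $\mathrm{Alt}$ is the sum over cyclic permutations, $g$ acts diagonally on $\g^{\hat{\otimes}3}$, and \eqref{lia1} for skew $\delta$ is equivalent to the vanishing of this cyclic sum. I then expand $\mathbf{C}_r=\mathbf{C}_\Lambda+\mathbf{C}_S+(\text{mixed terms})$. Using invariance of $S$, the mixed terms combine into $[\Lambda_{12}+\Lambda_{13}+\Lambda_{23}$-type expressions with $S]$ that are annihilated or cancel. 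The cleaner approach is the Drinfeld route: $\mathbf{C}_r=\mathbf{C}_\Lambda+\mathbf{C}_S$ when $S$ is invariant, and $\mathbf{C}_S$ is itself an invariant element, since it equals $[S_{12},S_{13}]$ up to sign and invariance of $S$ makes it invariant. Then $\mathbf{C}_r=0$ gives $g\cdot\mathbf{C}_\Lambda=-g\cdot\mathbf{C}_S=0$.

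I expect the main obstacle to be justifying these manipulations in the completed setting. Products such as $[r_{12},r_{13}]$ involve infinite sums, and I must check that each graded component of $\g^{\hat{\otimes}3}$ receives only finitely many contributions. This uses the grading $[\g_i,\g_j]\subseteq\g_{i+j}$: the component in degrees $(p,q,s)$ of $[r_{12},r_{13}]$ collects terms with $x_{i}$ and $x_{i'}$ such that $i+i'=p$. That collection is finite only when $r$ has suitably bounded support, so I would either assume the sums converge as implicitly required for $\mathbf{C}_r$ to be defined, or invoke \cite{CP, LS}, whose argument carries over verbatim degreewise. The triangular case is then the special case $S=0$, where invariance is trivial and the same computation applies.
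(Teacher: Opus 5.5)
Your proposal is correct. It is the standard Drinfeld argument: pass to the skew part $\Lambda$ because $S$ is invariant, get the cocycle condition for free since $\delta$ is a coboundary, and kill $\mathrm{Cyc}\,(\delta\hat{\otimes}\id)\delta(g)=g\cdot\mathbf{C}_\Lambda$ using $\mathbf{C}_r=\mathbf{C}_\Lambda+\mathbf{C}_S$ (which needs both $\Lambda$ skew and $S$ symmetric invariant) and the invariance of $\mathbf{C}_S$. The paper gives no proof of its own here and relies on exactly this argument through \cite{CP, LS}.

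The one thing worth tightening is the completed setting. State the finiteness hypothesis on the support of $r$ in a $\hat{\tau}$-stable form, for example $r$ concentrated in a single total degree, as $\hat{r}$ in Eq.~\eqref{cop-sol} is. That way $\Lambda$, $S$ and each cross term in the expansion of $\mathbf{C}_r$ are individually defined, rather than only assuming that $\mathbf{C}_r$ itself makes sense.
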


\begin{defi}
	A (completed) quasi-triangular Lie bialgebra $(\g, [-,-], \delta)$ associated with $r$ is called a {\bf factorizable (completed) Lie bialgebra} if $\mathcal{I}=r^{\sharp}+\tau(r)^{\sharp}: \g^{\ast}\rightarrow\g$ is an isomorphism of vector spaces.
\end{defi}

We now consider the relation between the solutions of the $\LYBE$ in a Leibniz algebra and the solutions of the $\CYBE$ in the induced Lie algebra, leading to the following conclusion.
\begin{pro}\label{pro:LYBE-CYBE}
	Let $(B, \circ)$ be a Leibniz algebra, $(A, \cdot, \kappa)$ be a quadratic Zinbiel algebra and $(\g:=A\otimes B, [-,-])$ be the induced Lie algebra. 
	Let $\{e_{1}, e_{2},\cdots, e_{n}\}$ be a basis of $A$
	and $\{f_{1}, f_{2},\cdots, f_{n}\}$ be the dual basis of $\{e_{1}, e_{2},\cdots, e_{n}\}$ with respect to $\kappa$, i.e., $\kappa(f_{j}, e_{i})=\delta_{ij}$.
	If $r=\sum_{i}x_{i}\otimes y_{i}\in B\otimes B$ is a solution of the $\LYBE$ in $(B, \circ)$ and $r-\tau(r)$ is Leib-invariant,
	then
	\begin{equation}
		\tilde{r} := \sum_{i,j}(e_{j}\otimes x_{i})\otimes(f_{j}\otimes y_{i})
		\in(A\otimes B)\otimes(A\otimes B)  \label{r-max}
	\end{equation}
	is a solution of the $\CYBE$ in $(\g, [-,-])$ and $\tilde{r}+\tau(\tilde{r})$
	is Lie-invariant.
\end{pro}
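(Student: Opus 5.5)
We will prove the two claims separately, writing $\tilde r=\sum_{i,j}(e_j\otimes x_i)\otimes(f_j\otimes y_i)$. The basic tool is the Casimir-type element $\sum_j e_j\otimes f_j\in A\otimes A$ attached to $\kappa$. Invariance and skew-symmetry of $\kappa$ turn every product with this element into a transfer of the multiplication from one tensor leg to the other. Concretely, for $a\in A$ we will record identities of the form
\begin{equation*}
\sum_j a\cdot e_j\otimes f_j=\sum_j e_j\otimes (a\cdot f_j+f_j\cdot a),\qquad \sum_j e_j\cdot a\otimes f_j=-\sum_j e_j\otimes f_j\cdot a .
\end{equation*}
Each is obtained by pairing both sides with $\kappa$ against test vectors and using $\kappa(a_1\cdot a_2,a_3)=\kappa(a_2,a_1\cdot a_3+a_3\cdot a_1)$ and $\kappa(a_1\cdot a_2,a_3)=-\kappa(a_1,a_3\cdot a_2)$. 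These are the same manipulations used to derive the Sweedler identities for $\Delta$ in the proof of Theorem~\ref{thm:liebia-LZ}.

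\textbf{Invariance.} We have $\tilde r+\tau(\tilde r)=\sum (e_j\otimes x_i)\otimes(f_j\otimes y_i)+(f_j\otimes y_i)\otimes(e_j\otimes x_i)$. Skew-symmetry of $\kappa$ gives $\sum_j f_j\otimes e_j=-\sum_j e_j\otimes f_j$, so
\begin{equation*}
\tilde r+\tau(\tilde r)=\sum_j (e_j\otimes f_j)\bullet (r-\tau(r)).
\end{equation*}
Apply $\ad(a\otimes b)\otimes\id+\id\otimes\ad(a\otimes b)$ to this and expand with Proposition~\ref{pro:L-Z-lie}. This produces four terms, each with an $A$-factor $a\cdot e_j$, $e_j\cdot a$, $a\cdot f_j$ or $f_j\cdot a$. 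The Casimir identities move every $A$-product onto the second leg, so all terms become $\sum_j e_j\otimes(\text{product})\,\bullet\,(\text{element of }B\otimes B)$. Collecting coefficients of $\sum_j e_j\otimes a\cdot f_j$ and $\sum_j e_j\otimes f_j\cdot a$, we expect the $B$-part to be exactly $F(b)(r-\tau(r))$, or its image under $\tau$ composed with an $\fl$-term that cancels. Leib-invariance then kills everything.

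\textbf{CYBE.} Expand $\mathbf C_{\tilde r}=[\tilde r_{12},\tilde r_{13}]+[\tilde r_{12},\tilde r_{23}]+[\tilde r_{13},\tilde r_{23}]$. This gives six terms, each a product of an $A$-part (a triple tensor built from two Casimir elements and one Zinbiel product) and a $B$-part of the form $r_{12}\circ r_{13}$, $r_{13}\circ r_{12}$, and so on. Using the Casimir identities, we normalize each $A$-part to a standard shape, for instance $\sum_{j,k} e_j\otimes e_k\otimes (f_j\cdot f_k)$ and $\sum_{j,k}e_j\otimes e_k\otimes(f_k\cdot f_j)$. The Zinbiel identity (equivalently, commutativity of $\diamond$) relates the shapes that arise, leaving two independent $A$-shapes. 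The $B$-coefficient of one shape should be $\mathbf L_r$. The other coefficient should be a combination of $\mathbf L_r$, its permutations, and terms that are $F(\cdot)$ applied to $r-\tau(r)$ in one leg, which vanish by Leib-invariance.

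The main obstacle is this last bookkeeping step. One must check that after normalization the non-$\mathbf L_r$ remainder is genuinely expressible through $F(x_i)(r-\tau(r))$ or $F(y_i)(r-\tau(r))$; this is why the hypothesis is needed even for the CYBE, as in the proof of Proposition~\ref{pro:sLib-bia}. The first thing to try is to choose the normalization so that the Zinbiel product always sits in the third tensor slot. We would then compare with the standard computation in \cite{TS,BLST} showing that $\vartheta$ satisfies the coassociativity-type identity, where exactly these combinations appear.
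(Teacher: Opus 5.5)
Your architecture is the paper's. The two Casimir identities you record are correct, and they use only skew-symmetry and invariance of $\kappa$. The six terms of $\mathbf C_{\tilde r}$ do collapse onto two $A$-shapes, one of whose coefficients is exactly $\mathbf L_r$. Your observation $\tilde r+\tau(\tilde r)=(\sum_j e_j\otimes f_j)\bullet(r-\tau(r))$ is a genuine economy over the paper's eight-term expansion in the invariance half. However, the CYBE half stops exactly at the only step where Leib-invariance is used, and you leave that step as an expectation; as written it is a plan, not a proof.

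Three smaller corrections:
\begin{itemize}
\item The Zinbiel identity plays no role in the normalization. Each $A$-shape contains a single product, and all relations among the shapes come from the Casimir identities alone. Commutativity of $\diamond$ holds for any product, so it cannot be the input.
\item In the invariance half, put $s=r-\tau(r)$. The coefficient of $\sum_j e_j\otimes f_j\cdot a$ is $-F(b)(s)$ on the nose.
\item The coefficient of $\sum_j e_j\otimes a\cdot f_j$, however, is $(\fl(b)\otimes\id+\id\otimes\fl(b))(s)$. This equals $-(\id-\tau)F(b)(s)$ only because $\tau(s)=-s$, and you should say so explicitly.
\end{itemize}

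To close the CYBE step, set $Y_1=\sum_{p,q}e_p\cdot e_q\otimes f_p\otimes f_q$ and $Y_2=\sum_{p,q}e_q\cdot e_p\otimes f_p\otimes f_q$. Your Casimir identities give
\begin{itemize}
\item $\sum_{p,q}e_p\otimes f_p\cdot e_q\otimes f_q=-Y_1$;
\item $\sum_{p,q}e_p\otimes e_q\cdot f_p\otimes f_q=Y_1+Y_2$;
\item $\sum_{p,q}e_p\otimes e_q\otimes f_p\cdot f_q=Y_2$;
\item $\sum_{p,q}e_p\otimes e_q\otimes f_q\cdot f_p=-(Y_1+Y_2)$.
\end{itemize}
Hence
\begin{equation*}
\mathbf C_{\tilde r}=Y_1\bullet\mathbf L_r+Y_2\bullet\big(r_{13}\circ r_{23}+r_{23}\circ r_{13}-r_{23}\circ r_{12}-r_{13}\circ r_{12}\big).
\end{equation*}
The missing ingredient is the direct expansion
\begin{equation*}
\mathbf L_r=\sum_{i,j}\big(x_i\circ x_j\otimes y_i\otimes y_j-x_j\otimes y_j\circ y_i\otimes x_i-x_j\otimes y_i\circ y_j\otimes x_i+x_j\otimes y_i\otimes x_i\circ y_j\big)+\sum_j x_j\otimes F(y_j)\big(r-\tau(r)\big).
\end{equation*}
Applying $-(\id\otimes\tau)$ to the first sum on the right gives exactly the $Y_2$-coefficient above. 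So the remainder you were unsure about is $(\id\otimes\tau)\sum_j x_j\otimes F(y_j)(r-\tau(r))$, which vanishes by Leib-invariance. Therefore $\mathbf C_{\tilde r}=Y_1\bullet\mathbf L_r-Y_2\bullet(\id\otimes\tau)\mathbf L_r=0$, which is how the paper finishes.
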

\begin{proof}
	For all $s, u, v \in\{1, 2,\cdots, n\}$, we have
	\begin{align*}
		\widetilde{\kappa}(e_{s}\otimes e_{u}\otimes e_{v},\; \sum_{p,q} e_{p}\cdot e_{q} \otimes f_{p}\otimes f_{q}) &= \kappa(e_{s},\; e_{u} \cdot e_{v}),\\[-2mm]
		\widetilde{\kappa}(e_{s} \otimes e_{u} \otimes e_{v},\; \sum_{p,q}e_{p}\otimes f_{p} \cdot e_{q} \otimes f_{q} ) &=-\kappa(e_{s},\; e_{u}\cdot e_{v}).
	\end{align*}
	By the nondegeneracy of $\widetilde{\kappa}$, we obtain
	\begin{equation*}
		\sum_{p,q} e_{p} \cdot e_{q} \otimes f_{p} \otimes f_{q} = -\sum_{p,q} e_{p} \otimes f_{p}\cdot e_{q} \otimes f_{q}.
	\end{equation*}
	Similarly, we have
	\begin{align*}
		&\sum_{p,q}e_{q}\cdot e_{p}\otimes f_{p}\otimes f_{q}
		=\sum_{p,q}e_{p}\otimes e_{q}\otimes f_{p}\cdot f_{q}, \\
		&\sum_{p,q}e_{p}\otimes e_{q}\cdot f_{p} \otimes f_{q}=-\sum_{p,q}e_{p}\otimes e_{q}\otimes f_{q}\cdot f_{p}=\sum_{p,q}\big(e_{p}\cdot e_{q}\otimes f_{p}\otimes f_{q}
		 + e_{q}\cdot e_{p}\otimes f_{p}\otimes f_{q}\big)
	\end{align*}
	Therefore, we have
	\begin{align*}
	&\;[\tilde{r}_{12},\; \tilde{r}_{13}]+[\tilde{r}_{12},\; \tilde{r}_{23}]
	+[\tilde{r}_{13},\; \tilde{r}_{23}]\\
	=&\;\sum_{i,j}\sum_{p,q}\Big( (e_{p}\cdot e_{q} \otimes f_{p} \otimes f_{q})
	\bullet ( x_{i} \circ x_{j} \otimes y_{i} \otimes y_{j} )
	-( e_{q}\cdot e_{p} \otimes f_{p}\otimes f_{q})
	\bullet ( x_{j} \circ x_{i} \otimes y_{i}\otimes y_{j} )\\[-4mm]
	&\qquad\quad+(e_{p}\otimes f_{p}\cdot e_{q} \otimes f_{q})
	\bullet (x_{i}\otimes y_{i} \circ x_{j} \otimes y_{j} )
	-( e_{p}\otimes e_{q}\cdot f_{p} \otimes f_{q} )
	\bullet (x_{i}\otimes x_{j} \circ y_{i} \otimes y_{j})\\[-1mm]
	&\qquad\quad + ( e_{p}\otimes e_{q}\otimes f_{p}\cdot f_{q} )\bullet
	(x_{i} \otimes x_{j}\otimes y_{i} \circ y_{j} )
	-( e_{p}\otimes e_{q}\otimes f_{q}\cdot f_{p} )\bullet
	(x_{i}\otimes x_{j}\otimes y_{j} \circ y_{i}) \Big) \\
	=&\;\sum_{i,j}\sum_{p,q}\Big( \\
	&\quad ( e_{p}\cdot e_{q}\otimes f_{p}\otimes f_{q} ) \bullet
	( x_{i} \circ x_{j} \otimes y_{i} \otimes y_{j}
	-x_{i}\otimes y_{i} \circ x_{j} \otimes y_{j} - x_{i}\otimes x_{j} \circ y_{i} \otimes y_{j} + x_{i}\otimes x_{j} \otimes y_{j} \circ y_{i} )\\[-2mm]
	&+( e_{q} \cdot e_{p} \otimes f_{p}\otimes f_{q})\bullet
	(x_{i}\otimes x_{j}\otimes y_{i} \circ y_{j} 
	+x_{i}\otimes x_{j}\otimes y_{j} \circ y_{i} - x_{i}\otimes x_{j} \circ y_{i} \otimes y_{j} - x_{j} \circ x_{i} \otimes y_{i} \otimes y_{j})\Big).
	\end{align*}
	Since $r-\tau(r)$ is Leib-invariant, we get
	\begin{align*}
	\mathbf{L}_{r} &= \sum_{i,j}( x_{i} \circ x_{j} \otimes y_{i}\otimes y_{j}-x_{i}\otimes
	 y_{i} \circ x_{j} \otimes y_{j}-x_{j}\otimes x_{i} \circ y_{j} \otimes y_{i}
	+x_{j}\otimes x_{i}\otimes y_{i} \circ y_{j} )\\[-2mm]
	&=\sum_{i,j} ( x_{i} \circ x_{j} \otimes y_{i}\otimes y_{j}
	-x_{j}\otimes y_{j} \circ y_{i} \otimes x_{i}-x_{j}\otimes y_{i} \circ y_{j} \otimes x_{i} + x_{j}\otimes y_{i}\otimes x_{i} \circ y_{j} ).
	\end{align*}
	Hence, we have
	\begin{align*}
		&[\tilde{r}_{12},\; \tilde{r}_{13}]+[\tilde{r}_{12},\; \tilde{r}_{23}]
		+[\tilde{r}_{13},\; \tilde{r}_{23}] \\
		=&\Big(\sum_{p,q} e_{p}\cdot e_{q}\otimes f_{p}\otimes f_{q}\Big)\bullet\mathbf{L}_{r}-\Big(\sum_{p,q} e_{q} \cdot e_{p} \otimes f_{p} \otimes f_{q}\Big)\bullet(\id\otimes\tau)(\mathbf{L}_{r}) \\
		=&0.
	\end{align*}
	That is, $\tilde{r}$ is a solution of the $\CYBE$ in $(\g, [-,-])$. 
	Next, we show that $\tilde{r}+\tau(\tilde{r})$ is Lie-invariant.
	For all $b \in B$ and $p \in \{1, 2,\cdots, n\}$, we have
	\begin{align*}
	&\;\big(\id\otimes\ad(e_{p}\otimes b)+\ad(e_{p}\otimes b)
	\otimes\id\big)(\tilde{r}+\tau(\tilde{r}))\\
	=&\;\sum_{i,j}\Big( (e_{j}\otimes e_{p}\cdot f_{j} ) \bullet
	(x_{i}\otimes b \circ y_{i} )-(e_{j}\otimes f_{j}\cdot e_{p} )\bullet
	(x_{i}\otimes y_{i} \circ b )\\[-5mm]
	&\qquad+( e_{p}\cdot e_{j} \otimes f_{j})\bullet ( b \circ x_{i} \otimes y_{i})
	-( e_{j}\cdot e_{p} \otimes f_{j}) \bullet (x_{i} \circ b\otimes y_{i})\\[-1mm]
	&\qquad+(f_{j}\otimes e_{p}\cdot e_{j} )\bullet (y_{i}\otimes b \circ x_{i} )
	-(f_{j}\otimes e_{j}\cdot e_{p} )\bullet (y_{i}\otimes x_{i} \circ b)\\[-1mm]
	&\qquad+(e_{p}\cdot f_{j} \otimes e_{j})\bullet( b \circ y_{i}\otimes x_{i})
	-(f_{j}\cdot e_{p}\otimes e_{j})\bullet(y_{i} \circ b \otimes x_{i}) \Big).
	\end{align*}
	For all $s, t\in\{1, 2,\cdots, n\}$, noting that
	\begin{equation*}
		\widetilde{\kappa}(e_{s}\otimes e_{t},\; \sum_{j}e_{j}\otimes f_{j}\cdot e_{p} ) = \kappa(e_{s},\; e_{t}\cdot e_{p}) = -\widetilde{\kappa}(e_{s}\otimes e_{t},\; \sum_{j} e_{j} \cdot e_{p} \otimes f_{j}),
	\end{equation*}
	and $\widetilde{\kappa}$ is nondegenerate, we have
	\begin{equation*}
		\sum_{j}e_{j}\otimes f_{j}\cdot e_{p} 
		=-\sum_{j} e_{j}\cdot e_{p} \otimes f_{j}.
	\end{equation*}
	Similarly, we have 
	\begin{align*}
		&\sum_{j}e_{j}\otimes f_{j}\cdot e_{p} = \sum_{j} f_{j} \cdot e_{p} 
		\otimes e_{j}=-\sum_{j} f_{j} \otimes e_{j}\cdot e_{p} , \\
		&\sum_{j} e_{p}\cdot f_{j} 
		\otimes e_{j}=-\sum_{j} e_{p}\cdot e_{j} \otimes f_{j}, \\
		&\sum_{j}f_{j}\otimes e_{p}\cdot e_{j}=-\sum_{j}e_{j}\otimes e_{p}\cdot f_{j} =\sum_{j}e_{j}\otimes
		 f_{j}\cdot e_{p} +\sum_{j} e_{p}\cdot f_{j} \otimes e_{j}.
	\end{align*}
	Therefore, we have
	\begin{align*}
		&\;\big(\id\otimes\ad(e_{p}\otimes b)+\ad(e_{p}\otimes b)
		\otimes\id\big)(\tilde{r}+\tau(\tilde{r}))\\
		=&\;\sum_{i,j}\Big( ( e_{p}\cdot f_{j} \otimes e_{j})\bullet
		(y_{i}\otimes b \circ x_{i} + b \circ y_{i} \otimes x_{i}
		-x_{i}\otimes b \circ y_{i} - b \circ x_{i} \otimes y_{i})\\[-4mm]
		&\quad-(e_{j}\otimes f_{j} \cdot e_{p} )\bullet
		(x_{i}\otimes y_{i} \circ b +x_{i}\otimes b \circ y_{i} - x_{i} \circ b \otimes y_{i} -y_{i}\otimes b \circ x_{i} 
		-y_{i}\otimes x_{i} \circ b + y_{i} \circ b \otimes x_{i})\Big).
	\end{align*}
	Since $r-\tau(r)$ is Leib-invariant, we have
	\begin{align*}
	&\sum_{i} ( y_{i}\otimes b \circ x_{i} + b \circ y_{i} \otimes x_{i}
	- x_{i} \otimes b \circ y_{i} - b \circ x_{i} \otimes y_{i} ) = 0,\\[-2mm]
	&\sum_{i}(x_{i}\otimes y_{i} \circ b + x_{i}\otimes b \circ y_{i} - x_{i} \circ b \otimes y_{i}
	-y_{i}\otimes b \circ x_{i} -y_{i}\otimes x_{i} \circ b + y_{i} \circ b \otimes x_{i})=0.
	\end{align*}
	Thus, $\big(\id\otimes\ad(e_{p}\otimes b)+\ad(e_{p}\otimes b) \otimes\id\big)(\tilde{r}+\tau(\tilde{r})) = 0$ and $\tilde{r}+\tau(\tilde{r})$ is Lie-invariant.
\end{proof}

In the special case where $r$ is symmetric, we directly obtain the following corollary.
\begin{cor}\label{cor:sLYBE-sCYBE}
	Let $(B, \circ)$ be a Leibniz algebra, $(A, \cdot, \kappa)$ be a quadratic Zinbiel algebra and $(A \otimes B, [-,-])$ be the induced Lie algebra. 
	Let $\{e_{1}, e_{2},\cdots, e_{n}\}$ be a basis of $A$ and $\{f_{1}, f_{2},\cdots, f_{n}\}$ be the dual basis of $\{e_{1}, e_{2},\cdots, e_{n}\}$ with respect to $\kappa$.
	If $r = \sum_{i} x_{i} \otimes y_{i} \in B\otimes B$ is a symmetric solution of the $\LYBE$ in $(B, \circ)$, 
	then $\tilde{r}$ given by Eq.~\eqref{r-max} is a skew-symmetric solution of the $\CYBE$ in $(A \otimes B, [-,-])$.
\end{cor}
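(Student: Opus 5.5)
The corollary splits into two claims. First, $\tilde{r}$ solves the $\CYBE$. Second, $\tilde{r}$ is skew-symmetric.

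For the first claim, I would apply Proposition~\ref{pro:LYBE-CYBE} directly. If $r$ is symmetric, i.e. $\tau(r)=r$, then $r-\tau(r)=0$. The zero tensor is trivially Leib-invariant, since $F(b)(0)=0$ for all $b\in B$. Hence the hypotheses of Proposition~\ref{pro:LYBE-CYBE} hold, and $\tilde{r}$ is a solution of the $\CYBE$ in $(A\otimes B,[-,-])$.

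For the second claim, I would show $\tau(\tilde{r})=-\tilde{r}$ by combining the symmetry of $r$ with the skew-symmetry of $\kappa$. The key step is to express $\sum_j f_j\otimes e_j$ in terms of $\sum_j e_j\otimes f_j$. Write $\Omega=\sum_j e_j\otimes f_j\in A\otimes A$. Pair it against $e_s\otimes e_t$ with $\widetilde{\kappa}$, using $\kappa(f_j,e_i)=\delta_{ij}$ and skew-symmetry:
\begin{equation*}
\widetilde{\kappa}(e_s\otimes e_t,\ \Omega)=\kappa(e_s,e_s')\ldots
\end{equation*}
Rather than fix coordinates, it is cleaner to note that $\Omega$ is the canonical element identifying $\id_A$ through $\kappa$. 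Swapping the tensor factors corresponds to replacing $\kappa$ by its transpose $\kappa^{T}$. Since $\kappa^{T}=-\kappa$, this gives
\begin{equation*}
\sum_j f_j\otimes e_j=-\sum_j e_j\otimes f_j .
\end{equation*}
To verify this concretely, I would pair both sides with $e_s\otimes e_t$ under $\widetilde{\kappa}$ and check equality via nondegeneracy. This is the same technique the paper uses in the proof of Proposition~\ref{pro:LYBE-CYBE}.

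Given this identity, the skew-symmetry follows by direct computation. We have $\tau(\tilde{r})=\sum_{i,j}(f_j\otimes y_i)\otimes(e_j\otimes x_i)$. Because $r=\sum_i x_i\otimes y_i$ is symmetric, we may exchange $x_i$ and $y_i$, obtaining $\tau(\tilde{r})=\sum_{i,j}(f_j\otimes x_i)\otimes(e_j\otimes y_i)$. Then the identity above replaces $\sum_j f_j\otimes e_j$ by $-\sum_j e_j\otimes f_j$, so $\tau(\tilde{r})=-\tilde{r}$.

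The only step requiring care is the sign identity $\sum_j f_j\otimes e_j=-\sum_j e_j\otimes f_j$, which I would settle by the nondegeneracy pairing argument. The rest is immediate from Proposition~\ref{pro:LYBE-CYBE}.
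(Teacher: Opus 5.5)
Your proposal is correct and follows the paper's proof. The $\CYBE$ part comes from Proposition~\ref{pro:LYBE-CYBE}, since $r-\tau(r)=0$ is trivially Leib-invariant, and skew-symmetry comes from the identity $\sum_j f_j\otimes e_j=-\sum_j e_j\otimes f_j$ (obtained from nondegeneracy of $\widetilde{\kappa}$) together with $\tau(r)=r$. You should replace your unfinished display with the explicit pairing $\widetilde{\kappa}(e_s\otimes e_t,\sum_j e_j\otimes f_j)=-\kappa(e_s,e_t)=-\widetilde{\kappa}(e_s\otimes e_t,\sum_j f_j\otimes e_j)$, which is exactly the paper's computation.
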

\begin{proof}
	It is sufficient to prove that $\tilde{r}$ is skew-symmetric.
	For all $s, t \in \{1, \cdots, n\}$, we have
	\begin{equation*}
		\widetilde{\kappa}(e_{s}\otimes e_{t},\; \sum_{j}e_{j}\otimes f_{j}) = -\kappa(e_{s}, e_{t})=\kappa(e_{t}, e_{s}) = -\widetilde{\kappa}(e_{s}\otimes e_{t},\; \sum_{j}f_{j}\otimes e_{j}).
	\end{equation*}
	The nondegeneracy of $\widetilde{\kappa}$ yields that $\sum_{j}e_{j}\otimes f_{j} =-\sum_{j}f_{j}\otimes e_{j}$. 
	Noting that $r$ is symmetric, we show that $\tilde{r}$ is skew-symmetric.
	The proof is completed.
\end{proof}

\begin{thm}\label{thm:indu-sLiebia-lei}
	With the same assumptions in Proposition~\ref{pro:LYBE-CYBE}.
	Let $\vartheta: B \rightarrow B \otimes B$ be the linear map defined by Eq.~\eqref{cobLeb}, $\Delta: A \rightarrow A \otimes A$ be the linear map defined by Eq.~\eqref{eq:q2zc} and $\delta: \g \rightarrow \g \otimes \g$ be the linear map defined by Eq.~\eqref{colie}.
	Then $(B, \circ, \vartheta)$ is a Leibniz bialgebra by Proposition~\ref{pro:sLib-bia} and hence $(\g, [-,-], \delta)$ is a Lie bialgebra by Theorem~\ref{thm:liebia-LZ}.
	It coincides with the quasi-triangular Lie bialgebra with $\delta$ defined by Eq.~\eqref{coplie-cobo} through $\tilde{r}$ by Proposition~\ref{pro:comlie-bia}, where $\tilde{r}$ is defined by Eq.~\eqref{r-max}. 
	Hence, we have the following commutative diagram.
	\begin{equation*}
	\xymatrix@C=3cm@R=0.6cm{
		\txt{$r$ \\ {\tiny a solution of the $\LYBE$ in $(B, \circ)$}\\
			{\tiny such that $r-\tau(r)$ is Leib-invariant}}
		\ar[d]_{{\rm Pro.}~\ref{pro:LYBE-CYBE}}\ar[r]^{{\rm Pro.}~\ref{pro:sLib-bia}} &
		\txt{$(B, \circ, \vartheta)$ \\ {\tiny a quasi-triangular Leibniz bialgebra}}
		\ar[d]^{{\rm Thm.}~\ref{thm:liebia-LZ}} \\
		\txt{$\tilde{r}$ \\ {\tiny a solution of the $\CYBE$ in $(A\otimes B, [-,-])$}\\
			{\tiny such that $\tilde{r}+\tau(\tilde{r})$ is Lie-invariant}}
		\ar[r]^{{\rm Pro.}~\ref{pro:comlie-bia}} &
		\txt{$(A\otimes B, [-,-], \delta)$ \\
			{\tiny the induced quasi-triangular Lie bialgebra}}}
	\end{equation*}
	In particular, if $(B, \circ, \vartheta)$ is triangular (resp. factorizable), then $(\g, [-,-], \delta)$ is triangular (resp. factorizable).
\end{thm}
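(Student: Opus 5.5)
The core claim is that the coproduct $\delta$ of Eq.~\eqref{colie}, built from $\Delta$ (Eq.~\eqref{eq:q2zc}) and $\vartheta(b)=F(b)(r)$, coincides with the coboundary $\delta_{\tilde r}(g)=(\id\otimes\ad(g)+\ad(g)\otimes\id)(\tilde r)$. Since both maps are linear, it suffices to compare them on $g=e_p\otimes b$ for a basis element $e_p$ of $A$ and $b\in B$.

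\emph{Step 1: expand the coboundary.} Writing $\tilde r=\sum_{i,j}(e_j\otimes x_i)\otimes(f_j\otimes y_i)$, the definition of the induced bracket gives
\begin{align*}
\delta_{\tilde r}(e_p\otimes b)=\sum_{i,j}\Big(&(e_j\otimes e_p\cdot f_j)\bullet(x_i\otimes b\circ y_i)-(e_j\otimes f_j\cdot e_p)\bullet(x_i\otimes y_i\circ b)\\
&+(e_p\cdot e_j\otimes f_j)\bullet(b\circ x_i\otimes y_i)-(e_j\cdot e_p\otimes f_j)\bullet(x_i\circ b\otimes y_i)\Big).
\end{align*}

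\emph{Step 2: rewrite the $A$-tensors in terms of $\Delta(e_p)$.} This uses the dual-basis manipulations already carried out in the proof of Proposition~\ref{pro:LYBE-CYBE}. Pairing with $e_s\otimes e_t$ through $\widetilde\kappa$ and applying Eq.~\eqref{eq:q2zc} with the invariance identities, I expect
\begin{align*}
&\textstyle\sum_j e_j\otimes f_j\cdot e_p=-\sum_j e_j\cdot e_p\otimes f_j=\tau\Delta(e_p),\\
&\textstyle\sum_j e_p\cdot e_j\otimes f_j=-(\Delta+\tau\Delta)(e_p)\ \text{(up to sign)},\\
&\textstyle\sum_j e_j\otimes e_p\cdot f_j=-(\Delta+\tau\Delta)(e_p)\ \text{(up to sign)}.
\end{align*}
All three come from checking $\widetilde\kappa(e_s\otimes e_t,-)$ against $\kappa(e_p,\,e_s\cdot e_t)$ or $\kappa(e_p,\,e_t\cdot e_s)$. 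After substitution, $\delta_{\tilde r}(e_p\otimes b)$ becomes
\[
\Delta(e_p)\bullet X(b)+\tau\Delta(e_p)\bullet Y(b),
\]
where $X(b),Y(b)\in B\otimes B$ are explicit combinations of the terms $x_i\otimes b\circ y_i$, $x_i\otimes y_i\circ b$, $b\circ x_i\otimes y_i$ and $x_i\circ b\otimes y_i$.

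\emph{Step 3: compare with Eq.~\eqref{colie}.} By definition,
\[
\delta(e_p\otimes b)=\Delta(e_p)\bullet\vartheta(b)-\tau\Delta(e_p)\bullet\tau\vartheta(b),
\]
with $\vartheta(b)=\sum_i\big(x_i\otimes y_i\circ b-(b\circ x_i+x_i\circ b)\otimes y_i\big)$. So we must show $X(b)=\vartheta(b)$ and $Y(b)=-\tau\vartheta(b)$. These identities need not hold on the nose; any discrepancy should be a combination of $F(b)(r-\tau(r))$ and its flip, which vanishes by Leib-invariance, exactly as in the last part of the proof of Proposition~\ref{pro:LYBE-CYBE}.

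\emph{Main obstacle.} Step 3 is the hard part: the sign bookkeeping in Step 2, and matching the leftover terms to precisely the two Leib-invariance identities. I would first regroup using the displayed identities for $\sum_j e_j\otimes f_j\cdot e_p$, $\sum_j e_p\cdot f_j\otimes e_j$ and $\sum_j f_j\otimes e_p\cdot e_j$ from that proof, then reduce the remainder using
\[
\sum_i\big(y_i\otimes b\circ x_i+b\circ y_i\otimes x_i-x_i\otimes b\circ y_i-b\circ x_i\otimes y_i\big)=0.
\]

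\emph{Triangular and factorizable cases.} If $(B,\circ,\vartheta)$ is triangular, $r$ is symmetric, so $\tilde r$ is skew-symmetric by Corollary~\ref{cor:sLYBE-sCYBE} and $(\g,[-,-],\delta)$ is triangular. For factorizability, write $\tilde r=\varphi\otimes r$ with $\varphi=\sum_j e_j\otimes f_j$. By Corollary~\ref{cor:sLYBE-sCYBE}, $\tau\varphi=-\varphi$, hence $\tau(\tilde r)=-\varphi\otimes\tau(r)$ and
\[
\tilde r+\tau(\tilde r)=\varphi\otimes\big(r-\tau(r)\big).
\]
Identifying $\g^*=A^*\otimes B^*$, the map $\tilde r^\sharp+\tau(\tilde r)^\sharp$ equals $\varphi^\sharp\otimes\mathcal I$. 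Here $\varphi^\sharp:A^*\to A$ is an isomorphism because $\kappa$ is nondegenerate, and $\mathcal I$ is invertible by the factorizable hypothesis. The tensor product of two isomorphisms is an isomorphism, so $(\g,[-,-],\delta)$ is factorizable.
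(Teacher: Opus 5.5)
\textbf{The route matches the paper's, but the central identity $\delta=\delta_{\tilde r}$ is false as stated; it holds only for $-\tilde r$.}

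Your route is the same as the paper's. You compare on $e_p\otimes b$, rewrite the $A$-tensors via nondegeneracy of $\widetilde{\kappa}$, and kill the remainder with Leib-invariance. The triangular case goes through Corollary~\ref{cor:sLYBE-sCYBE} and the factorizable case through $\tilde r^\sharp+\tau(\tilde r)^\sharp=r_0^\sharp\otimes\mathcal{I}$; that paragraph of yours is fine.

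The gap is the central identity, and it is not just bookkeeping. Your Step~2 formulas attach the wrong tensors, not merely wrong signs. From Eq.~\eqref{eq:q2zc}, $\kappa(f_j,e_i)=\delta_{ij}$, skew-symmetry and $\kappa(a_1\cdot a_2,a_3)=-\kappa(a_1,a_3\cdot a_2)$, one actually gets
\begin{align*}
&\sum_j a\cdot e_j\otimes f_j=\Delta(a),\qquad \sum_j e_j\otimes a\cdot f_j=-\tau\Delta(a),\\
&\sum_j e_j\cdot a\otimes f_j=-\sum_j e_j\otimes f_j\cdot a=-\Delta(a)-\tau\Delta(a).
\end{align*}
Substituting these into your Step~1 gives
\begin{equation*}
\big(\id\otimes\ad(a\otimes b)+\ad(a\otimes b)\otimes\id\big)(\tilde r)=-\Delta(a)\bullet\vartheta(b)+\tau\Delta(a)\bullet\sum_i\big(x_i\circ b\otimes y_i-x_i\otimes b\circ y_i-x_i\otimes y_i\circ b\big).
\end{equation*}
The last sum equals $\tau\vartheta(b)$ by the flip of $F(b)(r-\tau(r))=0$. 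Hence the coboundary of $\tilde r$ is $-\delta$, not $\delta$. In your notation $X(b)=-\vartheta(b)$ exactly. The resulting discrepancy $2\Delta(a)\bullet\vartheta(b)$ (total discrepancy $2\delta$) cannot be absorbed by any Leib-invariance identity, so Step~3 cannot be completed as planned.

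This is a sign error in the statement itself. The paper's proof hides it by asserting $\Delta(a)=-\sum_j a\cdot e_j\otimes f_j$, whereas Example~\ref{ex:qu-zib} shows directly that $\Delta(e_1)=\sum_j e_1\cdot e_j\otimes f_j$.

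Example~\ref{ex:tri-tensor} satisfies all the hypotheses ($r$ symmetric, $\mathbf{L}_r=0$) and gives a concrete counterexample. There $\delta(e_1\otimes x_2)=-3(e_2\otimes x_1)\otimes(e_3\otimes x_1)+3(e_3\otimes x_1)\otimes(e_2\otimes x_1)$. Applying $\id\otimes\ad(e_1\otimes x_2)+\ad(e_1\otimes x_2)\otimes\id$ to $\tilde r$ gives exactly the negative; only the two terms of $\tilde r$ containing $e_4\otimes x_2$ survive.

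The correct statement uses $-\tilde r=\sum_{i,j}(f_j\otimes x_i)\otimes(e_j\otimes y_i)$. The $\CYBE$ is homogeneous quadratic. Lie-invariance of the symmetric part, skew-symmetry, and invertibility of $\mathcal{I}$ are all unchanged under $\tilde r\mapsto-\tilde r$. So Proposition~\ref{pro:LYBE-CYBE} and your triangular and factorizable arguments transfer verbatim, and the quasi-triangular, triangular and factorizable conclusions survive. The claim that $\delta$ ``coincides with the coboundary through $\tilde r$'' cannot be proved as written.
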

\begin{proof}
	By Proposition~\ref{pro:sLib-bia}, $(B, \circ, \vartheta)$ is a Leibniz bialgebra.
	Then by Theorem~\ref{thm:liebia-LZ}, there is a Lie bialgebra structure $(\g, [-,-], \delta)$ on $\g$ where $\delta$ is defined by Eq.~\eqref{colie}.
	For all basis elements $e_{s}, e_{t}\in A$, we have
	\begin{equation*}
		\widetilde{\kappa}(e_{s}\otimes e_{t},\; \Delta(a)) = \kappa(e_{s},\; a\cdot e_{t}) = -\widetilde{\kappa}(e_{s}\otimes e_{t},\; \sum_{j} a\cdot e_{j} \otimes f_{j})
	\end{equation*}
	Since $\widetilde{\kappa}$ is nondegenerate, we obtain 
	\begin{equation*}
		\Delta(a) = -\sum_{j}a \cdot e_{j} \otimes f_{j}.
	\end{equation*}
	Similarly, we have 
	\begin{align*}
		&\sum_{j} e_{j} \otimes f_{j}\cdot a =-\sum_{j} e_{j}\cdot a \otimes f_{j}, \\
		&\sum_{(a)} \tau \Delta(a) = \sum_{j} e_{j} \otimes a \cdot f_{j} = \sum_{j} a \cdot e_{j} \otimes f_{j} +
		\sum_{j} e_{j} \cdot a \otimes f_{j}.
	\end{align*}
	Therefore, 
	\begin{align*}
		&\delta(a\otimes b) - (\id\otimes\ad_{\g}(a\otimes b)
		+\ad_{\g}(a\otimes b)\otimes\id)(\sum_{i, j}(e_{j}\otimes x_{i})\otimes(f_{j}\otimes y_{i}))\\
		&=\sum_{i}\Big(
		\Delta(a) \bullet (x_{i}\otimes y_{i} \circ b )
		-\Delta(a) \bullet ( b \circ x_{i}\otimes y_{i}) - \Delta(a) \bullet (x_{i} \circ b \otimes y_{i} ) \\[-5mm]
		&\qquad\qquad - \tau \Delta(a) \bullet ( y_{i} \circ b \otimes x_{i} ) + \tau \Delta(a) \bullet (y_{i}\otimes b \circ x_{i} )
		+\tau \Delta(a)\bullet(y_{i}\otimes x_{i} \circ b)\Big) \\
		&\quad- \sum_{i,j}\Big((e_{j}\otimes a \cdot f_{j})\bullet(x_{i}\otimes b \circ y_{i} )
		-(e_{j}\otimes f_{j}\cdot a )\bullet(x_{i}\otimes y_{i} \circ b )\\[-5mm]
		&\qquad\quad+(a \cdot e_{j}\otimes f_{j})\bullet(b \circ x_{i} \otimes y_{i})
		-( e_{j}\cdot a \otimes f_{j})\bullet( x_{i} \circ b\otimes y_{i})\Big) \\
		&=\; \tau\Delta(a)\bullet
		\Big(\sum_{i} x_{i} \circ b \otimes y_{i}-x_{i}\otimes y_{i} \circ b +y_{i}\otimes
		 b \circ x_{i}+y_{i}\otimes x_{i} \circ b
		- y_{i} \circ b \otimes x_{i}-x_{i}\otimes b \circ y_{i} \Big).
	\end{align*}
 	Note that $r-\tau(r)$ is Leib-invariant, i.e.,
 	\begin{equation*}
 		\sum_{i} b \circ x_{i} \otimes y_{i}+ x_{i} \circ b 
 		\otimes y_{i}-x_{i}\otimes y_{i} \circ b - b \circ y_{i} \otimes x_{i}- y_{i} \circ b
 		\otimes x_{i}+y_{i}\otimes x_{i} \circ b =0.
 	\end{equation*}
 	Thus, $\delta(a\otimes b) = (\id\otimes\ad_{\g}(a\otimes b)
 	+\ad_{\g}(a\otimes b)\otimes\id)(\tilde{r})$.
 	In particular, if $(B, \circ, \vartheta)$ is triangular, then we show that $(A\otimes B, [-,-], \delta)$ is triangular by Corollary
 	\ref{cor:sLYBE-sCYBE}. 
 	
 	Now suppose that $(B, \circ, \vartheta)$ is factorizable. 
 	Immediately, we have $\mathcal{I}=r^{\sharp} -\tau(r)^{\sharp}$ is an isomorphism of vector spaces.
	To prove $(\g, [-,-], \delta)$ is factorizable, it is sufficient to show that the map $\tilde{\mathcal{I}}=\tilde{r}^{\sharp} +\tau(\tilde{r})^{\sharp}: \g^{\ast}\rightarrow \g$ is an isomorphism of vector spaces. 
	Denote $r_{0}:=\sum_{j}e_{j}\otimes
	f_{j}\in A\otimes A$. 
	It is easy to see that $r_{0}^{\sharp}$ is
	a linear isomorphism, $\tilde{r}^{\sharp}=r_{0}^{\sharp}\otimes r^{\sharp}$ and $\tau(\tilde{r})^{\sharp}=-r_{0}^{\sharp}\otimes\tau(r)^{\sharp}$. 
	Therefore, $\tilde{\mathcal{I}}
	=\tilde{r}^{\sharp}+\tau(\tilde{r})^{\sharp}=r_{0}^{\sharp}\otimes(r^{\sharp}
	-\tau(r)^{\sharp})=r_{0}^{\sharp}\otimes\mathcal{I}$ is an isomorphism of vector
	spaces. 
\end{proof}

In \cite{Kup}, Kupershmidt found that the $\CYBE$ in tensor form on Lie algebras can be converted into an $\mathcal{O}$-operator associated to the coadjoint representation.
Let $(\g, [-,-])$ be a Lie algebra. 
Recall that a representation of $(\g, [-,-])$ is a pair $(V, \rho)$, where $V$ is a vector space and $\rho: \g\rightarrow \End(V)$ is a linear map such that $\rho([g_{1}, g_{2}])=\rho(g_{1})\rho(g_{2})-\rho(g_{2})\rho(g_{1})$ for all $g_{1}, g_{2}\in\g$. 
A linear map $\mathcal{T}: V\rightarrow\g$ is called an
{\bf $\mathcal{O}$-operator of $(\g, [-,-])$ associated to $(V, \rho)$} if
\begin{equation*}
	[\mathcal{T}(v_{1}), \mathcal{T}(v_{2})]
	=\mathcal{T}\big(\rho(\mathcal{T}(v_{1}))(v_{2})-\rho(\mathcal{T}(v_{2}))(v_{1})\big), \;\; \forall v_{1}, v_{2}\in V.
\end{equation*}

\begin{pro}[\cite{CP,Kup}]\label{pro:o-lie}
Let $(\g, [-,-])$ be a Lie algebra and $r\in\g\otimes\g$ be skew-symmetric.
Then $r$ is a solution of the $\CYBE$ in $(\g, [-,-])$ if and only if $r^{\sharp}$
is an $\mathcal{O}$-operator of $(\g, [-,-])$ associated to the coadjoint
representation $(\g^{\ast}, -\ad^{\ast})$.
\end{pro}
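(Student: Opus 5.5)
This is the classical Kupershmidt correspondence, so the plan is to translate the $\CYBE$ into an identity on $\g^{*}$ by pairing with arbitrary covectors. Write $r=\sum_i x_i\otimes y_i$, with $\langle r^{\sharp}(\xi),\eta\rangle=\langle \xi\otimes\eta, r\rangle$. Skew-symmetry of $r$ gives $r^{\sharp}(\xi)=\sum_i\langle\xi,x_i\rangle y_i=-\sum_i\langle \xi,y_i\rangle x_i$, which will let us move each slot of $r$ to whichever side is convenient.

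The key step is to compute, for $\xi,\eta,\zeta\in\g^{*}$, the three pairings $\langle \xi\otimes\eta\otimes\zeta,\,[r_{12},r_{13}]\rangle$, $\langle \xi\otimes\eta\otimes\zeta,\,[r_{12},r_{23}]\rangle$ and $\langle \xi\otimes\eta\otimes\zeta,\,[r_{13},r_{23}]\rangle$ as expressions in $r^{\sharp}$.

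For the first term, the bracket sits in slot one. Pairing slots two and three with $\eta,\zeta$ yields
\[
\langle \xi,[r^{\sharp}(\eta),r^{\sharp}(\zeta)]\rangle ,
\]
up to signs coming from the skew-symmetry rewriting.

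For the second term, $[r_{12},r_{23}]=\sum x_i\otimes[y_i,x_j]\otimes y_j$. Pairing gives $\langle \eta,[r^{\sharp}(\xi),\,\cdot\,]\rangle$ evaluated with the third slot contracted against $\zeta$. This equals
\[
\langle \ad^{*}(r^{\sharp}(\xi))\eta,\ r^{\sharp}(\zeta)\rangle\quad\text{(up to sign)},
\]
which we rewrite as $\langle \xi,\ r^{\sharp}(\ad^{*}(r^{\sharp}(\zeta))\eta)\rangle$ by skew-symmetry of $r$.

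The third term is treated in the same way and gives the analogous expression with $\eta$ and $\zeta$ interchanged.

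Summing the three contributions should give
\[
\langle \xi\otimes\eta\otimes\zeta,\ \mathbf C_r\rangle=\big\langle \xi,\ [r^{\sharp}(\eta),r^{\sharp}(\zeta)]-r^{\sharp}\big(-\ad^{*}(r^{\sharp}(\eta))\zeta+\ad^{*}(r^{\sharp}(\zeta))\eta\big)\big\rangle .
\]
Since $\xi$ is arbitrary and the pairing is nondegenerate, $\mathbf C_r=0$ holds if and only if $r^{\sharp}$ satisfies the $\mathcal O$-operator identity for $\rho=-\ad^{*}$. The conventions are $\langle \ad^{*}(g)\eta,h\rangle=\langle\eta,[g,h]\rangle$ and $\rho(\mathcal T(\eta))\zeta-\rho(\mathcal T(\zeta))\eta$.

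The main obstacle is purely sign bookkeeping: which slot of $r$ is contracted, and the sign each use of skew-symmetry introduces. To avoid errors, I would first record the two identities
\[
\sum_i \langle \xi,x_i\rangle y_i=r^{\sharp}(\xi),\qquad \sum_i\langle\xi,y_i\rangle x_i=-r^{\sharp}(\xi),
\]
and then expand each term with explicit indices before collecting. If the overall sign comes out reversed, this still gives the equivalence, since the identity in question is homogeneous.
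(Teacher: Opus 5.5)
Your proposal is correct, and no sign ambiguity actually remains. With $\langle \ad^{*}(g)\eta,h\rangle=\langle\eta,[g,h]\rangle$ and $\langle \alpha, r^{\sharp}(\xi)\rangle=-\langle \xi, r^{\sharp}(\alpha)\rangle$ (which follows from skew-symmetry of $r$), the three pairings are exactly
$\langle\xi,[r^{\sharp}(\eta),r^{\sharp}(\zeta)]\rangle$,
$-\langle\xi,r^{\sharp}(\ad^{*}(r^{\sharp}(\zeta))\eta)\rangle$
and
$+\langle\xi,r^{\sharp}(\ad^{*}(r^{\sharp}(\eta))\zeta)\rangle$.
Their sum is your displayed identity, namely $[T\eta,T\zeta]-T(\rho(T\eta)\zeta-\rho(T\zeta)\eta)$ with $T=r^{\sharp}$ and $\rho=-\ad^{*}$, so your "up to sign" hedges can simply be dropped.

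The paper gives no proof of its own here; it only cites Chari--Pressley and Kupershmidt. Your pairing computation is the standard argument from those sources.
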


Let $(B, \circ)$ be a Leibniz algebra and $(V, \kl, \kr)$ be a representation of $(B, \circ)$. 
Recall that a linear map $\mathcal{T}: V\rightarrow B$ is called an {\bf $\mathcal{O}$-operator of $(B, \circ)$ associated to $(V, \kl, \kr)$} if 
\begin{equation*}
	\mathcal{T}(v_{1}) \circ \mathcal{T}(v_{2})=
	\mathcal{T}\big(\kl(\mathcal{T}(v_{1}))(v_{2})+\kr(\mathcal{T}(v_{2}))(v_{1})\big), \;\; \forall v_{1}, v_{2}\in V.
\end{equation*}

\begin{pro}[\cite{BLST,TS}]\label{pro:o-leib}
Let $(B, \circ)$ be a Leibniz algebra and $r\in B\otimes B$ be symmetric.
Then $r$ is a solution of the $\LYBE$ in $(B, \circ)$ if and only if $r^{\sharp}: B^{\ast}\rightarrow B$ is an $\mathcal{O}$-operator of $(B, \circ)$ associated to the coregular representation $(B^{\ast}, -\fl_{\circ}^{\ast},
\fl_{\circ}^{\ast}+\fr_{\circ}^{\ast})$.
\end{pro}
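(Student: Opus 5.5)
The plan is a direct dual computation. I would rewrite the $\mathcal{O}$-operator identity for $T:=r^{\sharp}$ as a trilinear identity in $\xi_{1},\xi_{2},\xi_{3}\in B^{\ast}$, and then identify that identity with the pairing of $\mathbf{L}_{r}$ against a suitably permuted $\xi$-tensor. Since the pairing $B^{\ast}\otimes B^{\ast}\otimes B^{\ast}\times B\otimes B\otimes B\to\Bbbk$ is nondegenerate, $\mathbf{L}_{r}=0$ will then be equivalent to the $\mathcal{O}$-operator identity holding for all $\xi_{1},\xi_{2}$.

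First, write $r=\sum_{i}x_{i}\otimes y_{i}$. By symmetry $r=\sum_i y_i\otimes x_i$, so
\begin{equation*}
T(\xi)=\sum_i\langle \xi,x_i\rangle y_i=\sum_i\langle\xi,y_i\rangle x_i,
\qquad
\langle T(\xi),\eta\rangle=\langle \xi, T(\eta)\rangle .
\end{equation*}
Pair both sides of the required identity
\begin{equation*}
T(\xi_{1})\circ T(\xi_{2})=T\big(-\fl_{\circ}^{\ast}(T\xi_{1})\xi_{2}+(\fl_{\circ}^{\ast}+\fr_{\circ}^{\ast})(T\xi_{2})\xi_{1}\big)
\end{equation*}
with $\xi_{3}$. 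Using the self-adjointness of $T$ and the convention $\langle\mu^{\ast}(a)v^{\ast},u\rangle=\langle v^{\ast},\mu(a)u\rangle$, the right-hand side becomes
\begin{equation*}
-\langle\xi_{2},\,T\xi_{1}\circ T\xi_{3}\rangle+\langle\xi_{1},\,T\xi_{2}\circ T\xi_{3}\rangle+\langle\xi_{1},\,T\xi_{3}\circ T\xi_{2}\rangle .
\end{equation*}
The left-hand side is $\langle\xi_{3},T\xi_{1}\circ T\xi_{2}\rangle$. So the $\mathcal{O}$-operator condition is equivalent to the vanishing of a sum of four terms, each of the form $\langle\xi_{\sigma(1)},T\xi_{\sigma(2)}\circ T\xi_{\sigma(3)}\rangle$.

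Second, expand each of the four terms of $\mathbf{L}_{r}=r_{12}\circ r_{13}-r_{12}\circ r_{23}-r_{23}\circ r_{12}+r_{23}\circ r_{13}$ paired with $\eta_{1}\otimes\eta_{2}\otimes\eta_{3}$. For example, $\langle\eta_1\otimes\eta_2\otimes\eta_3,r_{12}\circ r_{13}\rangle=\sum_{i,j}\langle\eta_1,x_i\circ x_j\rangle\langle\eta_2,y_i\rangle\langle\eta_3,y_j\rangle=\langle\eta_{1},T\eta_{2}\circ T\eta_{3}\rangle$, where symmetry of $r$ is used to put $T$ on the correct legs. The remaining terms are handled the same way:
\begin{align*}
\langle\eta_1\otimes\eta_2\otimes\eta_3,\,r_{12}\circ r_{23}\rangle&=\langle\eta_{2},T\eta_{1}\circ T\eta_{3}\rangle,\\
\langle\eta_1\otimes\eta_2\otimes\eta_3,\,r_{23}\circ r_{12}\rangle&=\langle\eta_{2},T\eta_{3}\circ T\eta_{1}\rangle,\\
\langle\eta_1\otimes\eta_2\otimes\eta_3,\,r_{23}\circ r_{13}\rangle&=\langle\eta_{3},T\eta_{2}\circ T\eta_{1}\rangle.
\end{align*}
Then choose the relabelling of $(\eta_{1},\eta_{2},\eta_{3})$ in terms of $(\xi_{1},\xi_{2},\xi_{3})$ that makes these four terms, with their signs, match the four terms obtained in the first step. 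I expect $\eta_{1}=\xi_{2}$, $\eta_{2}=\xi_{1}$, $\eta_{3}=\xi_{3}$ or a similar transposition, possibly up to an overall sign.

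The main obstacle is bookkeeping: the placement of the legs and the dual-map sign conventions have to line up exactly, and the term $\langle\xi_{3},T\xi_{1}\circ T\xi_{2}\rangle$ has to pair with the correct $r_{23}\circ r_{13}$-type term. If a direct match fails, I would first check whether the match requires $r_{23}\circ r_{12}$ rewritten via symmetry of $r$ as a term with the product in the other order. Once the four-term correspondence is fixed, nondegeneracy of the pairing gives both implications at once.
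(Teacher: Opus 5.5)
Your proof is correct, and it is the standard direct dual computation behind this result (the paper itself gives no proof and simply cites Bai--Liu--Sheng--Tang and Tang--Sheng); your formula for the right-hand side and all four pairings with $\mathbf{L}_r$ are right. The relabelling you guessed works exactly, with no sign: for $\eta_1=\xi_2$, $\eta_2=\xi_1$, $\eta_3=\xi_3$ you get $\langle \xi_2\otimes\xi_1\otimes\xi_3,\mathbf{L}_r\rangle=\langle\xi_3,T\xi_1\circ T\xi_2\rangle+\langle\xi_2,T\xi_1\circ T\xi_3\rangle-\langle\xi_1,T\xi_2\circ T\xi_3\rangle-\langle\xi_1,T\xi_3\circ T\xi_2\rangle$, which is precisely the $\xi_3$-pairing of the difference of the two sides of the $\mathcal{O}$-operator identity, so nondegeneracy of the pairing gives both implications.
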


With the same assumptions in Theorem~\ref{thm:indu-sLiebia-lei}.
By the proof of Theorem~\ref{thm:indu-sLiebia-lei},
we have the following commutative diagram:
\begin{equation*}
	\xymatrix@C=3cm@R=0.5cm{
		\txt{$r$ \\ {\tiny a symmetric solution} \\ {\tiny of the $\LYBE$ in $(B, \circ)$}}
		\ar[d]_-{{\rm Pro.}~\ref{pro:LYBE-CYBE}}\ar[r]^-{{\rm Pro.}~\ref{pro:o-leib}} &
		\txt{$r^{\sharp}$\\ {\tiny an $\mathcal{O}$-operator of $(B, \circ)$} \\
			{\tiny associated to $(B^{\ast}, -\fl_{\circ}^{\ast}, \fr_{\circ}^{\ast}+\fl_{\circ}^{\ast})$}}
		\ar[d]^-{\mbox{$r_{0}^{\sharp}\otimes-$}} \\
		\txt{$\tilde{r}$ \\ {\tiny a skew-symmetric solution} \\ {\tiny of the $\CYBE$ in
				$(A\otimes B, [-,-])$}} \ar[r]^-{{\rm Pro.}~\ref{pro:o-lie}}
		& \txt{$\tilde{r}^{\sharp}=r_{0}^{\sharp}\otimes r^{\sharp}$ \\
			{\tiny an $\mathcal{O}$-operator of $(A\otimes B, [-,-])$ } \\
			{\tiny associated to $((A\otimes B)^{\ast}, -\ad^{\ast})$}}}
\end{equation*}

At the end of this section, we give an example illustrating the construction of a triangular Lie bialgebra arising from a triangular Leibniz bialgebra.

\begin{ex}\label{ex:tri-tensor}
Let $(A, \cdot, \kappa)$ be the quadratic Zinbiel algebra given in Example~\ref{ex:qu-zib}. 
Clearly, $\{e_{3}$, $e_{4}$, $-e_{1}$, $-e_{2}\}$ is the dual basis of $\{e_{1}$, $e_{2}$, $e_{3}$, $e_{4}\}$ with respect to $\kappa$.
Let $(B, \circ)$ be a 2-dimensional Leibniz algebra with a basis $\{x_{1}, x_{2}\}$ whose non-zero product $\circ$ is given by $x_{2} \circ x_{2}=x_{1}$. 
Moreover $(B, \circ, \vartheta)$ is a triangular Leibniz bialgebra associated with $r=x_{1}\otimes x_{2}+x_{2}\otimes x_{1}$, explicitly $\vartheta$ is defined by $\vartheta(x_{1})=0$
and $\vartheta(x_{2})=-x_{1}\otimes x_{1}$. 
By Theorem \ref{thm:liebia-LZ}, we get a Lie bialgebra $(A\otimes B, [-,-], \delta)$, where the non-zero product $[-,-]$ and
coproduct $\delta$ are given by
\begin{align*}
&\qquad -[e_{4}\otimes x_{2} x_{2},\; e_{1}\otimes] = [e_{1}\otimes x_{2},\; e_{4}\otimes x_{2}]
=3e_{3}\otimes x_{1}-3e_{2}\otimes x_{1},\\
&\delta(e_{1}\otimes x_{2})=-3(e_{2}\otimes x_{1})\otimes(e_{3}\otimes x_{1})
+3(e_{3}\otimes x_{1})\otimes(e_{2}\otimes x_{1})=\delta(e_{4}\otimes x_{2}),
\end{align*}
which is a triangular Lie bialgebra associated with 
\begin{align*}
\tilde{r}:=&(e_{1}\otimes x_{1})\otimes(e_{3}\otimes x_{2})
+(e_{1}\otimes x_{2})\otimes(e_{3}\otimes x_{1})
+(e_{2}\otimes x_{1})\otimes(e_{4}\otimes x_{2})
+(e_{2}\otimes x_{2})\otimes(e_{4}\otimes x_{1})\\[-1mm]
&\quad -(e_{3}\otimes x_{1})\otimes(e_{1}\otimes x_{2})
-(e_{3}\otimes x_{2})\otimes(e_{1}\otimes x_{1})
-(e_{4}\otimes x_{1})\otimes(e_{2}\otimes x_{2})
-(e_{4}\otimes x_{2})\otimes(e_{2}\otimes x_{1}).
\end{align*}
Moreover, $r$ induces an $\mathcal{O}$-operator $r^{\sharp}: B^{\ast}\rightarrow B$,
which is given by $r^{\sharp}(\eta_{1})=x_{2}$ and $r^{\sharp}(\eta_{2})=x_{1}$, where
$\eta_{1}, \eta_{2}\in B^{\ast}$ is the dual basis of $x_{1}, x_{2}$. 
Denote $r_{0}:=e_{1}\otimes e_{3}+e_{2}\otimes e_{4}-e_{3}\otimes e_{1}-e_{4}\otimes e_{2}$.
The linear map $r_{0}^{\sharp}: A^{\ast}\rightarrow A$ is given by $r_{0}^{\sharp}(\xi_{1})=e_{3}$,
$r_{0}^{\sharp}(\xi_{2})=e_{2}$, $r_{0}^{\sharp}(\xi_{3})=-e_{1}$ and $r_{0}^{\sharp}(\xi_{4})=-e_{2}$. 
Then, $r_{0}^{\sharp}\otimes r^{\sharp}$ is the $\mathcal{O}$-operator $\tilde{r}^{\sharp}: (A\otimes B)^{\ast} \rightarrow(A\otimes B)$ induced by $\tilde{r}$ given by
\begin{align*}
	&\tilde{r}^{\sharp}(\xi_{1}\otimes\eta_{1})=e_{3}\otimes x_{2},\;
	&&\tilde{r}^{\sharp}(\xi_{1}\otimes\eta_{2})=e_{3}\otimes x_{1},\;
	&&\tilde{r}^{\sharp}(\xi_{2}\otimes\eta_{1})=e_{4}\otimes x_{2},\;
	&&\tilde{r}^{\sharp}(\xi_{2}\otimes\eta_{2})=e_{4}\otimes x_{1},\\
	&\tilde{r}^{\sharp}(\xi_{3}\otimes\eta_{1})=-e_{1}\otimes x_{2},
	&&\tilde{r}^{\sharp}(\xi_{3}\otimes\eta_{2})=-e_{1}\otimes x_{1},
	&&\tilde{r}^{\sharp}(\xi_{4}\otimes\eta_{1})=-e_{2}\otimes x_{2},
	&&\tilde{r}^{\sharp}(\xi_{4}\otimes\eta_{2})=-e_{2}\otimes x_{1},
\end{align*}
where $\xi_{1}, \xi_{2}, \xi_{3}, \xi_{4}\in A^{\ast}$ is the dual basis of $e_{1}, e_{2},
e_{3}, e_{4}$. 
\end{ex}

\smallskip
%%%%%%%%%%%%%%%%%%%%%%%%%%%%%%%%%%%%%%%%%%%%%%%%%%%%%%%%%%%%%%%%%%%%%%%%%%%%%%%%%
%    section  4  Infinite-dimensional Lie bialgebras
%%%%%%%%%%%%%%%%%%%%%%%%%%%%%%%%%%%%%%%%%%%%%%%%%%%%%%%%%%%%%%%%%%%%%%%%%%%%%%%%%%%%%%

\section{Infinite-dimensional Lie bialgebras from Zinbiel bialgebras}\label{sec:inf-lie}
In this section, we construct an infinite-dimensional Lie bialgebra from a finite-dimensional Zinbiel bialgebra and an infinite-dimensional quadratic Leibniz algebra.
We show that a symmetric solution of the Zinbiel Yang-Baxter equation in a Zinbiel algebra gives rise to a skew-symmetric completed solution of the classical Yang-Baxter equation in the induced infinite-dimensional Lie algebra. 
Moreover, we present a construction of quasi-Frobenius $\bz$-graded Lie algebras from quasi-Frobenius Zinbiel algebras.

\begin{defi}[\cite{Wan}]\label{def:Zinbiel-bi}
	A {\bf Zinbiel bialgebra} is a triple $(A, \cdot, \Delta)$, where $(A, \cdot)$ is a Zinbiel algebra and $(A, \Delta)$ is a Zinbiel coalgebra satisfying
	\begin{align}
	&\Delta(a_{1}\cdot a_{2})+\Delta(a_{2}\cdot a_{1})
	=(\id\otimes(\fl_{\cdot}+\fr_{\cdot})(a_{2}))\Delta(a_{1})
	+(\fl_{\cdot}(a_{1})\otimes\id)\Delta(a_{2}),                 \label{zbialg1}\\
	&\Delta(a_{1}\cdot a_{2})+\tau\Delta(a_{1}\cdot a_{2})
	=(\id\otimes\fr_{\cdot}(a_{2}))\Delta(a_{1})
	+(\fl_{\cdot}(a_{1})\otimes\id)(\Delta(a_{2})+\tau\Delta(a_{2})),  \;\; \forall a_{1}, a_{2} \in A.\label{zbialg2}
	\end{align}
\end{defi}

Zinbiel bialgebras admit equivalent characterizations in terms of Manin triples of Zinbiel algebras and matched pairs of Zinbiel algebras.

\begin{thm}[\cite{Wan}]\label{thm:equat}
	Let $(A, \cdot)$ be a Zinbiel algebra and $\Delta: A \rightarrow A\otimes A$ be a linear map such that $\circ = \Delta^{\ast}: A^{\ast}\otimes A^{\ast} \rightarrow A^{\ast}$ defines a Zinbiel algebra structure on $A^{\ast}$.
	Then the following conditions are equivalent:
	\begin{enumerate}[(i)]
		\item
		$(A, \cdot, \Delta)$ is a Zinbiel bialgebra;
		
		\item
		$(A, A^{\ast}, \fl_{\cdot}^{\ast}+\fr_{\cdot}^{\ast}, -\fr_{\cdot}^{\ast}, \fl_{\circ}^{\ast}+\fr_{\circ}^{\ast}, -\fr_{\circ}^{\ast})$ is a matched pair of Zinbiel algebras;
		
		\item
		$((A\oplus A^{\ast}, \diamond, \mathcal{B}_{d}),\; A,\; A^{\ast})$ is a (standard) Manin triple of Zinbiel algebras.
	\end{enumerate}
\end{thm}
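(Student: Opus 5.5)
The plan is to prove the cycle (i)$\Leftrightarrow$(ii)$\Leftrightarrow$(iii) in the standard way used for Lie, Leibniz and perm bialgebras. Before doing so, I will fix the notion of a matched pair of Zinbiel algebras. This is a pair of Zinbiel algebras $(A,\cdot)$ and $(A^*,\circ)$, each a representation of the other, subject to compatibility conditions. These conditions are exactly the ones guaranteeing that
\begin{equation*}
(a+\xi)\diamond(b+\eta)=a\cdot b+(\fl_{\circ}^*+\fr_{\circ}^*)(\xi)b-\fr_{\circ}^*(\eta)a+\xi\circ\eta+(\fl_{\cdot}^*+\fr_{\cdot}^*)(a)\eta-\fr_{\cdot}^*(b)\xi
\end{equation*}
is a Zinbiel product on $A\oplus A^*$. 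This reformulation is the general semi-direct/matched-pair principle: expand the Zinbiel identity for three mixed arguments and sort the terms into the $A$- and $A^*$-components.

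\textbf{(ii)$\Leftrightarrow$(iii).} The plan has two parts.

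First, since (ii) is equivalent to $(A\oplus A^*,\diamond)$ being Zinbiel, it remains to check two facts:
\begin{itemize}
\item $A$ and $A^*$ are subalgebras that are isotropic for $\mathcal{B}_d$ (which is clear);
\item $\mathcal{B}_d$ is invariant for $\diamond$, i.e.\ $\mathcal{B}_d(x\diamond y,z)=\mathcal{B}_d(y,x\diamond z+z\diamond x)$.
\end{itemize}
Invariance is verified directly on homogeneous elements, using the definitions of the dual maps. The coregular-representation formulas $(\fl^*+\fr^*,-\fr^*)$ are tailored precisely so that this identity holds.

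Second, for the converse, invariance of $\mathcal{B}_d$ together with isotropy forces the mixed products in any Manin triple structure on $A\oplus A^*$ to be exactly the ones displayed above. Hence the Zinbiel identity of $\diamond$ gives the matched pair.

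\textbf{(i)$\Leftrightarrow$(ii).} This is the main computational step. I will write out the matched-pair compatibility conditions. There are four of them, two for each side, but those on the $A^*$ side are dual transposes of those on the $A$ side. I will then pair each condition against $\xi\otimes\eta$, with $\xi,\eta\in A^*$, via the canonical pairing, and rewrite everything in terms of $\Delta$ using $\langle \xi\circ\eta,a\rangle=\langle\xi\otimes\eta,\Delta(a)\rangle$. The target is that one pair of conditions becomes Eq.~\eqref{zbialg1} and the other becomes Eq.~\eqref{zbialg2}, the latter after symmetrizing with $\tau$.

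The main obstacle is the bookkeeping. The expressions $\fl^*+\fr^*$ and $-\fr^*$ produce many terms, and it is easy to mismatch $\tau$'s. I would first test the identification on the simplest condition, the one involving $\fr_{\circ}^*(\fr_{\cdot}^*(a)\xi)$, to fix the sign conventions. I would then check that the remaining conditions are either these two or consequences of them together with the Zinbiel (co)identities, mirroring Aguiar's treatment of dendriform/infinitesimal bialgebras. If an extra condition appears to survive, I would show that it follows by adding \eqref{zbialg1} to \eqref{zbialg2} and using left-commutativity, i.e.\ the commutativity of $a\cdot b+b\cdot a$.
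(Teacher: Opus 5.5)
Your route is the standard one and it works. The paper gives no proof of this theorem; it quotes \cite{Wan}. However, several of your structural predictions for (i)$\Leftrightarrow$(ii) are wrong, and they are exactly where the direction (i)$\Rightarrow$(ii) is decided.

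\textbf{The count.} There are three compatibility conditions on each side, not two. On the $A$-side they come from the $A$-components of the triples $(a,b,\xi)$, $(a,\xi,b)$, $(\xi,a,b)$. On the $A^*$-side they come from the $A^*$-components of $(\xi,\eta,a)$, $(\xi,a,\eta)$, $(a,\xi,\eta)$.

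\textbf{What each condition gives.} Pair the $A$-valued ones with $\eta$ and the $A^*$-valued ones with $b$.
\begin{itemize}
\item $(a,b,\xi)$ is exactly \eqref{zbialg1}.
\item $(\xi,a,b)$ is exactly \eqref{zbialg2}. The factor $\id+\tau$ is produced automatically by $\fl_\circ^*+\fr_\circ^*$, so no extra symmetrizing is needed.
\item $(a,\xi,b)$ has the same right-hand side as $(\xi,a,b)$. It reduces to $(\id-\tau)$ applied to the difference of the two sides of \eqref{zbialg2}.
\end{itemize}
On the dual side the correspondence is crossed.
\begin{itemize}
\item $(\xi,\eta,a)$, the role-swapped partner of $(a,b,\xi)$, gives \eqref{zbialg2}.
\item $(a,\xi,\eta)$ gives \eqref{zbialg1}.
\item $(\xi,a,\eta)$ gives the antisymmetrization of \eqref{zbialg1} under $a_1\leftrightarrow a_2$.
\end{itemize}

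\textbf{Consequences for your plan.} The $A^*$-side conditions are not dual copies of their partners. What you actually have to check is that \eqref{zbialg1} for the dual triple $(A^*,\circ,\cdot^*)$ is \eqref{zbialg2} for $(A,\cdot,\Delta)$, and vice versa. The surplus conditions also do not come from adding \eqref{zbialg1} to \eqref{zbialg2} and using left-commutativity. Each is an antisymmetrization of a single one of the two equations, so it is implied by that equation alone. Finally, the condition you meant to use to calibrate signs (the one containing $\fr_\circ^*(\fr_\cdot^*(b)\xi)a$) is precisely the redundant $(a,\xi,b)$ condition. It will not match either bialgebra equation exactly.

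\textbf{On (ii)$\Leftrightarrow$(iii).} Say explicitly that $\mathcal{B}_d$ is the skew-symmetric pairing $\mathcal{B}_d(a+\xi,b+\eta)=\langle\xi,b\rangle-\langle\eta,a\rangle$. This matches the skew-symmetric quadratic Zinbiel algebras used in the paper. The symmetric pairing of Eq.~\eqref{eq:smp} is not invariant for your product $\diamond$. For the triple $(\xi,a_1,a_2)$, the two sides of the invariance identity come out as $-\langle\xi,a_2\cdot a_1\rangle$ and $+\langle\xi,a_2\cdot a_1\rangle$. Skew-symmetry together with invariance gives $\mathcal{B}_d(x\diamond y,z)=-\mathcal{B}_d(x,z\diamond y)$. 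That identity is the cleanest way to recover the $A$-component $-\fr_\circ^*(\xi)a$ of $a\diamond\xi$ in the converse direction.
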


Recall that a Zinbiel bialgebra $(A, \cdot, \Delta)$ is called {\bf coboundary} if there exists $r \in A\otimes A$ such that the coproduct $\Delta$ is given by
\begin{equation}
	\Delta(a)=\big(\id\otimes(\fl_{\cdot}+\fr_{\cdot})(a) - \fl_{\cdot}(a)\otimes\id\big)(r),  \;\; \forall a \in A. \label{cobou}
\end{equation}

\begin{defi}[\cite{Wan}]\label{def:ZYBE}
	Let $(A, \cdot)$ be a Zinbiel algebra and $r\in A\otimes A$. 
	The equation
	\begin{equation*}
		\mathbf{Z}_{r}:=r_{13}\cdot r_{21}+r_{21}\cdot r_{13}+r_{12}\cdot r_{23}+r_{23}\cdot r_{12}
		-r_{13}\cdot r_{23}-r_{23}\cdot r_{13}-r_{13}\cdot r_{12}-r_{23}\cdot r_{21}=0
	\end{equation*}
	is called the (classical) {\bf Zinbiel Yang-Baxter equation ($\ZYBE$)} in $(A, \cdot)$.
\end{defi}

\begin{defi}\label{def:Z-inv}
	Let $(A, \cdot)$ be a Zinbiel algebra.
	An element $r \in A\otimes A$ is called {\bf Zinb-invariant} if
	\begin{equation*}
		\big(\id\otimes(\fl_{\cdot}+\fr_{\cdot})(a)-\fl_{\cdot}(a)\otimes\id\big)(r)=0, \;\; \forall a \in A.
	\end{equation*}
\end{defi}

\begin{pro}[\cite{Wan}]\label{pro:qtr-zib}
	Let $(A, \cdot)$ be a Zinbiel algebra, $r\in A\otimes A$ and $\Delta: A \rightarrow A\otimes A$ be a linear map defined by Eq.~\eqref{cobou}.
	If $r$ is a solution of the $\ZYBE$ in $(A, \cdot)$ and $r-\tau(r)$ is Zinb-invariant, then $(A, \cdot, \Delta)$ is a Zinbiel bialgebra, which is called a {\bf quasi-triangular} Zinbiel bialgebra associated with $r$.
	In particular, if $r$ is a symmetric solution of the $\ZYBE$ in $(A, \cdot)$, then $(A, \cdot, \Delta)$ is a Zinbiel bialgebra, which is called a {\bf triangular} Zinbiel bialgebra associated with $r$.
\end{pro}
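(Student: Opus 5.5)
The approach is to verify the two defining conditions directly. Write $L(a):=\id\otimes(\fl_{\cdot}+\fr_{\cdot})(a)-\fl_{\cdot}(a)\otimes\id$, so that $\Delta(a)=L(a)(r)$. We must show three things: $(A,\Delta)$ is a Zinbiel coalgebra, and both compatibility conditions \eqref{zbialg1} and \eqref{zbialg2} hold. The natural order is to treat the compatibility conditions first and the coassociativity-type identity last, since the latter is where $\mathbf{Z}_r$ enters.

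\textbf{The compatibility conditions.} Substitute $\Delta=L(-)(r)$ into \eqref{zbialg1} and \eqref{zbialg2}. Then expand both sides using only the Zinbiel identity and its consequences, namely left commutativity and the commutative associativity of $a\diamond b=a\cdot b+b\cdot a$. The terms should cancel except for expressions of the form $(\ast)\,L(a_i)(r-\tau(r))$, obtained after applying $\tau$ or $\fl_{\cdot}(a_j)\otimes\id$, $\id\otimes\fr_{\cdot}(a_j)$ to them. These vanish because $r-\tau(r)$ is Zinb-invariant.

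In practice, \eqref{zbialg1} should hold identically for any $r$, as is typical of coboundary constructions; the operators $\fl_{\cdot}$ and $\fl_{\cdot}+\fr_{\cdot}=\fl_\diamond$ behave like a representation of $(A,\diamond)$ together with the left-commutative action. Condition \eqref{zbialg2}, which involves $\tau$, is where the invariance of $r-\tau(r)$ is needed.

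\textbf{The coassociativity-type identity.} Compute
\[
(\id\otimes\Delta)\Delta-(\Delta\otimes\id)\Delta-(\tau\otimes\id)(\Delta\otimes\id)\Delta
\]
on $a$. The aim is to express it as
\[
\big(\text{operator built from } a\big)(\mathbf{Z}_r)
\]
plus terms that involve $L(\cdot)(r-\tau(r))$ inserted in one tensor slot. Concretely, the expected form is a combination such as
\[
\big(\id\otimes\id\otimes\fl_\diamond(a)-\fl_{\cdot}(a)\otimes\id\otimes\id-\id\otimes\fl_{\cdot}(a)\otimes\id\big)\mathbf{Z}_r ,
\]
up to permutations of the tensor factors, with the remaining terms collecting into expressions that vanish by Zinb-invariance.

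\textbf{Main obstacle.} I expect this last step to be the hardest. It requires bookkeeping of roughly twenty-four terms, each of the form $r_{ij}\cdot r_{kl}$ acted on by $a$. A cleaner route is to pass to the dual: by Theorem~\ref{thm:equat} it suffices that $\Delta^*$ is a Zinbiel product on $A^*$. That product is
\[
\xi\circ\eta=-\fl^*_{\cdot}\big(r^\sharp(\xi)\big)\eta+(\fl_\cdot+\fr_\cdot)^*\big(\tau(r)^\sharp(\eta)\big)\xi
\]
up to sign conventions. One then checks the Zinbiel identity using the facts that $r^\sharp$ intertwines up to $\mathbf{Z}_r$, while $r^\sharp-\tau(r)^\sharp$ is a module map by invariance.

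\textbf{Triangular case.} For the triangular statement, if $r$ is symmetric then $r-\tau(r)=0$, which is trivially Zinb-invariant, so the general case applies directly.
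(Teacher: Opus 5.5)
The paper gives no proof of this proposition; it is quoted from \cite{Wan}. Your direct-verification strategy is the natural one, and it does close. As written, however, its decisive step is only guessed, and the guess is wrong. Your claims about the compatibility conditions are correct. Set $L(a)=\id\otimes(\fl+\fr)(a)-\fl(a)\otimes\id$ and $s=r-\tau(r)$.

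\eqref{zbialg1} holds for every $r$. The cross terms cancel, and $\fl(a_1)\fl(a_2)=\fl(a_1\cdot a_2+a_2\cdot a_1)$ and $(\fl+\fr)(a_2)(\fl+\fr)(a_1)=(\fl+\fr)(a_1\cdot a_2+a_2\cdot a_1)$. For \eqref{zbialg2}, the left side minus the right side is exactly
\[
\tau L(a_1\cdot a_2)(s)-(\fl(a_1)\otimes\id)\,\tau L(a_2)(s),
\]
which vanishes by Zinb-invariance.

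The gap is the coassociativity step. This is the only place where $\mathbf{Z}_r=0$ is used, and you leave it as an expected form. The operator you propose does not work: there is no middle-slot term, and the overall sign is the opposite. Write $r=\sum_j x_j\otimes y_j$. A direct expansion, using only the Zinbiel identity and left commutativity, gives for every $r\in A\otimes A$:
\[
\begin{split}
&(\id\otimes\Delta)\Delta(a)-(\Delta\otimes\id)\Delta(a)-(\tau\otimes\id)(\Delta\otimes\id)\Delta(a)\\
&\quad=\big(\fl(a)\otimes\id\otimes\id-\id\otimes\id\otimes(\fl+\fr)(a)\big)\mathbf{Z}_r
+\sum_j\big(\tau L(a\cdot x_j)(s)-(\fl(a)\otimes\id)\,\tau L(x_j)(s)\big)\otimes y_j .
\end{split}
\]
The derivation goes in three steps.
\begin{enumerate}
\item The $(\fl+\fr)(a)$-part absorbs exactly the five terms in which $a$ acts in the third slot.
\item After subtracting the $\fl(a)$-part, the remaining terms collapse to $\sum_j\big[(\id\otimes\fl(a\cdot x_j))s-(\fl(a)\otimes\fl(x_j))s+(\fl(x_j\cdot a)\otimes\id)s\big]\otimes y_j$.
\item This equals the displayed sum, using $(\id\otimes\fl(b))s=((\fl+\fr)(b)\otimes\id)s+\tau L(b)(s)$ (which follows from $\tau s=-s$) together with $(\fl+\fr)(a\cdot b)-\fl(a)(\fl+\fr)(b)=-\fl(b\cdot a)$.
\end{enumerate}
So the same invariance combination as in \eqref{zbialg2} reappears, and the two hypotheses give the Zinbiel coalgebra axiom. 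Your triangular case is fine as written.

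Drop or repair the dual shortcut. Theorem~\ref{thm:equat} presupposes that $\Delta^*$ is already a Zinbiel product, so it reduces nothing. The coalgebra axiom is simply the transpose of the Zinbiel identity for $\Delta^*$.

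Your formula for $\Delta^*$ is also wrong, and not just up to sign conventions. It is the one attached to the Leibniz coregular representation $(-\fl^*,\fl^*+\fr^*)$, and it pairs $a$ against $r^\sharp(\xi)\cdot a$, which never occurs in $\Delta(a)$. With the paper's conventions one gets
\[
\xi\circ\eta=(\fl^*+\fr^*)(r^\sharp(\xi))\eta-\fr^*(\tau(r)^\sharp(\eta))\xi,
\]
which corresponds to the Zinbiel coregular representation $(\fl^*+\fr^*,-\fr^*)$. Finally, for non-symmetric $r$ the intertwining property you invoke would have to be stated precisely before it could be used.
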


\begin{ex}\label{ex:ZYBE}
	Let $(A, \cdot)$ be a 2-dimensional Zinbiel algebra with a basis $\{e_{1}, e_{2}\}$ whose non-zero products $\cdot$ is given by $e_{1}\cdot e_{1}=e_{2}$. 
	Clearly, $r = e_{1} \otimes e_{2} + e_{2} \otimes e_{1}$ is a symmetric solution of the $\ZYBE$ in $(A, \cdot)$.
	Therefore, $(A, \cdot, \Delta)$ is a triangular Zinbiel bialgebra, where $\Delta$ is given by Eq.~\eqref{cobou}, explicitly $\Delta$ is defined by $\Delta(e_{1})=e_{2}\otimes e_{2}$ and $\Delta(e_{2})=0$.
\end{ex}

%%%%%%%%%%%%%%%%%%%%%%%%%%%%%%%%%%%%%%%%%%%%%%%%%%%%%%%%%%%%%%%%%%%%%%%%%%%
\subsection{Lie bialgebras from Zinbiel bialgebras}\label{ssec:Li-Zin}

The pairing of a Zinbiel algebra and a $\bz$-graded Leibniz algebra gives a $\bz$-graded Lie algebra, as stated below. 
This construction is referred to as the Zinbiel algebra affinization.

\begin{pro}\label{pro:L-Z-inflie}
	Let $(A, \cdot)$ be a Zinbiel algebra and $(B, \circ)$ be a $\bz$-graded Leibniz algebra. 
	Define a binary operation on $A \otimes B$ by
	\begin{equation*}
		[a\otimes x,\; b\otimes y]= a \cdot b \otimes x \circ y - b \cdot a \otimes y \circ x, \;\; \forall a, b \in A, \; x, y \in B.
	\end{equation*}
	Then $(A\otimes B,\; [-, -])$ is a $\bz$-graded Lie algebra.
	In particular, if $(B, \circ)$ is the $\bz$-graded Leibniz algebra $(\widehat{V}_{4}, \circ)$ given in Example~\ref{ex:gr-Leib}, then $(A \otimes B,\; [-, -])$ is a $\bz$-graded Lie algebra if and only if $(A, \cdot)$ is a Zinbiel algebra.
\end{pro}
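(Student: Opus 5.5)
The plan has two parts: the general statement, which reduces to Proposition~\ref{pro:L-Z-lie}, and the converse for $(\widehat{V}_{4}, \circ)$, which comes from testing the Jacobi identity on one triple of elements.

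For the first claim I would invoke Proposition~\ref{pro:L-Z-lie}. That result never uses finite-dimensionality of $B$: the Jacobi identity for the bracket is an algebraic identity that only uses the Zinbiel identity in $A$ and the Leibniz identity in $B$. So $(A\otimes B, [-,-])$ is a Lie algebra. Skew-symmetry is immediate from the form of the bracket. For the grading I would set $(A\otimes B)_i := A\otimes B_i$. Each piece is finite-dimensional because $A$ and $B_i$ are. Since $B_i\circ B_j\subseteq B_{i+j}$ and $B_j\circ B_i\subseteq B_{i+j}$, we get $[A\otimes B_i, A\otimes B_j]\subseteq A\otimes B_{i+j}$. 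Hence $A\otimes B$ is a $\bz$-graded Lie algebra.

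For the converse with $B=\widehat{V}_{4}$, I now only assume that $(A,\cdot)$ is a vector space with a binary operation for which $A\otimes\widehat{V}_{4}$ is a Lie algebra. I would pick the degree-zero elements $x=a_1\otimes v_1$, $y=a_2\otimes v_2$, $z=a_3\otimes v_3$ and expand the Jacobi identity using the products of Example~\ref{ex:gr-Leib}. Because $v_2\diamond v_1=-v_1$, $v_3\diamond v_1=0$, $v_3\diamond v_2=0$ and $v_4$ is annihilated in every product, a direct computation should give
\begin{align*}
[[x,y],z] &= -(a_1\cdot a_2+a_2\cdot a_1)\cdot a_3\otimes v_4,\\
[[y,z],x] &= -[x,a_2\cdot a_3\otimes v_3] = a_1\cdot(a_2\cdot a_3)\otimes v_4,\\
[[z,x],y] &= [a_1\cdot a_3\otimes v_4,\; a_2\otimes v_2]=0.
\end{align*}
Summing, the Jacobi identity becomes $\big(a_1\cdot(a_2\cdot a_3)-(a_1\cdot a_2)\cdot a_3-(a_2\cdot a_1)\cdot a_3\big)\otimes v_4=0$. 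Since $v_4\neq 0$, this is exactly the Zinbiel identity, so $(A,\cdot)$ is Zinbiel. The other direction is the first part.

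The main obstacle is choosing a test triple whose Jacobi expansion isolates precisely the three terms of the Zinbiel identity, with no extra terms from the other two cyclic brackets. The triple $(v_1,v_2,v_3)$ is what I would try first, because $v_4$ occurs only as $v_1\diamond v_3$ and $v_3$ only as $v_2\diamond v_3$. The remaining work is routine bookkeeping of the signs in the bracket, using that the $\kt$-powers simply add.
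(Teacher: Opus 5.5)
Your proposal is correct and follows the same route as the paper: you invoke Proposition~\ref{pro:L-Z-lie} plus the grading $A\otimes B_i$ for the first claim, and for the converse you test the Jacobi identity on the triple built from $v_1, v_2, v_3$ and read off the $v_4$-component. The paper uses $v_1\kt^{i}, v_2\kt^{i}, v_3\kt^{i}$ and compares coefficients of $v_4\kt^{3i}$, and your degree-zero choice is the case $i=0$. Your explicit bracket computations, which the paper omits, check out.
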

\begin{proof}
	By Proposition~\ref{pro:L-Z-lie}, $(A \otimes B,\; [-, -])$ is a Lie algebra.
	Clearly, $(A\otimes B,\; [-, -])$ is a $\bz$-graded Lie algebra for $(B, \circ)$ is $\bz$-graded. 
	Suppose $(B, \circ)$ is the $\bz$-graded Leibniz algebra $(\widehat{V}_{4}, \circ)$ in Example~\ref{ex:gr-Leib} and $(A\otimes B,\; [-, -])$ is a $\bz$-graded Lie algebra.
	Set $b_{1}=v_{1}\kt^{i}$, $b_{2}=v_{2}\kt^{i}$ and $b_{3}=v_{3}\kt^{i}$.
	By comparing the coefficients of $v_{4}\kt^{3i}$ in the equation
	\begin{equation*}
		[[a_{1}\otimes b_{1},\; a_{2}\otimes b_{2}],\; a_{3}\otimes b_{3}]
		+[[a_{2}\otimes b_{2},\; a_{3}\otimes b_{3}],\; a_{1}\otimes b_{1}]
		+[[a_{3}\otimes b_{3},\; a_{1}\otimes b_{1}],\; a_{2}\otimes b_{2}]=0,
	\end{equation*}
	we get 
	\begin{equation*}
		a_{1} \cdot(a_{2} \cdot a_{3}) - (a_{1} \cdot a_{2}) \cdot a_{3} - (a_{2} \cdot a_{1}) \cdot a_{3}=0.
	\end{equation*}
	That is, $(A, \cdot)$ is a Zinbiel algebra.
\end{proof}

Now we give the dual version of Proposition~\ref{pro:L-Z-inflie}.

\begin{ex}\label{ex:zin-coalg}
	Let $V$ be a 4-dimensional vector space with a basis $\{v_{1}, v_{2}, v_{3}, v_{4}\}$.
	Then $(V[t, t^{-1}]  = \oplus_{i\in\bz}V_{i}, \vartheta)$ is a completed Leibniz coalgebra, where $V_{i}=\Bbbk\{v_{1}\kt^{i}, v_{2}\kt^{i}, v_{3}\kt^{i}, v_{4}\kt^{i}\}$ and $\vartheta: V[\kt, \kt^{-1}] \rightarrow V[\kt, \kt^{-1}] \,\hat{\otimes}\, V[\kt, \kt^{-1}]$ is defined by 
	\begin{align*}
		&\vartheta(v_{1}\kt^{i})=\sum_{j}v_{4}\kt^{j}\otimes v_{1}\kt^{i-j}, \;
		&&\vartheta(v_{2}\kt^{i})=-\sum_{j}v_{3}\kt^{j}\otimes v_{1}\kt^{i-j}, \\
		&\vartheta(v_{3}\kt^{i})=\sum_{j}\big(v_{3}\kt^{j}\otimes v_{4}\kt^{i-j}
		-v_{4}\kt^{j}\otimes v_{3}\kt^{i-j}\big), \; 
		&&\vartheta(v_{4}\kt^{i})=0.
	\end{align*}
\end{ex}

\begin{pro}\label{pro:lie-co}
	Let $(A, \Delta)$ be a Zinbiel coalgebra and $(B, \vartheta)$ be a completed Leibniz coalgebra. 
	Define a linear map $\delta: A\otimes B\rightarrow(A\otimes B) \,\hat{\otimes}\,(A\otimes B)$ by Eq.~\eqref{colie}.
	Then $(A\otimes B, \delta)$ is a completed Lie coalgebra.
	In particular, if $(B, \vartheta)$ is the completed Leibniz coalgebra given in Example \ref{ex:zin-coalg}, then
	$(A \otimes B,\; \delta)$ is a completed Lie algebra if and only if $(A, \Delta)$ is a Zinbiel coalgebra.
\end{pro}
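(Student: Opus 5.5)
The first assertion is exactly Proposition~\ref{pro:L-Z-colie}, so only the converse for the particular completed Leibniz coalgebra $(B,\vartheta)$ of Example~\ref{ex:zin-coalg} needs work. The plan is to dualize the argument of Proposition~\ref{pro:L-Z-inflie}. We assume $(A\otimes B,\delta)$ is a completed Lie coalgebra, with $\delta$ defined by Eq.~\eqref{colie}. Skew-symmetry $\delta=-\hat\tau\delta$ holds automatically. We then apply the co-Jacobi identity \eqref{lia1} to one well-chosen element $a\otimes v_{3}\kt^{i}$ and read off a single graded component.

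Concretely, the first step is to record $(\id\hat\otimes\vartheta)\vartheta$, $(\vartheta\hat\otimes\id)\vartheta$ and their flips on $v_3\kt^i$ from the formulas in Example~\ref{ex:zin-coalg}. We have $\vartheta(v_3\kt^i)=\sum_j(v_3\kt^j\otimes v_4\kt^{i-j}-v_4\kt^j\otimes v_3\kt^{i-j})$, together with $\vartheta(v_4)=0$ and $\vartheta(v_3\kt^j)$ as above. It follows that every iterated coproduct of $v_3\kt^i$ is a combination of the tensors
\begin{equation*}
v_4\otimes v_3\otimes v_4,\qquad v_3\otimes v_4\otimes v_4,\qquad v_4\otimes v_4\otimes v_3,
\end{equation*}
with Laurent exponents summing to $i$. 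This mirrors the choice of $v_4\kt^{3i}$ in Proposition~\ref{pro:L-Z-inflie}, where $v_1,v_2,v_3$ generated $v_4$.

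The second step is to expand the twelve terms in the proof of Proposition~\ref{pro:L-Z-colie} for $a\otimes v_3\kt^{i}$. We fix exponents $(j,k,l)$ with $j+k+l=i$ and extract the coefficient of a single basis pattern, say $(\cdot\otimes v_3\kt^{j})\otimes(\cdot\otimes v_4\kt^{k})\otimes(\cdot\otimes v_4\kt^{l})$ in $(A\otimes B)^{\hat\otimes 3}$. Because the $B$-parts form linearly independent basis tensors, this coefficient is an element of $A^{\otimes 3}$ built from $(\id\otimes\Delta)\Delta(a)$, $(\Delta\otimes\id)\Delta(a)$ and their permutations by $\tau\otimes\id$ and $\id\otimes\tau$.

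The main obstacle is the sign and permutation bookkeeping. The target is to show that, after collecting terms, this extracted coefficient equals (up to a permutation of tensor factors and a nonzero scalar)
\begin{equation*}
(\id\otimes\Delta)\Delta(a)-(\Delta\otimes\id)\Delta(a)-(\tau\otimes\id)(\Delta\otimes\id)\Delta(a).
\end{equation*}
If the chosen pattern instead mixes in other terms, we will try another of the three patterns, or take a linear combination of two patterns, chosen by symmetry. Once this is arranged, the vanishing of the coalgebra identity gives exactly the Zinbiel coassociativity, so $(A,\Delta)$ is a Zinbiel coalgebra. As a consistency check, this can be compared by duality with the computation in Proposition~\ref{pro:L-Z-inflie}: the coproduct of Example~\ref{ex:zin-coalg} should be (up to reindexing) dual to the product of Example~\ref{ex:gr-Leib}.
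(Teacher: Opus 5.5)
\paragraph{Gap in the converse.} Your reduction of the first assertion to Proposition~\ref{pro:L-Z-colie} is fine. The converse, however, cannot be extracted from the test element $a\otimes v_3\kt^{i}$, whichever pattern or combination of patterns you use.

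Put $Z=(\id\otimes\Delta)\Delta(a)-(\Delta\otimes\id)\Delta(a)-(\tau\otimes\id)(\Delta\otimes\id)\Delta(a)$. For $\pi\in S_3$ in one-line notation, let $Z_{\pi}$ denote $Z$ with its factors rearranged as $z_{\pi(1)}\otimes z_{\pi(2)}\otimes z_{\pi(3)}$. In every pattern available for $v_3\kt^{i}$ the vector $v_4$ occurs twice, so the twelve terms collapse onto only three $B$-patterns. Doing your bookkeeping, the $A^{\otimes 3}$-coefficients of $v_3\kt^{j}\otimes v_4\kt^{k}\otimes v_4\kt^{l}$, $v_4\kt^{j}\otimes v_3\kt^{k}\otimes v_4\kt^{l}$ and $v_4\kt^{j}\otimes v_4\kt^{k}\otimes v_3\kt^{l}$ come out as $Z_{213}+Z_{312}-Z_{231}-Z_{321}$, $-Z_{123}-Z_{132}+Z_{231}+Z_{321}$ and $Z_{123}+Z_{132}-Z_{213}-Z_{312}$. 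Each is a signed sum of rearrangements whose coefficients sum to zero. Hence all three, and every linear combination of them, vanish whenever $Z$ is totally symmetric.

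A concrete counterexample: take $A=\Bbbk e$ with $\Delta(e)=e\otimes e$. This is not a Zinbiel coalgebra, since $Z=-e\otimes e\otimes e$. Under $e\otimes x\mapsto x$ one has $\delta(e\otimes v_3\kt^{i})=2\vartheta(v_3\kt^{i})$ and $\delta(e\otimes v_4\kt^{i})=0$. A direct check shows the co-Jacobi identity \eqref{lia1} for $\delta$ holds at every $e\otimes v_3\kt^{i}$, and also at $e\otimes v_1\kt^{i}$ and $e\otimes v_4\kt^{i}$; it fails only at $e\otimes v_2\kt^{i}$. So your target step (``the extracted coefficient equals $Z$ up to permutation and a nonzero scalar'') is false, and your fallback cannot repair it.

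\paragraph{The missing idea.} Choose $b$ whose iterated coproduct involves three \emph{distinct} basis vectors. Then the six rearrangements of positions land in six different $B$-patterns and cannot cancel one another.

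Your duality heuristic points the right way but was applied to the wrong vector. Under $\omega$, the output $v_4$ of Proposition~\ref{pro:L-Z-inflie} pairs with $v_2$, and $v_1,v_2,v_3$ pair with $v_3,v_4,v_1$. Accordingly the paper tests on $a\otimes v_2\kt^{i}$. Summing over $j+k+l=i$, one has $(\id\hat\otimes\vartheta)\vartheta(v_2\kt^{i})=-\sum v_3\kt^{j}\otimes v_4\kt^{k}\otimes v_1\kt^{l}$ and $(\vartheta\hat\otimes\id)\vartheta(v_2\kt^{i})=-\sum(v_3\kt^{j}\otimes v_4\kt^{k}-v_4\kt^{j}\otimes v_3\kt^{k})\otimes v_1\kt^{l}$. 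Only three of the twelve terms contribute to the pattern $v_3\kt^{j}\otimes v_4\kt^{k}\otimes v_1\kt^{l}$, and their total coefficient is exactly $-Z$, so its vanishing gives Zinbiel coassociativity directly. (The paper's proof names the pattern $v_3\otimes v_3\otimes v_3$, which does not occur; the pattern that works is $v_3\otimes v_4\otimes v_1$.)
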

\begin{proof}
	By Proposition~\ref{pro:L-Z-colie}, $(A\otimes B, \delta)$ is a completed Lie coalgebra. 
	Conversely, suppose that $(B,\; \vartheta)$ is the completed Leibniz coalgebra $(V[\kt, \kt^{-1}],\; \vartheta)$ and $(A \otimes B,\; \delta)$ is a completed Lie algebra.
	Setting $b=v_{2}\kt^{i}$ and comparing the coefficients of $\sum_{j,k}v_{3}\kt^{j}\otimes v_{3}\kt^{k}\otimes v_{3}\kt^{i-j-k}$ in the equation
	\begin{equation*}
		\big((\id\,\hat{\otimes}\,\delta)\delta-(\hat{\tau}\,\hat{\otimes}\,\id) (\id\,\hat{\otimes}\,\delta)\delta-(\delta\,\hat{\otimes}\,\id)\delta\big)(a\otimes b)=0,
	\end{equation*}
	we obtain
	\begin{equation*}
		(\id\otimes\Delta)(\Delta(a))=(\Delta\otimes\id)(\Delta(a))
		+(\tau\otimes\id)((\Delta\otimes\id)(\Delta(a))), \;\; \forall a \in A.
	\end{equation*}
	Therefore, $(A, \Delta)$ is a Zinbiel coalgebra.
\end{proof}

Now we could extend the Zinbiel algebra affinization and the Zinbiel coalgebra
affinization to Zinbiel bialgebras.

\begin{defi}\label{def:quad}
	A bilinear form $\omega$ on a $\bz$-graded Leibniz algebra $(B=\oplus_{i\in\bz}B_{i},\; \circ)$ is called {\bf graded}, if there exists some $m\in\bz$ such that $\omega(B_{i}, B_{j})=0$ when $i+j+m \neq 0$.
	A {\bf quadratic $\bz$-graded Leibniz algebra}, denoted by $(B=\oplus_{i\in\bz}B_{i}, \circ, \omega)$, is a $\bz$-graded Leibniz algebra together with a skew-symmetric invariant nondegenerate graded bilinear form.
	In particular, if $B=B_{0}$, it is simply called a {\bf quadratic Leibniz algebra}.
\end{defi}

\begin{rmk}
	Let $(B=\oplus_{i\in\bz}B_{i}, \circ, \omega)$ be a quadratic $\bz$-graded Leibniz algebra
	and $\{e_{\lambda}\}_{\lambda\in\Lambda}$ be a basis of $B$ consisting of homogeneous
	elements. 
	Then we can always find its graded dual basis $\{f_\lambda\}$ with respect to $\omega$ consisting of homogeneous elements, that is, $\omega(f_{\lambda}, e_{\eta})
	=\delta_{\lambda, \eta}$ for all $\lambda, \eta \in \Lambda$.
\end{rmk}

Let $\omega$ be a nondegenerate graded bilinear form on a $\mathbb{Z}$-graded vector space $V = \oplus_{i \in \mathbb{Z}} V_i$. 
For all $l \geq 2$, the bilinear form $\widetilde{\omega}_l: \overbrace{(V \hat{\otimes} \cdots \hat{\otimes} V)}^{l-\text{fold}} \otimes \overbrace{(V \otimes \cdots \otimes V)}^{l-\text{fold}} \to \Bbbk$ defined by
\begin{equation*}
	\widetilde{\omega}_l(\sum_{i_1, \cdots, i_l, \alpha} v_{1i_1\alpha} \otimes \cdots \otimes v_{li_l\alpha}, \sum_{j_1, \cdots, j_l}u_{j_1} \otimes \cdots \otimes u_{j_l}) := \sum_{i_1,\cdots, i_k, \alpha} \sum_{j_1, \cdots, j_l} \prod_{m=1}^l \omega(v_{mi_m \alpha}, u_{j_m}),
\end{equation*}
is \textbf{left nondegenerate} in the sense that
$\widetilde{\omega}_l(\sum_{i_1, \cdots, i_l, \alpha}
v_{1i_1\alpha} \otimes \cdots \otimes v_{li_l\alpha}, u_1 \otimes
\cdots \otimes u_l) = 0$ for all homogeneous elements $u_1, \cdots, u_l \in V$ implies $\sum_{i_1, \cdots, i_l, \alpha} v_{1i_1\alpha} \otimes \cdots \otimes v_{li_l\alpha} = 0$. 
For brevity, we will suppress the index $l$ without ambiguity.

\begin{lem}\label{lem:dual}
	Let $(B=\oplus_{i\in\bz}B_{i},\; \circ,\; \omega)$ be a quadratic $\bz$-graded Leibniz algebra and $\vartheta: B\rightarrow B\,\hat{\otimes}\,B$ be the linear map defined by
	\begin{equation}
		\widetilde{\omega}(\vartheta(b_{1}),\; b_{2}\otimes b_{3}) = -\omega(b_{1},\; b_{2}\ast b_{3}), \;\; \forall b_{1}, b_{2}, b_{3}\in B. \label{quad-dual}
	\end{equation}
	Then $(B, \vartheta)$ is a completed Leibniz coalgebra.
\end{lem}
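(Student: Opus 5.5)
The plan is to mirror the proof of Lemma~\ref{lem:qZ-dual}, replacing the finite-dimensional nondegeneracy by the left nondegeneracy of $\widetilde{\omega}$ on completed tensor products. (Here $\ast$ in Eq.~\eqref{quad-dual} is the Leibniz product $\circ$.)

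The first step is to check that $\vartheta$ is well defined as a map into $B\,\hat{\otimes}\,B$. Fix a homogeneous $b_1\in B_k$, and let $m$ be the integer from Definition~\ref{def:quad}. For homogeneous $b_2\in B_i$ and $b_3\in B_j$ we have $b_2\circ b_3\in B_{i+j}$, so the right-hand side of Eq.~\eqref{quad-dual} vanishes unless $k+i+j+m=0$. For each pair $(i,j)$ we use the homogeneous graded dual basis $\{f_\lambda\}$ of the Remark, and we define the $(i',j')$-component of $\vartheta(b_1)$ to be the finite sum
\[
-\sum\omega(b_1,\, e_\lambda\circ e_\mu)\, f_\lambda\otimes f_\mu ,
\]
where the sum runs over $e_\lambda, e_\mu$ of the appropriate degrees, up to the order and sign conventions of $\omega$. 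Each component lies in a finite-dimensional space $B_{i'}\otimes B_{j'}$, so the product over all $(i',j')$ is a genuine element of $B\,\hat{\otimes}\,B$. It satisfies Eq.~\eqref{quad-dual} on homogeneous elements, and hence everywhere by linearity. Uniqueness follows from left nondegeneracy.

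The second step is to compute the three terms of Eq.~\eqref{lia1} against $b_2\otimes b_3\otimes b_4$, with all $b_i$ homogeneous. The key identities are
\begin{align*}
\widetilde{\omega}\big((\vartheta\,\hat{\otimes}\,\id)\vartheta(b_1),\, b_2\otimes b_3\otimes b_4\big) &= \omega\big(b_1,\,(b_2\circ b_3)\circ b_4\big),\\
\widetilde{\omega}\big((\id\,\hat{\otimes}\,\vartheta)\vartheta(b_1),\, b_2\otimes b_3\otimes b_4\big) &= \omega\big(b_1,\, b_2\circ(b_3\circ b_4)\big),\\
\widetilde{\omega}\big((\hat{\tau}\,\hat{\otimes}\,\id)(\id\,\hat{\otimes}\,\vartheta)\vartheta(b_1),\, b_2\otimes b_3\otimes b_4\big) &= \omega\big(b_1,\, b_3\circ(b_2\circ b_4)\big).
\end{align*}
To obtain them, apply Eq.~\eqref{quad-dual} twice. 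For instance, write $\vartheta(b_1)=\sum u\otimes v$; then
\[
\widetilde{\omega}\big(\sum u\otimes\vartheta(v),\, b_2\otimes b_3\otimes b_4\big)=-\sum\omega(u,b_2)\,\omega(v,\, b_3\circ b_4)=\omega\big(b_1,\, b_2\circ(b_3\circ b_4)\big).
\]
The two minus signs cancel. Only finitely many graded components contribute to each pairing, so these interchanges of sums are legitimate.

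The third step assembles the result. The combination
\[
(\id\,\hat{\otimes}\,\vartheta)\vartheta-(\vartheta\,\hat{\otimes}\,\id)\vartheta-(\hat{\tau}\,\hat{\otimes}\,\id)(\id\,\hat{\otimes}\,\vartheta)\vartheta
\]
pairs to
\[
\omega\big(b_1,\; b_2\circ(b_3\circ b_4)-(b_2\circ b_3)\circ b_4-b_3\circ(b_2\circ b_4)\big),
\]
which vanishes by the Leibniz identity. Left nondegeneracy of $\widetilde{\omega}$ then gives Eq.~\eqref{lia1}, so $(B,\vartheta)$ is a completed Leibniz coalgebra.

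The main obstacle is the bookkeeping in the second step. We must verify the repeated-pairing identities in the completed setting, where $\vartheta(v)$ is itself an infinite sum. We also have to confirm that the skew-symmetry convention of $\omega$ produces exactly the stated signs, especially for the $\hat{\tau}$-term. I would do this by expanding everything in the homogeneous dual bases and checking degree by degree.
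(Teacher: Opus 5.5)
Your proposal is correct and follows the paper's route. You pair $(\id\,\hat{\otimes}\,\vartheta)\vartheta-(\vartheta\,\hat{\otimes}\,\id)\vartheta-(\hat{\tau}\,\hat{\otimes}\,\id)(\id\,\hat{\otimes}\,\vartheta)\vartheta$ against homogeneous triples, reduce it via Eq.~\eqref{quad-dual} to $\omega$ applied to the Leibniz identity, and conclude by left nondegeneracy of $\widetilde{\omega}$. Your well-definedness check and your term-by-term derivation of the three pairings fill in details the paper compresses into one line; the formula $\vartheta(b_1)=-\sum\omega(b_1,e_\lambda\circ e_\mu)\,f_\lambda\otimes f_\mu$ in fact holds exactly as written, with no sign adjustment, since $\omega(f_\lambda,e_\eta)=\delta_{\lambda,\eta}$.
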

\begin{proof}
	For all $b\in B$ and $b_1 \otimes b_2 \otimes b_3\in B_{i}\otimes B_{j}\otimes B_{k}$,
	we have
	\begin{align*}
		&\;\widetilde{\omega}((\id\hat{\otimes}\vartheta)\vartheta(b) - (\vartheta\hat{\otimes}\id)\vartheta(b) -(\hat{\tau}\hat{\otimes}\,\id) (\id\hat{\otimes}\vartheta)\vartheta(b),\ \
		b_1 \otimes b_2 \otimes b_3)\\
		=&\; \omega(b, b_1 \circ (b_2 \circ b_3) - (b_1 \circ b_2) \circ b_3 - b_2 \circ (b_1 \circ b_3) )\\
		=&\; 0.
	\end{align*}
	Then the nondegeneracy of $\widetilde{\omega}$ shows $(\id\hat{\otimes}\vartheta)\vartheta(b) = (\vartheta\hat{\otimes}\id)\vartheta(b) +  (\hat{\tau}\hat{\otimes}\,\id) (\id\hat{\otimes}\vartheta)\vartheta(b)$.
	Thus, $(B, \vartheta)$ is a completed Leibniz coalgebra.
\end{proof}

\begin{ex}\label{ex:Leib-quad}
	Consider the affine Leibniz algebra $(\widehat{V}_{4}, \circ)$ in Example~\ref{ex:gr-Leib}. 
	There is a bilinear form $\omega$ on $(\widehat{V}_{4}, \circ)$ given by
	\begin{equation*}
		\omega(x\kt^{i}, y\kt^{j})
		=\delta_{i+j, 0}\kappa(x, y), \;\; \forall x, y\in B, i, j\in\bz.
	\end{equation*}
	where $\kappa$ is given by $\kappa(x, y)=1=-\kappa(y, x)$ if $x=v_{1}$, $y=v_{3}$, or $x=v_{2}$, $y=v_{4}$ and all remaining basis pairings zero. 
	It is easy to see that $(\widehat{V}_{4}, \circ, \omega)$ is a quadratic $\bz$-graded Leibniz algebra. 
	Then the completed Leibniz coalgebra $(\widehat{V}_{4}, \vartheta)$ obtained by Lemma~\ref{lem:dual} coincides with the completed Leibniz coalgebra given in Example~\ref{ex:zin-coalg}.
\end{ex}

We now give the notion and results on completed Lie bialgebras.

\begin{thm}\label{thm:bialg}
	Let $(A, \cdot, \Delta)$ be a Zinbiel bialgebra, 
	$(B=\oplus_{i\in\bz}B_{i}, \circ, \omega)$ be a quadratic $\bz$-graded Leibniz algebra and $(\g:=A\otimes B,\; [-,-])$ be the induced Lie algebra from $(A, \cdot)$ and $(B, \circ)$ in Proposition~\ref{pro:L-Z-inflie}.
	Define linear maps $\vartheta: B\rightarrow B\,\hat{\otimes}\,B$ by Eq.~\eqref{quad-dual} and $\delta: \g\rightarrow\g\,\hat{\otimes}\,\g$ by
	Eq.~\eqref{colie}.
	Then $(\g, [-, -], \delta)$ is a completed Lie bialgebra.
	In particular, if $(B=\oplus_{i\in\bz}B_{i}, \circ, \omega)$ is the quadratic $\bz$-graded Leibniz algebra $(\widehat{V}_{4}, \circ, \omega)$ given in Example~\ref{ex:Leib-quad}, then $(\g, [-, -], \delta)$ is a completed Lie bialgebra if and only if $(A, \cdot, \Delta)$ is a Zinbiel bialgebra.
\end{thm}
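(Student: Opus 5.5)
By Proposition~\ref{pro:L-Z-inflie}, $(\g,[-,-])$ is a $\bz$-graded Lie algebra. By Lemma~\ref{lem:dual} together with Proposition~\ref{pro:lie-co}, $(\g,\delta)$ is a completed Lie coalgebra. So for the first assertion it remains only to verify the compatibility condition Eq.~\eqref{comlie}. I would mimic the proof of Theorem~\ref{thm:liebia-LZ} with the roles of the two factors interchanged. There the coproduct on the Leibniz side was genuine (a Leibniz bialgebra) and the Zinbiel coproduct came from the invariant form $\kappa$. Here the Zinbiel side carries the bialgebra identities \eqref{zbialg1}--\eqref{zbialg2}, and $\vartheta$ is the dual of $\circ$ via $\omega$.

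The first step is to expand both sides of Eq.~\eqref{comlie} for $g_1=a\otimes b$ and $g_2=c\otimes d$, using Sweedler notation for $\Delta$ and for $\vartheta$ (infinite but graded sums). The left side is $(\id-\hat\tau)\big(\Delta(a\cdot c)\bullet\vartheta(b\circ d)-\Delta(c\cdot a)\bullet\vartheta(d\circ b)\big)$. The right side consists of eight kinds of terms in which $\cdot$ acts on one leg of $\Delta(a)$ or $\Delta(c)$ and $\circ$ acts on the corresponding leg of $\vartheta(b)$ or $\vartheta(d)$.

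The second step is to rewrite every expression in which $\circ$ is applied to a leg of $\vartheta$ in terms of $\vartheta(b\circ d)$, $\vartheta(d\circ b)$, their flips, and a small residual family. This is done by pairing with $b_2\otimes b_3$ via the left nondegenerate form $\widetilde\omega$ and using invariance and skew-symmetry of $\omega$, exactly as the $\kappa$-identities were derived in Theorem~\ref{thm:liebia-LZ}. For example:
\begin{align*}
\sum_{(d)} d_{(1)}\otimes b\circ d_{(2)} &= \vartheta(b\circ d)-\hat\tau\vartheta(d\circ b)\ \text{(up to sign conventions)},\\
\sum_{(d)} b\circ d_{(1)}\otimes d_{(2)} &= \vartheta(b\circ d)+\vartheta(d\circ b),
\end{align*}
and similar identities hold for $d_{(1)}\otimes d_{(2)}\circ b$ and for the analogous expressions in $b$. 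Since $\omega$ is graded, each pairing involves only finitely many nonzero terms, so these identities are legitimate in $B\hat\otimes B$. After substitution, both sides are sums of terms of the form $X(a,c)\bullet Y(b,d)$. Grouping by the $B$-factor $\vartheta(b\circ d)$, $\hat\tau\vartheta(b\circ d)$, $\vartheta(d\circ b)$, and so on, the $A$-coefficients reduce to the combinations appearing in \eqref{zbialg1} and \eqref{zbialg2}, together with their flips and the $a\leftrightarrow c$ swaps. These vanish by the Zinbiel bialgebra axioms.

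The main obstacle is this bookkeeping. Some residual terms on the $B$-side, such as $b_{(1)}\circ d\otimes b_{(2)}$, have no clean expression through $\vartheta(b\circ d)$, just as $c_{(1)}\cdot a\otimes c_{(2)}$ survived in Theorem~\ref{thm:liebia-LZ}. I would first try to show that their $A$-coefficients combine to zero by \eqref{zbialg2} alone, checking this at the level of $\widetilde\omega$-pairings against homogeneous elements.

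For the converse with $B=\widehat V_4$: if $(\g,[-,-],\delta)$ is a completed Lie bialgebra, then Proposition~\ref{pro:L-Z-inflie} gives that $(A,\cdot)$ is Zinbiel. By Example~\ref{ex:Leib-quad}, $\vartheta$ coincides with the coalgebra of Example~\ref{ex:zin-coalg}, so Proposition~\ref{pro:lie-co} gives that $(A,\Delta)$ is a Zinbiel coalgebra. It remains to extract \eqref{zbialg1} and \eqref{zbialg2} from Eq.~\eqref{comlie}. I would choose specific homogeneous basis elements, for instance $b=v_2\kt^i$ and $d=v_3\kt^k$, or $b=v_1\kt^i$ and $d=v_2\kt^k$. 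Computing $\vartheta$ on them from Example~\ref{ex:zin-coalg} and comparing coefficients of carefully chosen basis tensors, such as $\sum_j v_3\kt^j\otimes v_1\kt^{i+k-j}$ or $v_4\kt^j\otimes v_3\kt^{\cdot}$, should isolate each of \eqref{zbialg1} and \eqref{zbialg2} separately. The fiddly part is finding a pair $(b,d)$ and a coefficient for which exactly the terms of one identity survive.
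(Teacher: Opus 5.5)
Your architecture matches the paper's. The coalgebra part comes from Lemma~\ref{lem:dual} and Proposition~\ref{pro:lie-co}. Compatibility is obtained by reducing the $B$-side through $\widetilde\omega$-pairings and then invoking \eqref{zbialg1}--\eqref{zbialg2}. The converse uses exactly the pairs $(v_1\kt^i,v_2\kt^k)$ and $(v_2\kt^i,v_3\kt^k)$. But the middle step, which is the whole content of the proof, is mis-specified and would not close as you describe it.

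\textbf{The displayed identities are wrong.} You transplanted them from the Zinbiel side of Theorem~\ref{thm:liebia-LZ}, and both are false for $\vartheta$ dual to a left Leibniz product, under any choice of signs. For instance, with $(b,d)=(v_1\kt^i,v_2\kt^k)$ the left sides are $0$ and $\sum_j v_4\kt^j\otimes v_1\kt^{i+k-j}$, which your right sides do not reproduce. Put $X:=\sum_{(b)}b_{(1)}\otimes b_{(2)}\circ d$ and $Y:=\sum_{(d)}d_{(1)}\otimes d_{(2)}\circ b$. Pair with $u\otimes v$ and use the Leibniz identity, $(p\circ q+q\circ p)\circ r=0$, invariance, and $\omega(p\circ q,r)=-\omega(q,p\circ r)$. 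This gives
\begin{align*}
&\sum_{(b)}b_{(1)}\circ d\otimes b_{(2)}=0=\sum_{(d)}d_{(1)}\circ b\otimes d_{(2)},\qquad \sum_{(d)}b\circ d_{(1)}\otimes d_{(2)}=X,\qquad \sum_{(b)}d\circ b_{(1)}\otimes b_{(2)}=Y,\\
&\sum_{(d)}d_{(1)}\otimes b\circ d_{(2)}=\vartheta(b\circ d)-X,\qquad \sum_{(b)}b_{(1)}\otimes d\circ b_{(2)}=\vartheta(d\circ b)-Y.
\end{align*}
So the residual you singled out, $\sum b_{(1)}\circ d\otimes b_{(2)}$, vanishes. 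The family that actually survives is $X,Y$.

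\textbf{The missing idea is a linear relation among the $B$-side terms:}
\[
\vartheta(d\circ b)+\hat\tau\vartheta(b\circ d)=(\id-\hat\tau)(Y-X).
\]
Because everything sits inside $(\id-\hat\tau)$, ``group by $B$-factor and show each $A$-coefficient vanishes'' is not the right test. The paper writes the right side of \eqref{comlie} as $\delta([g_1,g_2])+\Phi$ and substitutes this relation for $\vartheta(d\circ b)$ in $\Phi$. Only then are the coefficients of $\vartheta(b\circ d)$, $X$, $Y$ exactly combinations killed by the axioms. The coefficients of $X$ and $Y$ require both \eqref{zbialg1} and \eqref{zbialg2}, with $a$ and $c$ interchanged, not \eqref{zbialg2} alone.

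The same $\Phi$ makes the converse clean:
\begin{itemize}
\item For $(b,d)=(v_1\kt^i,v_2\kt^k)$, one has $Y=0$ and $\vartheta(b\circ d)=X=\sum_j v_4\kt^j\otimes v_1\kt^{i+k-j}$. The coefficient of this tensor gives \eqref{zbialg1}.
\item For $(v_2\kt^i,v_3\kt^k)$, one has $Y=0$ and $d\circ b=0$. The coefficient of $\sum_j v_3\kt^j\otimes v_4\kt^{i+k-j}$ gives \eqref{zbialg2}.
\end{itemize}
A tensor like $v_3\otimes v_1$ never occurs for these pairs, so comparing its coefficient yields nothing.
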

\begin{proof}
By Lemma~\ref{lem:dual}, $(B, \vartheta)$ is a completed Leibniz coalgebra. 
Then by Proposition~\ref{pro:lie-co}, $(\g, \delta)$ is a completed Lie coalgebra.
Let $a \otimes x, b \otimes y \in A \otimes B$ with $x, y \in B$ homogeneous elements.
Suppose that
\begin{equation*}
	\vartheta(x) = \sum_{i,j,\alpha}x_{1i\alpha} \otimes x_{2j\alpha}, \;\; \vartheta(y) = \sum_{i,j,\alpha}y_{1i\alpha} \otimes y_{2j\alpha}.
\end{equation*}
For all homogeneous elements $u, v \in B$, we have
\begin{align*}
&\;\widetilde{\omega}(\sum_{i,j,\alpha}y_{1i\alpha}\otimes x \circ y_{2j\alpha}-\vartheta(x \circ y)+\sum_{i,j,\alpha}x_{1i\alpha}\otimes x_{2j\alpha} \circ y,\ \ u\otimes v)\\[-2mm]
=&\;\omega(y,\ \ u\circ(x\circ v)-x\circ(u\circ v)-(u\circ x)\circ v) = 0.
\end{align*}
Since $\widetilde{\omega}$ is left nondegenerate, we obtain
\begin{equation*}
	\sum_{i,j,\alpha}y_{1i\alpha}\otimes x \circ y_{2j\alpha}
	=\vartheta (x \circ y)
	-\sum_{i,j,\alpha}x_{1i\alpha}\otimes x_{2j\alpha} \circ y.
\end{equation*}
Similarly, we have
\begin{align*}
	&\sum_{i,j,\alpha} y_{1i\alpha}\circ x \otimes y_{2j\alpha}=0=\sum_{i,j,\alpha}
	 x_{1i\alpha}\circ y \otimes x_{2j\alpha}, \;\;
	\sum_{i,j,\alpha} x\circ y_{1i\alpha}
	\otimes y_{2j\alpha}=\sum_{i,j,\alpha} x_{1i\alpha}\otimes x_{2j\alpha}\circ y , \\
	&\sum_{i,j,\alpha} y\circ x_{1i\alpha} \otimes x_{2j\alpha}=\sum_{i,j,\alpha}
	y_{1i\alpha}\otimes y_{2j\alpha}\circ x , \;\;
	\sum_{i,j,\alpha}x_{1i\alpha}\otimes
	 y\circ x_{2j\alpha} =\vartheta(y\circ x)-\sum_{i,j,\alpha}y_{1i\alpha}
	\otimes y_{2j\alpha}\circ x .
\end{align*}
Thus,
\begin{align*}
	&\; (\ad(a\otimes x)\,\hat{\otimes}\,\id
	+\id\,\hat{\otimes}\,\ad(a\otimes x))\delta(b\otimes y)
	-(\ad(b\otimes y)\,\hat{\otimes}\,\id
	+\id\,\hat{\otimes}\,\ad(b\otimes y))\delta(a\otimes x)\\
	=&\;(\id-\hat{\tau})\bigg(\Big(\sum_{(b)}b_{(1)}\otimes a\cdot b_{(2)}\Big)
	\bullet\vartheta(x\circ y)-\Big(\sum_{(a)}a_{(1)}\otimes b\cdot a_{(2)} \Big)
	\bullet\vartheta(y\circ x)\\[-2mm]
	&\quad+\Big(\sum_{(b)}\big(a\cdot b_{(1)}\otimes b_{(2)}-b_{(1)}\otimes a\cdot b_{(2)} \big)
	+\sum_{(a)}a_{(1)}\otimes a_{(2)}\cdot b \Big)\bullet\Big(\sum_{i,j,\alpha}x_{1i\alpha}
	\otimes x_{2j\alpha}\circ y \Big) \\[-2mm]
	&\quad-\Big(\sum_{(b)}b_{(1)}\otimes b_{(2)}\cdot a +\sum_{(a)}\big( b\cdot a_{(1)} \otimes
	a_{(2)}-a_{(1)}\otimes b\cdot a_{(2)} \big)\Big)\bullet\Big(\sum_{i,j,\alpha}
	y_{1i\alpha}\otimes y_{2j\alpha}\circ x \Big)\bigg)\\
	=&\; (\id-\hat{\tau})\Big(\Delta(a\cdot b)\bullet\vartheta(x\circ y)
	-\Delta(b\cdot a)\bullet\vartheta(y\circ x))\Big)+\Phi\\
	=&\; \delta([a\otimes x,\; b\otimes y])+\Phi,
\end{align*}
where
\begin{align*}
	\Phi:=&\;(\id-\hat{\tau})\bigg(\Big(\sum_{(b)}b_{(1)}\otimes a\cdot b_{(2)}
	-\Delta(a\cdot b)\Big)\bullet\vartheta(x\circ y) - \Big(\sum_{(a)}a_{(1)}\otimes b\cdot a_{(2)}-\Delta(b\cdot a)\Big)
	\bullet\vartheta(y\circ x)\\[-2mm]
	&\;+\Big(\sum_{(b)}\big(a\cdot b_{(1)}\otimes b_{(2)}-b_{(1)}\otimes a\cdot b_{(2)} \big)
	+\sum_{(a)}a_{(1)}\otimes a_{(2)}\cdot b \Big)\bullet\Big(\sum_{i,j,\alpha}x_{1i\alpha}
	\otimes x_{2j\alpha}\circ y \Big) \\[-2mm]
	&\;-\Big(\sum_{(b)}b_{(1)}\otimes b_{(2)}\cdot a +\sum_{(a)}\big( b\cdot a_{(1)} \otimes
	a_{(2)}-a_{(1)}\otimes b\cdot a_{(2)} \big)\Big)\bullet\Big(\sum_{i,j,\alpha}
	y_{1i\alpha}\otimes y_{2j\alpha}\circ x \Big)\bigg).
\end{align*}
Substituting
\begin{equation*}
	\vartheta(y\circ x)=\sum_{i,j,\alpha}y_{1i\alpha}\otimes y_{2j\alpha}\circ x
	-\sum_{i,j,\alpha}x_{1i\alpha}\otimes x_{2j\alpha}\circ y - \hat{\tau}\Big(
	\sum_{i,j,\alpha}y_{1i\alpha}\otimes y_{2j\alpha}\circ x - \sum_{i,j,\alpha}x_{1i\alpha}
	\otimes x_{2j\alpha}\circ y \Big)-\hat{\tau}\vartheta(x\circ y),
\end{equation*}
for $\vartheta(y \circ x)$ in $\Phi$, we obtain
\begin{align*}
\Phi=&\;(\id-\hat{\tau})\bigg(\Big(\sum_{(b)}b_{(1)}\otimes a\cdot b_{(2)} 
-\Delta(a\cdot b)-\sum_{(a)} b\cdot a_{(2)} \otimes a_{(1)}+\tau\Delta(b\cdot a)\Big)
\bullet(\vartheta(x\circ y))\\[-2mm]
&\;+\Big(\sum_{(b)}\big(a\cdot b_{(1)}\otimes b_{(2)}-b_{(1)}\otimes a\cdot b_{(2)} \big)
-\Delta(b\cdot a)-\tau\Delta(b\cdot a)\\[-3mm]
&\qquad\quad+\sum_{(a)}\big(a_{(1)}\otimes a_{(2)}\cdot b + a_{(1)}\otimes b \cdot a_{(2)}
+ b\cdot a_{(2)} \otimes a_{(1)}\big)\Big)\bullet
\Big(\sum_{i,j,\alpha}x_{1i\alpha}\otimes x_{2j\alpha}\circ y \Big)\\[-2mm]
&\;-\Big(\sum_{(b)}b_{(1)}\otimes b_{(2)}\cdot a +\sum_{(a)}\big(
 b\cdot a_{(1)} \otimes a_{(2)}-a_{(1)}\otimes b\cdot a_{(2)} \\[-3mm]
&\qquad\quad+a_{(1)}\otimes b\cdot a_{(2)}+ b\cdot a_{(2)} \otimes a_{(1)}\big)
-\Delta(b\cdot a)-\tau\Delta(b\cdot a)\Big)\bullet
\Big(\sum_{i,j,\alpha}y_{1i\alpha}\otimes y_{2j\alpha}\circ y \Big)\bigg).
\end{align*}
Since $(A, \cdot, \Delta)$ is a Zinbiel bialgebra, we have
\begin{align*}
&\qquad\tau( \Delta(b\cdot a) - \Delta(a\cdot b)+\sum_{(b)}b_{(1)}\otimes a\cdot b_{(2)} 
-\sum_{(a)} b\cdot a_{(2)} \otimes a_{(1)}=0,\\[-2mm]
& \Delta(b\cdot a)+\tau \Delta(b\cdot a) = \sum_{(a)}(a_{(1)}\otimes a_{(2)}\cdot b 
+a_{(1)}\otimes b\cdot a_{(2)} + b\cdot a_{(2)} \otimes a_{(1)})\\[-4mm]
&\qquad\qquad\qquad\qquad\qquad\qquad+\sum_{(b)}( a\cdot b_{(1)} \otimes b_{(2)}
-b_{(1)}\otimes a\cdot b_{(2)} ),\\[-2mm]
& \Delta(b\cdot a)+\tau \Delta(b\cdot a) =\sum_{(a)}( b\cdot a_{(1)} \otimes a_{(2)}
-a_{(1)}\otimes b\cdot a_{(2)} +a_{(1)}\otimes b\cdot a_{(2)} + b\cdot a_{(2)} \otimes a_{(1)}) \\[-4mm]
&\qquad\qquad\qquad\qquad\qquad\qquad
+\sum_{(b)}b_{(1)}\otimes b_{(2)} \cdot a.
\end{align*}
Hence, $\Phi=0$ and therefore $(\g, [-, -], \delta)$ is a completed Lie bialgebra.

Conversely, suppose that $(B=\oplus_{i\in\bz}B_{i}, \circ, \omega)$ is the quadratic $\bz$-graded Leibniz algebra given in Example~\ref{ex:Leib-quad} and $(\g, [-, -], \delta)$ is a completed Lie bialgebra. 
Then $(A, \cdot)$ is a Zinbiel algebra by Proposition~\ref{pro:L-Z-inflie} and $(A, \Delta)$ is a Zinbiel coalgebra by Proposition~\ref{pro:lie-co}. 
Thus, to prove $(A, \cdot, \Delta)$ is a Zinbiel bialgebra, we only need to show that Eqs.~\eqref{zbialg1} and \eqref{zbialg2} hold.
We consider the expression of $\Phi$. 
Setting  $x=v_{1}\kt^{i}$ and $y=v_{2}\kt^{j}$ in $\Phi$ and comparing the coefficients of $\sum_{k}v_{4}\kt^{k}\otimes v_{1}\kt^{i+j-k}$ in the equation $\Phi=0$, we get
\begin{equation*}
	\Delta(a\cdot b)+\Delta(b\cdot a)=(\id\otimes(\fl_{\cdot}+\fr_{\cdot})(b))(\Delta(a))
	+(\fl_{\cdot}(a)\otimes\id)(\Delta(b)).
\end{equation*}
That is, Eq.~\eqref{zbialg1} holds. 
Setting $x=v_{2}\kt^{i}$ and $y=v_{3}\kt^{j}$ in $\Phi$ and comparing the coefficients of $\sum_{k}v_{3}\kt^{k}\otimes v_{4}\kt^{i+j-k}$ in the equation $\Phi=0$, we get
\begin{equation*}
	\Delta(a\cdot b)+\tau(\Delta(a\cdot b))=(\id\otimes\fr_{\cdot}(b))(\Delta(a))
	+(\fl_{\cdot}(a)\otimes\id)(\Delta(b)+\tau(\Delta(b))).
\end{equation*}
That is, Eq.~\eqref{zbialg2} holds. 
Hence, $(A, \cdot, \Delta)$ is a Zinbiel bialgebra. 
The proof is completed.
\end{proof}

%%%%%%%%%%%%%%%%%%%%%%%%%%%%%%%%%%%%%%%%%%%%%%%%%%%%%%%%%%%%%%%%%%%%%%%%%%%
\subsection{Quasi-triangular Lie bialgebras from Zinbiel bialgebras}\label{ssec:qtLi-Zin}
In this subsection, we consider infinite-dimensional Lie bialgebras constructed from quasi-triangular Zinbiel bialgebras, especially triangular Zinbiel bialgebras.
To this end, we study relationships between solutions of the $\ZYBE$ in a Zinbiel algebra and completed solutions of the $\CYBE$ in the induced $\bz$-graded Lie algebra.

\begin{pro}\label{pro:ZYBE-CYBE}
	Let $(A, \cdot)$ be a Zinbiel algebra, $(B=\oplus_{i\in\bz}B_{i}, \circ, \omega)$ be a quadratic $\bz$-graded Leibniz algebra and $(\g=A\otimes B, [-,-])$ be the induced $\bz$-graded Lie algebra. 
	Let $\{e_{\lambda}\}_{\lambda\in\Lambda}$ be a basis of $B$ consisting of homogeneous elements and $\{f_{\lambda}\}_{\lambda\in\Lambda}$ be the dual basis with respect to $\omega$ consisting of homogeneous elements.
	If $r=\sum_{i}x_{i}\otimes y_{i}\in A\otimes A$ is a solution of the $\ZYBE$ in $(A, \cdot)$ such that $r-\tau(r)$ is Zinb-invariant, then the tensor element
	\begin{equation}
		\hat{r}=\sum_{\lambda\in\Lambda}\sum_{i}(x_{i}\otimes e_{\lambda}) \otimes(y_{i}\otimes f_{\lambda})\in\g\,\hat{\otimes}\,\g   \label{cop-sol}
	\end{equation}
	is a completed solution of the $\CYBE$ in $(\g=A\otimes B, [-,-])$ and $\hat{r}
	+\hat{\tau}(\hat{r})$ is Lie-invariant.
	Furthermore, if $(B=\oplus_{i\in\bz}B_{i}, \circ, \omega)$ is the quadratic $\bz$-graded Leibniz algebra given in Example~\ref{ex:Leib-quad},
	then 
	\begin{align}
		\hat{r} = \sum_{k \in \bz}\sum_{i}&\Big((x_{i} \otimes v_{1}\kt^{k}) \otimes (y_{i} \otimes v_{3}\kt^{-k}) + (x_{i} \otimes v_{2}\kt^{k}) \otimes (y_{i} \otimes v_{4}\kt^{-k}) \label{eq:epcop-sol} \\ 
		&\quad - (x_{i} \otimes v_{3}\kt^{k}) \otimes (y_{i} \otimes v_{1}\kt^{-k}) - (x_{i} \otimes v_{4}\kt^{k}) \otimes (y_{i} \otimes v_{2}\kt^{-k})\Big) \notag 
	\end{align}
	is a completed solution of the $\CYBE$ in $(\g, [-,-])$ and $\hat{r} + \hat{\tau}(\hat{r})$ is Lie-invariant if and only if $r$ is a solution of the $\ZYBE$ in $(A, \cdot)$ such that $r-\tau(r)$ is Zinb-invariant.
\end{pro}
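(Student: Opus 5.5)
The plan mirrors the proof of Proposition~\ref{pro:LYBE-CYBE}, with the roles of the two factors exchanged. The quadratic structure now lives on the Leibniz side $(B,\circ,\omega)$, and the Yang--Baxter element lives on the Zinbiel side $(A,\cdot)$.

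\textbf{Step 1 (identities for the canonical element).} First I derive identities for $\sum_\lambda e_\lambda\otimes f_\lambda\in B\,\hat{\otimes}\,B$ by pairing with homogeneous $u\otimes v\otimes w$ via $\widetilde{\omega}$ and using left nondegeneracy. Invariance and skew-symmetry of $\omega$ give:
\begin{itemize}
\item $\omega(b_1\circ b_2,b_3)=-\omega(b_2,b_1\circ b_3)$;
\item $\omega(b_1, b_2\circ b_3+b_3\circ b_2)=\omega(b_1\circ b_2,b_3)$.
\end{itemize}
From these I aim to get
\begin{equation*}
\sum_{\mu,\lambda} e_\mu\circ e_\lambda\otimes f_\mu\otimes f_\lambda=-\sum_{\mu,\lambda} e_\mu\otimes f_\mu\circ e_\lambda\otimes f_\lambda ,
\end{equation*}
together with the analogous relations expressing every term $e\otimes e\otimes f\circ f$ and $e\otimes e\circ f\otimes f$ through the two basic tensors $\sum e_\mu\circ e_\lambda\otimes f_\mu\otimes f_\lambda$ and $\sum e_\lambda\circ e_\mu\otimes f_\mu\otimes f_\lambda$. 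Similarly, for fixed homogeneous $b$ I get single-index identities such as $\sum_\lambda e_\lambda\otimes f_\lambda\circ b=-\sum_\lambda e_\lambda\circ b\otimes f_\lambda$. All sums are graded, so each component of $B\hat\otimes B\hat\otimes B$ is a finite computation and the manipulations are legitimate.

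\textbf{Step 2 ($\CYBE$).} Next I expand $[\hat r_{12},\hat r_{13}]+[\hat r_{12},\hat r_{23}]+[\hat r_{13},\hat r_{23}]$ using the bracket of Proposition~\ref{pro:L-Z-inflie}. This gives six terms, each of the form (a Zinbiel product tensor in $A^{\otimes 3}$)$\bullet$(a Leibniz product tensor in $B^{\hat\otimes 3}$). Applying Step 1 rewrites every $B$-factor as one of the two basic tensors. Collecting, I expect
\begin{equation*}
\mathbf{C}_{\hat r}=\mathbf{Z}'_r\bullet\Big(\sum e_\mu\circ e_\lambda\otimes f_\mu\otimes f_\lambda\Big)+\mathbf{Z}''_r\bullet\Big(\sum e_\lambda\circ e_\mu\otimes f_\mu\otimes f_\lambda\Big).
\end{equation*}
Here $\mathbf{Z}'_r$ and $\mathbf{Z}''_r$ are combinations of the eight terms in $\mathbf{Z}_r$ (up to a permutation of tensor factors). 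Zinb-invariance of $r-\tau(r)$ is then used, exactly as Leib-invariance was used in Proposition~\ref{pro:LYBE-CYBE}, to identify each of them with $\mathbf{Z}_r$ or a permutation of it. So $\mathbf{C}_{\hat r}=0$.

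\textbf{Step 3 (invariance).} For Lie-invariance of $\hat r+\hat\tau(\hat r)$ I apply $\id\hat\otimes\ad(a\otimes b)+\ad(a\otimes b)\hat\otimes\id$ for homogeneous $b$. Using the single-index identities from Step 1, I move all $B$-dependence onto one or two fixed tensors such as $\sum_\lambda e_\lambda\otimes f_\lambda\circ b$. The $A$-coefficients that remain should then be exactly $(\id\otimes(\fl+\fr)(a)-\fl(a)\otimes\id)(r-\tau(r))$, possibly after applying $\tau$, and these vanish by Zinb-invariance.

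\textbf{Step 4 (converse for $\widehat{V}_4$).} For $(\widehat V_4,\circ,\omega)$, the dual basis $v_1\mapsto v_3$, $v_2\mapsto v_4$, $v_3\mapsto -v_1$, $v_4\mapsto -v_2$ (shifted by $\kt^{-k}$) gives \eqref{eq:epcop-sol} directly. For the converse, I choose specific basis monomials, as in Proposition~\ref{pro:L-Z-inflie} and Theorem~\ref{thm:bialg}, so that each of the independent tensors $\mathbf{Z}_r$ and the invariance expression appears as the coefficient of a single monomial. For example, I take $v_4\kt^{\cdot}\otimes v_1\kt^{\cdot}\otimes v_3\kt^{\cdot}$-type coefficients in $\mathbf{C}_{\hat r}$, and coefficients of $v_3\kt^{k}\otimes v_4\kt^{\cdot}$ after bracketing with $a\otimes v_2\kt^{j}$. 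Because $V$ has few nonzero products, each chosen coefficient should isolate one identity.

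The main obstacle is Step 2: tracking the signs and permutations in the six-term expansion, and verifying that Zinb-invariance converts the two coefficient tensors into $\mathbf{Z}_r$ precisely, rather than into $\mathbf{Z}_r$ plus a leftover term. If a leftover appears, I would try using invariance on the other tensor slot, i.e.\ the $\tau$-image of the invariance relation.
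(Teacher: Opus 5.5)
Your plan has the same architecture as the paper's proof: relations for the canonical element, reduction of $\mathbf C_{\hat r}$ to the two basic tensors
$$T_1=\sum e_\lambda\circ e_\mu\otimes f_\lambda\otimes f_\mu,\qquad T_2=\sum e_\mu\circ e_\lambda\otimes f_\lambda\otimes f_\mu,$$
then Zinb-invariance, then coefficient extraction in $\widehat{V}_4$. However, the Step~1 identities you commit to are false in this setting, and Steps~2--3 rest on them.

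\textbf{Step 1 uses the wrong identities.} What you wrote are the identities valid for a quadratic \emph{Zinbiel} algebra in the proof of Proposition~\ref{pro:LYBE-CYBE}, where right multiplications are skew: $\kappa(a_1\cdot a_2,a_3)=-\kappa(a_1,a_3\cdot a_2)$. In a quadratic Leibniz algebra it is the left multiplications that are skew, while $\omega(b_1\circ b_2,b_3)=\omega(b_1,(\fl+\fr)(b_2)b_3)$. So exchanging the roles of the two factors does not carry the identities over.

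Concretely, pair your identity $\sum e_\mu\circ e_\lambda\otimes f_\mu\otimes f_\lambda=-\sum e_\mu\otimes f_\mu\circ e_\lambda\otimes f_\lambda$ with $u\otimes v\otimes w$. You get $\omega(v\circ w,u)=\omega(u\circ w,v)$, i.e.\ $\omega(w,u\circ v-v\circ u)=0$ for all $u,v,w$. That is commutativity of $\circ$, which fails in $\widehat{V}_4$: pairing with $v_3\otimes v_1\otimes v_2$ gives $1$ versus $0$. Your single-index identity likewise reduces to $\omega(b,u\circ v-v\circ u)=0$, which fails for $b=v_2$.

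The correct relations, which the paper uses, are
\begin{gather*}
\sum e_\lambda\otimes f_\lambda\circ e_\mu\otimes f_\mu=-\sum e_\lambda\otimes e_\mu\otimes f_\lambda\circ f_\mu=T_1+T_2,\\
\sum e_\lambda\otimes e_\mu\otimes f_\mu\circ f_\lambda=T_1,\qquad \sum e_\lambda\otimes e_\mu\circ f_\lambda\otimes f_\mu=-T_2,\\
\sum_\lambda e_\lambda\otimes f_\lambda\circ b=\sum_\lambda (b\circ e_\lambda+e_\lambda\circ b)\otimes f_\lambda .
\end{gather*}

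\textbf{Consequences for Steps 2 and 3.} With your version, the term of $[\hat r_{12},\hat r_{23}]$ carrying $\sum e_\lambda\otimes f_\lambda\circ e_\mu\otimes f_\mu$ would be assigned $-T_1$ instead of $T_1+T_2$. The leftover you anticipate would then genuinely appear, and no reshuffling with invariance in Step~2 removes it; the repair must be made in Step~1.

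With the correct relations, $\mathbf C_{\hat r}=C_1\bullet T_1+C_2\bullet T_2$, where
$$C_1=\sum_{i,j}\big(x_i\cdot x_j\otimes y_i\otimes y_j+x_i\otimes y_i\cdot x_j\otimes y_j-x_i\otimes x_j\otimes y_i\cdot y_j-x_i\otimes x_j\otimes y_j\cdot y_i\big)$$
and $C_2$ is analogous. Two applications of Zinb-invariance of $r-\tau(r)$ then give $C_1=(\tau\otimes\id)\mathbf Z_r$ and $C_2=-(\id\otimes\tau)(\tau\otimes\id)\mathbf Z_r$. The first uses $a=x_j$ acting on slots $1,2$; the second uses $a=y_j$ on slots $1,3$.

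In Step~3, one coefficient is $(\fr(a)\otimes\id+\id\otimes\fr(a))(\tau(r)-r)$. This vanishes as the difference of the invariance identity and its $\tau$-image, using that $r-\tau(r)$ is skew. It is not the invariance identity itself.

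\textbf{Step 4: the invariance monomial you chose gives nothing.} $\ad(a\otimes v_2\kt^j)$ kills $A\otimes v_2\kt^m$ and $A\otimes v_4\kt^m$ and preserves the $v_1$- and $v_3$-types. So after applying $\id\hat{\otimes}\ad(a\otimes v_2\kt^j)+\ad(a\otimes v_2\kt^j)\hat{\otimes}\id$ to $\hat r+\hat\tau(\hat r)$, only $v_1\otimes v_3$- and $v_3\otimes v_1$-type terms survive. The coefficient of $v_3\kt^k\otimes v_4\kt^{l}$ is therefore identically zero. Use $v_3\kt^k\otimes v_1\kt^{l}$ instead, as the paper does; its coefficient is exactly $\pm(\id\otimes(\fl+\fr)(a)-\fl(a)\otimes\id)(r-\tau(r))$.

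Your $\CYBE$ monomial of type $v_4\otimes v_1\otimes v_3$ is fine, since it occurs only in $T_2$. But its coefficient is $C_2$, which equals a permutation of $\mathbf Z_r$ only once Zinb-invariance is known. So extract the invariance first, and only then conclude $\mathbf Z_r=0$.
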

\begin{proof}
	For all $s, t, p\in\Lambda$, we have
	\begin{equation*}
		\widetilde{\omega}(\sum_{\lambda,\mu\in\Lambda} e_{\lambda}\circ e_{\mu} \otimes
		f_{\lambda}\otimes f_{\mu},\ \ e_{s}\otimes e_{t}\otimes e_{p})
		=\omega(e_{t}\circ e_{p},\ \ e_{s})=\widetilde{\omega}(\sum_{\lambda,\mu\in\Lambda}
		e_{\lambda}\otimes e_{\mu}\otimes f_{\mu}\circ f_{\lambda} ,\ \
		e_{s}\otimes e_{t}\otimes e_{p}).
	\end{equation*}
	Since $\widetilde{\omega}$ is left nondegenerate, we get
	\begin{equation*}
		\sum_{\lambda,\mu\in\Lambda} e_{\lambda}\circ e_{\mu} \otimes f_{\lambda}\otimes f_{\mu}=\sum_{\lambda,\mu\in\Lambda}e_{\lambda}\otimes
		e_{\mu}\otimes f_{\mu}\circ f_{\lambda}.
	\end{equation*}
	Similarly, we have
	\begin{align*}
		&\sum_{\lambda,\mu\in\Lambda} e_{\mu}\circ e_{\lambda} \otimes
		f_{\lambda}\otimes f_{\mu}=-\sum_{\lambda,\mu\in\Lambda}e_{\lambda}\otimes e_{\mu}\circ
		f_{\lambda} \otimes f_{\mu}, \\
		&\sum_{\lambda,\mu\in\Lambda}e_{\lambda}\otimes f_{\lambda}\circ e_{\mu} \otimes f_{\mu}=-\sum_{\lambda,\mu\in\Lambda}e_{\lambda}
		\otimes e_{\mu}\otimes f_{\lambda}\circ f_{\mu} = \sum_{\lambda,\mu\in\Lambda}
		e_{\lambda} \circ e_{\mu} \otimes f_{\lambda}\otimes f_{\mu}
		+\sum_{\lambda,\mu\in\Lambda} e_{\mu}\circ e_{\lambda} \otimes f_{\lambda}\otimes f_{\mu}.
	\end{align*}
	Thus,
	\begin{align*}
	&\;[\hat{r}_{12}, \hat{r}_{13}]+[\hat{r}_{12}, \hat{r}_{23}]+[\hat{r}_{13}, \hat{r}_{23}]\\
	=&\; \sum_{i,j}\sum_{\lambda,\mu\in\Lambda}\Big(
	( x_{i}\cdot x_{j} \otimes y_{i}\otimes y_{j})\bullet
	(e_{\lambda}\circ e_{\mu}\otimes f_{\lambda}\otimes f_{\mu})
	-( x_{j}\cdot x_{i} \otimes y_{i}\otimes y_{j})\bullet
	(e_{\mu} \circ e_{\lambda}\otimes f_{\lambda}\otimes f_{\mu})\\[-4mm]
	&\qquad\quad\;\; + (x_{i}\otimes y_{i}\cdot x_{j} \otimes y_{j})\bullet
	(e_{\lambda}\otimes f_{\lambda}\circ e_{\mu} \otimes f_{\mu} )
	-(x_{i}\otimes x_{j}\cdot y_{i} \otimes y_{j})\bullet
	(e_{\lambda}\otimes e_{\mu}\circ f_{\lambda} \otimes f_{\mu})\\[-1mm]
	&\qquad\quad\;\; +(x_{i}\otimes x_{j}\otimes y_{i}\cdot y_{j} )\bullet
	(e_{\lambda}\otimes e_{\mu}\otimes f_{\lambda}\circ f_{\mu} )
	-(x_{i}\otimes x_{j}\otimes y_{j}\cdot y_{i} )\bullet
	(e_{\lambda}\otimes e_{\mu}\otimes f_{\mu}\circ f_{\lambda} )\Big) \\
	=&\; \sum_{i,j}\sum_{\lambda,\mu\in\Lambda}\Big((
	x_{i}\cdot x_{j}\otimes y_{i}\otimes y_{j}+x_{i}\otimes y_{i}\cdot x_{j} \otimes y_{j}
	-x_{i}\otimes x_{j}\otimes y_{i}\cdot y_{j} \\[-4mm]
	&\qquad\qquad\qquad\qquad\qquad\qquad-x_{i}\otimes x_{j}\otimes y_{j}\cdot y_{i} 
	)\bullet(e_{\lambda}\circ e_{\mu}\otimes f_{\lambda}\otimes f_{\mu})\\
	&\qquad\qquad-( x_{j}\cdot x_{i}\otimes y_{i}\otimes y_{j}
	-x_{i}\otimes y_{i}\cdot x_{j} \otimes y_{j}
	-x_{i}\otimes x_{j}\cdot y_{i} \otimes y_{j}\\[-1mm]
	&\qquad\qquad\qquad\qquad\qquad\qquad+x_{i}\otimes x_{j}\otimes y_{i}\cdot y_{j} )
	\bullet(e_{\mu}\circ e_{\lambda}\otimes f_{\lambda}\otimes f_{\mu})\Big).
	\end{align*}
	Since $r-\tau(r)$ is Zinb-invariant, we have
	\begin{align*}
	&\sum_{i,j}\big(x_{i}\otimes x_{j}\cdot y_{i} \otimes y_{j}
	+x_{i}\otimes y_{i}\cdot x_{j} \otimes y_{j}
	- x_{j}\cdot x_{i} \otimes y_{i}\otimes y_{j}\\[-5mm]
	&\qquad\quad -y_{i}\otimes x_{j}\cdot x_{i} \otimes y_{j}
	-y_{i}\otimes x_{i}\cdot x_{j} \otimes y_{j}
	+ x_{j}\cdot y_{i} \otimes x_{i}\otimes y_{j}\big)=0,\\
	&\sum_{i,j}\big(x_{i}\otimes x_{j}\otimes y_{j}\cdot y_{i} 
	+x_{i}\otimes x_{j}\otimes y_{i}\cdot y_{j} 
	- y_{j}\cdot x_{i} \otimes x_{j}\otimes y_{i}\\[-5mm]
	&\qquad\quad -y_{i}\otimes x_{j}\otimes y_{j}\cdot x_{i} 
	-y_{i}\otimes x_{j}\otimes x_{i}\cdot y_{j} 
	+ y_{j}\cdot y_{i} \otimes x_{j}\otimes x_{i}\big)=0,
	\end{align*}
	and therefore
	\begin{align*}
	\mathbf{Z}_{r} &= \sum_{i,j}( y_{i}\cdot x_{j} \otimes x_{i}\otimes y_{j}
	-x_{i}\otimes x_{j}\otimes y_{i}\cdot y_{j} -x_{j}\otimes x_{i}\otimes y_{i}\cdot y_{j} 
	+y_{i}\otimes x_{i}\cdot x_{j} \otimes y_{j})\\[-2mm]
	&=\sum_{i,j}(y_{i}\otimes x_{i}\cdot x_{j} \otimes y_{j}
	-y_{i}\otimes x_{j}\otimes y_{j}\cdot x_{i} -y_{i}\otimes x_{j}\otimes x_{i}\cdot y_{j} 
	+ y_{j}\cdot y_{i} \otimes x_{j}\otimes x_{i}).
	\end{align*}
	Thus, 
	\begin{align*}
		&\;[\hat{r}_{12}, \hat{r}_{13}]+[\hat{r}_{12}, \hat{r}_{23}]+[\hat{r}_{13}, \hat{r}_{23}] \\
		=&\;(\tau\otimes\id)(\mathbf{Z}_{r})\bullet(\sum_{\lambda,\mu\in\Lambda} e_{\lambda} \circ e_{\mu} \otimes f_{\lambda}\otimes f_{\mu})-(\id\otimes\tau)(\tau\otimes\id)(\mathbf{Z}_{r})
		\bullet(\sum_{\lambda,\mu\in\Lambda} e_{\mu} \circ e_{\lambda} \otimes f_{\lambda} \otimes f_{\mu}) \\
		=&\; 0.
	\end{align*}
	That is, $\hat{r}$ is a completed solution of the $\CYBE$ in $(\g, [-,-])$.
	Next, we show that $\hat{r}+\hat{\tau}(\hat{r})$ is Lie-invariant.
	Since $\widetilde{\omega}$ is left nondegenerate, by a similar argument, we obtain
	\begin{align*}
		&\sum_{\lambda\in\Lambda} e_{\mu}\circ e_{\lambda} \otimes f_{\lambda}=\sum_{\lambda\in\Lambda}f_{\lambda}\otimes e_{\mu}\circ e_{\lambda}
		=-\sum_{\lambda\in\Lambda}e_{\lambda}\otimes e_{\mu}\circ f_{\lambda} 
		=-\sum_{\lambda\in\Lambda} e_{\mu}\circ f_{\lambda} \otimes e_{\lambda}, \\
		&\sum_{\lambda\in\Lambda} e_{\lambda}\circ e_{\mu} \otimes f_{\lambda}=-
		\sum_{\lambda\in\Lambda} f_{\lambda}\circ e_{\mu} \otimes e_{\lambda}, \\
		&\sum_{\lambda\in\Lambda}e_{\lambda}\otimes f_{\lambda}\circ e_{\mu} =-
		\sum_{\lambda\in\Lambda}f_{\lambda}\otimes e_{\lambda}\circ e_{\mu}=
		\sum_{\lambda\in\Lambda} e_{\mu}\circ e_{\lambda} \otimes f_{\lambda} +
		\sum_{\lambda\in\Lambda} e_{\lambda} \circ e_{\mu} \otimes f_{\lambda},
	\end{align*} 
	for all $\mu\in\Lambda$.
	Therefore, we have
	\begin{align*}
		&\;\big(\id\otimes\ad(a\otimes e_{\mu})+\ad(a\otimes e_{\mu}) \otimes\id\big)(\hat{r}+\hat{\tau}(\hat{r}))\\
		=&\;\sum_{i}\sum_{\lambda\in\Lambda}\Big((x_{i}\otimes a\cdot y_{i})\bullet
		(e_{\lambda}\otimes e_{\mu} \circ f_{\lambda}) - (x_{i}\otimes y_{i}\cdot a )\bullet(e_{\lambda}\otimes f_{\lambda} \circ e_{\mu} )\\[-4mm]
		&\qquad\qquad+( a \cdot x_{i} \otimes y_{i})\bullet( e_{\mu} \circ e_{\lambda}
		\otimes f_{\lambda})
		-( x_{i} \cdot a \otimes y_{i})\bullet( e_{\lambda}\circ e_{\mu} \otimes f_{\lambda})\\
		&\qquad\qquad+(y_{i}\otimes a\cdot x_{i} )\bullet(f_{\lambda}\otimes
		e_{\mu}\circ e_{\lambda} )
		-(y_{i}\otimes x_{i}\cdot a )\bullet(f_{\lambda}\otimes e_{\lambda}\circ e_{\mu} )\\[-1mm]
		&\qquad\qquad+( a\cdot y_{i} \otimes x_{i})\bullet( e_{\mu}\circ f_{\lambda} 
		\otimes e_{\lambda})
		-( y_{i}\cdot a \otimes x_{i})\bullet( f_{\lambda}\circ e_{\mu} \otimes e_{\lambda})\Big) \\
		=&\;\sum_{i}\sum_{\lambda\in\Lambda}\Big(( y_{i}\cdot a \otimes x_{i} +y_{i}\otimes x_{i}\cdot a - x_{i}\otimes y_{i}\cdot a - x_{i}\cdot a \otimes y_{i} )
		\bullet( e_{\lambda}\circ e_{\mu} \otimes f_{\lambda})\\[-4mm]
		&+(a\cdot x_{i}\otimes y_{i}+y_{i}\otimes a\cdot x_{i} 
		+y_{i}\otimes x_{i}\cdot a -x_{i}\otimes a\cdot y_{i} -x_{i}\otimes y_{i}\cdot a - a\cdot y_{i} \otimes x_{i})
		\bullet(e_{\mu}\circ e_{\lambda}\otimes f_{\lambda})\Big).
	\end{align*}
	Since $r-\tau(r)$ is Zinb-invariant, we have
	\begin{align*}
		&\qquad\qquad \sum_{i} y_{i}\cdot a \otimes x_{i}+y_{i}\otimes x_{i}\cdot a 
		-x_{i}\otimes y_{i}\cdot a -  x_{i}\cdot a \otimes y_{i}=0,\\[-2mm]
		&\sum_{i} a\cdot x_{i} \otimes y_{i}+y_{i}\otimes a\cdot x_{i} +y_{i}\otimes x_{i}\cdot a - x_{i}\otimes a\cdot y_{i} -x_{i}\otimes y_{i}\cdot a - a\cdot y_{i} \otimes x_{i}=0.
	\end{align*}
	Therefore, $\big(\id\otimes\ad(a\otimes e_{\mu})+\ad(a\otimes e_{\mu})
	\otimes\id\big)(\hat{r}+\hat{\tau}(\hat{r}))=0$ and $\hat{r}+\hat{\tau}(\hat{r})$ is Lie-invariant.
	
	Suppose that $(B=\oplus_{i\in\bz}B_{i}, \circ, \omega)$ is the quadratic $\bz$-graded Leibniz
	algebra given in Example~\ref{ex:Leib-quad}, $\hat{r}$ is a completed solution of the $\CYBE$ in $(\g, [-,-])$ and $\hat{r}+\hat{\tau}(\hat{r})$ is Lie-invariant.
	Then, by comparing the coefficients of
	$\sum_{i,j,k}v_{4}\kt^{i}\otimes v_{3}\kt^{j}\otimes v_{1}\kt^{-i-j}$ on both sides
	of the equation $[\hat{r}_{12}, \hat{r}_{13}]+[\hat{r}_{12}, \hat{r}_{23}]
	+[\hat{r}_{13}, \hat{r}_{23}]=0$, we can obtain $\mathbf{Z}_{r}=0$.
	By comparing the coefficients of $\sum_{k,l}v_{3}\kt^{k}\otimes v_{1}\kt^{-k}$ on both sides
	of the equation $\big(\id\otimes\ad(a\otimes v_{2})+\ad(a\otimes v_{2})
	\otimes\id\big)(\hat{r}+\hat{\tau}(\hat{r}))=0$, we can obtain
	\begin{equation*}
		\sum_{i}\big(x_{i}\otimes a\cdot y_{i} +x_{i}\otimes y_{i}\cdot a 
		- a\cdot x_{i}\otimes y_{i} - y_{i}\otimes a\cdot x_{i} - y_{i}\otimes x_{i}\cdot a 
		+ a\cdot y_{i} \otimes x_{i}\big)=0.
	\end{equation*}
	That is, $r-\tau(r)$ is Zinb-invariant.
	Hence, we show that $r$ is a solution of the $\ZYBE$ in $(A, \cdot)$ and $r-\tau(r)$
	is Zinb-invariant.
	The proof is finished.
\end{proof}

In particular, for the skew-symmetric completed solution of the
$\CYBE$ in the induced $\bz$-graded Lie algebra, we have:

\begin{cor}\label{cor:sZYBE-sCYBE}
	Let $(A, \cdot)$ be a Zinbiel algebra, $(B=\oplus_{i\in\bz}B_{i}, \circ, \omega)$ be a quadratic $\bz$-graded Leibniz algebra and $(\g=A\otimes B, [-,-])$ be the induced $\bz$-graded Lie algebra. 
	Let $\{e_{\lambda}\}_{\lambda\in\Lambda}$ be a basis of $B$ consisting of homogeneous elements and $\{f_{\lambda}\}_{\lambda\in\Lambda}$ be the dual basis with respect to $\omega$ consisting of homogeneous elements.
	If $r=\sum_{i}x_{i}\otimes y_{i}\in A\otimes A$
	is a symmetric solution of the $\ZYBE$ in $(A, \cdot)$, then $\hat{r}\in\g\,\hat{\otimes}\,\g$ given by Eq.~\eqref{cop-sol} is a skew-symmetric completed solution of the $\CYBE$ in $(\g, [-,-])$. 
	Furthermore, if $(B=\oplus_{i\in\bz}B_{i}, \circ, \omega)$ is the quadratic $\bz$-graded Leibniz algebra given in Example~\ref{ex:Leib-quad}, then $\hat{r}$ given by Eq.~\eqref{eq:epcop-sol} is a skew-symmetric completed solution of the $\CYBE$ in $(\g, [-,-])$ if and only if $r$ is a symmetric solution of the $\ZYBE$ in $(A, \cdot)$.
\end{cor}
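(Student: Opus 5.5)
The plan is to reduce everything to Proposition~\ref{pro:ZYBE-CYBE} plus a skew-symmetry check, exactly parallel to Corollary~\ref{cor:sLYBE-sCYBE}. If $r$ is symmetric, then $r-\tau(r)=0$ is trivially Zinb-invariant. Proposition~\ref{pro:ZYBE-CYBE} then shows that $\hat{r}$ is a completed solution of the $\CYBE$. So the first part of the statement only needs $\hat{\tau}(\hat{r})=-\hat{r}$.

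For the skew-symmetry, I will first establish the identity
\begin{equation*}
\sum_{\lambda\in\Lambda} e_{\lambda}\otimes f_{\lambda}=-\sum_{\lambda\in\Lambda} f_{\lambda}\otimes e_{\lambda}
\end{equation*}
in $B\,\hat{\otimes}\,B$. Pair both sides with $e_s\otimes e_t$ via $\widetilde{\omega}$. The left side gives $\omega(e_s,e_t)$ up to the convention for dual bases; by skew-symmetry of $\omega$ this is the negative of the corresponding value for the right side. Left nondegeneracy of $\widetilde{\omega}$ then gives the identity. The point needing care is the bookkeeping in the completed tensor product: both sums are homogeneous of total degree $-m$ (by gradedness of $\omega$), so pairing against homogeneous elements is legitimate.

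Using this identity and $\tau(r)=r$,
\begin{equation*}
\hat{\tau}(\hat{r})=\sum_{i,\lambda}(y_i\otimes f_\lambda)\otimes(x_i\otimes e_\lambda)=r\bullet\hat{\tau}\Big(\sum_\lambda e_\lambda\otimes f_\lambda\Big)=-r\bullet\sum_\lambda e_\lambda\otimes f_\lambda=-\hat{r}.
\end{equation*}

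For the converse with $B=\widehat{V}_4$, suppose $\hat{r}$ in Eq.~\eqref{eq:epcop-sol} is a skew-symmetric completed solution of the $\CYBE$.
\begin{itemize}
\item Skew-symmetry means $\hat{r}+\hat{\tau}(\hat{r})=0$, which is trivially Lie-invariant.
\item The "only if" part of Proposition~\ref{pro:ZYBE-CYBE} then gives $\mathbf{Z}_r=0$ and $r-\tau(r)$ Zinb-invariant.
\item It remains to show that $r$ is symmetric. Compute $\hat{r}+\hat{\tau}(\hat{r})$ explicitly from Eq.~\eqref{eq:epcop-sol} and compare the coefficient of $\sum_k v_1\kt^{k}\otimes v_3\kt^{-k}$.
\item The term $(x_i\otimes v_1\kt^k)\otimes(y_i\otimes v_3\kt^{-k})$ contributes $r$. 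The flip of $-(x_i\otimes v_3\kt^{k})\otimes(y_i\otimes v_1\kt^{-k})$ contributes $-\tau(r)$.
\item Hence $r-\tau(r)=0$, so $r$ is symmetric.
\end{itemize}
The main obstacle is getting the signs right in the dual-basis identity. I expect that extracting coefficients in the completed tensor product is routine once the homogeneity is noted.
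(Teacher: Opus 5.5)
Your proposal is correct and essentially matches the paper's proof. The forward direction uses the dual-basis identity $\sum_\lambda e_\lambda\otimes f_\lambda=-\sum_\lambda f_\lambda\otimes e_\lambda$ for skew-symmetry, and Proposition~\ref{pro:ZYBE-CYBE} with $r-\tau(r)=0$ for the $\CYBE$. The converse reads off $r=\tau(r)$ from the $v_1\kt^{k}\otimes v_3\kt^{-k}$ coefficient of $\hat{r}+\hat{\tau}(\hat{r})=0$ and then invokes the ``only if'' part of that proposition; you only swap the order of these two steps and spell out the pairing computation that the paper leaves implicit.
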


\begin{proof}
Note that 
$\sum_{\lambda\in\Lambda}e_{\lambda}\otimes f_{\lambda} =-\sum_{\lambda\in\Lambda}f_{\lambda}\otimes e_{\lambda}$, we have $\hat{r}$ is skew-symmetric. 
Therefore, by Proposition~\ref{pro:ZYBE-CYBE}, we obtain that $\hat{r}$ is a skew-symmetric completed solution of the $\CYBE$ in $(\g, [-,-])$.
Conversely, suppose that $(B=\oplus_{i\in\bz}B_{i}, \circ, \omega)$ is the quadratic $\bz$-graded Leibniz algebra given in Example~\ref{ex:Leib-quad} and $\hat{r}$ is a skew-symmetric completed solution of the $\CYBE$ in $(\g, [-,-])$.
Note that 
\begin{equation*}
	0 = \hat{\tau}(\hat{r}) + \hat{r},
\end{equation*}
we show that $r$ is symmetric by comparing the coefficients of $v_{1}\kt^{k} \otimes v_{3}\kt^{-k}$.
By Proposition~\ref{pro:ZYBE-CYBE} again, we show that $r$ is a symmetric solution of the $\ZYBE$ in $(A, \cdot)$.
\end{proof}

Synthesizing Theorem~\ref{thm:bialg}, Proposition~\ref{pro:ZYBE-CYBE}, and Corollary~\ref{cor:sZYBE-sCYBE} yields another key result of this section.

\begin{thm}\label{thm:indu-sLiebia-zin}
	Let $(A, \cdot, \Delta)$ be a quasi-triangular Zinbiel bialgebra associated with $r$, $(B=\oplus_{i\in\bz}B_{i}$, $\circ$, $\omega)$ be a quadratic $\bz$-graded Leibniz algebra and $(\g=A\otimes B, [-,-])$ be the induced
	$\bz$-graded Lie algebra. 
	Let $\{e_{\lambda}\}_{\lambda\in\Lambda}$ be a basis of $B$ consisting of homogeneous elements and $\{f_{\lambda}\}_{\lambda\in\Lambda}$ be the dual basis with respect to $\omega$ consisting of homogeneous elements.
	Define $\delta$ by Eq.~\eqref{colie} and $\hat{r}$ by Eq.~\eqref{cop-sol}.
	Then $\delta$ coincides $\delta$ defined by Eq.~\eqref{coplie-cobo} through $\hat{r}$.
	Therefore the completed Lie bialgebras $(A\otimes B, [-,-], \delta)$ is quasi-triangular.
	In particular, if $(A, \cdot, \Delta)$ is triangular, then $(A\otimes B, [-,-], \delta)$ is triangular.
\end{thm}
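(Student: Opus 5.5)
The strategy mirrors the proof of Theorem~\ref{thm:indu-sLiebia-lei}, with the roles of the two factors exchanged. The coproduct now comes from $r\in A\otimes A$ via Eq.~\eqref{cobou}, while the coproduct $\vartheta$ on $B$ comes from $\omega$ via Eq.~\eqref{quad-dual}. By Proposition~\ref{pro:qtr-zib}, $(A,\cdot,\Delta)$ is a Zinbiel bialgebra, so Theorem~\ref{thm:bialg} shows that $(\g,[-,-],\delta)$ is a completed Lie bialgebra. By Proposition~\ref{pro:ZYBE-CYBE}, $\hat r$ is a completed solution of the $\CYBE$ and $\hat r+\hat\tau(\hat r)$ is Lie-invariant. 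Hence it suffices to prove
\[
\delta(a\otimes x)=\big(\id\,\hat\otimes\,\ad(a\otimes x)+\ad(a\otimes x)\,\hat\otimes\,\id\big)(\hat r)
\]
for all $a\in A$ and homogeneous $x\in B$. Quasi-triangularity then follows from Proposition~\ref{pro:comlie-bia}. In the triangular case, Corollary~\ref{cor:sZYBE-sCYBE} gives that $\hat r$ is skew-symmetric, so triangularity follows as well.

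First, I would express $\vartheta$ through the dual basis, playing the role that $\Delta(a)=-\sum_j a\cdot e_j\otimes f_j$ played in Theorem~\ref{thm:indu-sLiebia-lei}. Pairing with $u\otimes v$ via $\widetilde\omega$ and using left nondegeneracy, I expect
\[
\vartheta(x)=-\sum_\lambda x\circ e_\lambda\otimes f_\lambda \quad\text{(up to sign and ordering to be checked)},
\]
together with companion identities such as
\[
\sum_\lambda e_\lambda\otimes f_\lambda\circ x=-\sum_\lambda e_\lambda\circ x\otimes f_\lambda
\qquad\text{and}\qquad
\hat\tau\vartheta(x)=\sum_\lambda(x\circ e_\lambda+e_\lambda\circ x)\otimes f_\lambda .
\]
These follow from invariance and skew-symmetry of $\omega$, exactly as the analogous identities in the proof of Proposition~\ref{pro:ZYBE-CYBE}. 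Since $x$ is homogeneous and $\omega$ is graded, all the sums live in $B\hat\otimes B$, and the left nondegeneracy of $\widetilde\omega$ justifies each identification.

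Second, I would expand both sides. The right-hand side gives four terms, each of the form (product in $A$) $\bullet$ (product in $B$). The left-hand side is $(\id-\hat\tau)\big(\Delta(a)\bullet\vartheta(x)\big)$, where I substitute $\Delta(a)=\big(\id\otimes(\fl+\fr)(a)-\fl(a)\otimes\id\big)(r)$ and the dual-basis formula for $\vartheta(x)$. Using the identities from the first step, I would move every $B$-factor into the two normal forms $\sum_\lambda x\circ e_\lambda\otimes f_\lambda$ and $\sum_\lambda e_\lambda\circ x\otimes f_\lambda$, together with their flips. The difference of the two sides should then collapse to a single expression of the form $T\bullet\big(\sum_\lambda e_\lambda\circ x\otimes f_\lambda\big)$ (or its $\hat\tau$-image). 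Here $T$ is, up to sign, $\big(\id\otimes(\fl+\fr)(a)-\fl(a)\otimes\id\big)(r-\tau(r))$, which vanishes because $r-\tau(r)$ is Zinb-invariant.

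The main obstacle is the sign and index bookkeeping in this collapse. One must choose which $B$-normal form to keep so that the surviving $A$-coefficient is precisely the Zinb-invariance expression, and not a combination requiring the $\ZYBE$. I would first check the computation on the example $\widehat V_4$ of Example~\ref{ex:Leib-quad}, comparing the coefficients of $v_3\kt^k\otimes v_4\kt^{-k}$ and similar terms, to fix the signs before writing the general argument.
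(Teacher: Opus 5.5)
Your route is the paper's own: its proof of Theorem~\ref{thm:indu-sLiebia-zin} just says it is analogous to Theorem~\ref{thm:indu-sLiebia-lei}. The collapse you predict is also what actually happens. However, every dual-basis identity you display in the first step is false, so the bookkeeping would not close as written.

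You have transplanted the Zinbiel-side formulas from Theorem~\ref{thm:indu-sLiebia-lei}, changing $\cdot$ to $\circ$. For a quadratic Leibniz algebra it is left multiplication that is $\omega$-skew, $\omega(b_1\circ b_2,b_3)=-\omega(b_2,b_1\circ b_3)$, not right multiplication. Write $P:=\sum_\lambda x\circ e_\lambda\otimes f_\lambda$ and $Q:=\sum_\lambda e_\lambda\circ x\otimes f_\lambda$. Pairing with $u\otimes v$ gives
\[
\vartheta(x)=\sum_\lambda f_\lambda\otimes e_\lambda\circ x=\hat{\tau}(Q)=-P-Q,\qquad
\hat{\tau}(P)=P=-\sum_\lambda e_\lambda\otimes x\circ f_\lambda,\qquad
\sum_\lambda e_\lambda\otimes f_\lambda\circ x=P+Q .
\]
These are the identities already used in the proof of Proposition~\ref{pro:ZYBE-CYBE}. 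Concretely, in $\widehat{V}_4$ with $x=v_1\kt^i$, your $-P$ exceeds $\vartheta(x)$ by $\sum_k v_1\kt^{i+k}\otimes v_4\kt^{-k}$.

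With the correct identities, the step you flagged as the main obstacle is short. For $\delta$ one finds
\[
\delta(a\otimes x)=-\Delta(a)\bullet P-\big(\Delta(a)+\tau\Delta(a)\big)\bullet Q .
\]
For the coboundary one finds
\[
\big(\id\,\hat{\otimes}\,\ad(a\otimes x)+\ad(a\otimes x)\,\hat{\otimes}\,\id\big)(\hat r)=-\Delta(a)\bullet P-\big(\fr_{\cdot}(a)\otimes\id+\id\otimes\fr_{\cdot}(a)\big)(r)\bullet Q .
\]
The $P$-terms cancel. Set $Z_a:=\big(\id\otimes(\fl_{\cdot}+\fr_{\cdot})(a)-\fl_{\cdot}(a)\otimes\id\big)(r-\tau(r))$. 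The $Q$-coefficient of the difference is exactly $-\tau(Z_a)$, so the difference equals $-\hat{\tau}\big(Z_a\bullet\vartheta(x)\big)$. This vanishes by Zinb-invariance of $r-\tau(r)$ alone, with no appeal to the $\ZYBE$.

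So your structural prediction, in its $\hat{\tau}$-image form, is right. Once the first step is redone, the rest of your argument via Propositions~\ref{pro:ZYBE-CYBE} and \ref{pro:comlie-bia} and Corollary~\ref{cor:sZYBE-sCYBE} goes through.
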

\begin{proof}
	The proof is analogous to that of Theorem~\ref{thm:indu-sLiebia-lei}
\end{proof}

\begin{ex}\label{ex:ind-spelie}
Let $(A, \cdot, \Delta)$ be the triangular Zinbiel bialgebra associated with $r=e_{1}\otimes e_{2}+e_{2}\otimes e_{1}$ given in Example~\ref{ex:ZYBE}.
Consider the quadratic $\bz$-graded Leibniz algebra $(\widehat{V}_{4}, \circ, \omega)$ given in
Example~\ref{ex:Leib-quad}. 
By Theorem \ref{thm:bialg}, we get a completed Lie bialgebra
$(A\otimes B, [-,-], \delta)$, where the non-zero products are given by
\begin{align*}
&[e_{1}\otimes v_{1}\kt^{i},\; e_{1}\otimes v_{2}\kt^{j}]=2e_{2}\otimes v_{1}\kt^{i+j}
=-[e_{1}\otimes v_{2}\kt^{j},\; e_{1}\otimes v_{1}\kt^{i}],\\[-1mm]
&[e_{1}\otimes v_{3}\kt^{i},\; e_{1}\otimes v_{1}\kt^{j}]=e_{2}\otimes v_{4}\kt^{i+j}
=-[e_{1}\otimes v_{1}\kt^{j},\; e_{1}\otimes v_{3}\kt^{i}],\\[-1mm]
&[e_{1}\otimes v_{2}\kt^{i},\; e_{1}\otimes v_{3}\kt^{j}]=e_{2}\otimes v_{3}\kt^{i+j}
=-[e_{1}\otimes v_{3}\kt^{j},\; e_{1}\otimes v_{2}\kt^{i}],
\end{align*}
and the non-zero coproducts are given by
\begin{align*}
&\delta(e_{1}\otimes v_{1}\kt^{i})=\sum_{j}\big((e_{2}\otimes v_{4}\kt^{j})\otimes
(e_{2}\otimes v_{1}\kt^{i-j})-(e_{2}\otimes v_{1}\kt^{j})\otimes
(e_{2}\otimes v_{4}\kt^{i-j})\big),\\[-2mm]
&\delta(e_{1}\otimes v_{2}\kt^{i})=\sum_{j}\big((e_{2}\otimes v_{1}\kt^{j})\otimes
(e_{2}\otimes v_{3}\kt^{i-j})-(e_{2}\otimes v_{3}\kt^{j})\otimes
(e_{2}\otimes v_{1}\kt^{i-j})\big),\\[-2mm]
&\delta(e_{1}\otimes v_{3}\kt^{i})=2\sum_{j}\big((e_{2}\otimes v_{3}\kt^{j})\otimes
(e_{2}\otimes v_{4}\kt^{i-j})-(e_{2}\otimes v_{4}\kt^{j})\otimes
(e_{2}\otimes v_{3}\kt^{i-j})\big).
\end{align*}
which is a triangular Lie bialgebra associated with
\begin{align*}
\hat{r}=&\sum_{i}\big((e_{1}\otimes v_{3}\kt^{i})\otimes(e_{2}\otimes v_{1}\kt^{-i})
+(e_{2}\otimes v_{3}\kt^{i})\otimes(e_{1}\otimes v_{1}\kt^{-i})
+(e_{1}\otimes v_{4}\kt^{i})\otimes(e_{2}\otimes v_{2}\kt^{-i})\\[-4mm]
&\qquad +(e_{2}\otimes v_{4}\kt^{i})\otimes(e_{1}\otimes v_{2}\kt^{-i})
-(e_{1}\otimes v_{1}\kt^{i})\otimes(e_{2}\otimes v_{3}\kt^{-i})
-(e_{2}\otimes v_{1}\kt^{i})\otimes(e_{1}\otimes v_{3}\kt^{-i})\\[-1mm]
&\qquad -(e_{1}\otimes v_{2}\kt^{i})\otimes(e_{2}\otimes v_{4}\kt^{-i})
-(e_{2}\otimes v_{2}\kt^{i})\otimes(e_{1}\otimes v_{4}\kt^{-i})\big)
\end{align*}
\end{ex}

\begin{rmk}\label{rmk:finite}
The factorizable Zinbiel bialgebras were discussed in \cite{Wan}. 
Similar to the proof of Theorem \ref{thm:indu-sLiebia-zin}, we can show that $(A\otimes B, [-,-], \delta)$ is a factorizable Lie bialgebra if $(A, \cdot, \Delta)$ is factorizable Zinbiel bialgebra and $(B, \circ, \omega)$ is a quadratic Leibniz algebra.
Moreover, similar to the discussion
in the last part of Subsection \ref{ssec:qtLi-Lib}, we have the following commutative diagram:
$$
\xymatrix@C=1.4cm@R=0.5cm{
\txt{$(A, \cdot, \Delta)$ \\ {\tiny a triangular Zinbiel bialgebra}}\ar[d] &
\txt{$r$ \\ {\tiny a symmetric solution} \\ {\tiny of the $\ZYBE$ in $(A, \cdot)$}}
\ar[d]\ar[r]\ar[l]   &
\txt{$r^{\sharp}$\\ {\tiny an $\mathcal{O}$-operator of $(A, \cdot)$} \\
{\tiny associated to $(A^{\ast}, \fl_{\cdot}^{\ast}+\fr_{\cdot}^{\ast}, -\fr_{\cdot}^{\ast})$}}
\ar[d]^-{\mbox{$-\otimes r_{0}^{\sharp}$}} \\
\txt{$(A\otimes B, [-,-], \delta)$ \\
{\tiny a triangular Lie bialgebra}}   &
\txt{$\hat{r}$ \\ {\tiny a skew-symmetric solution} \\ {\tiny of the $\CYBE$ in
$(A\otimes B, [-,-])$}} \ar[r]\ar[l]  &
\txt{$\hat{r}^{\sharp}=r^{\sharp}\otimes r_{0}^{\sharp}$ \\
{\tiny an $\mathcal{O}$-operator of $(A\otimes B, [-,-])$ } \\
{\tiny associated to $((A\otimes B)^{\ast}, -\ad^{\ast})$}}}
$$
\end{rmk}

\smallskip
%%%%%%%%%%%%%%%%%%%%%%%%%%%%%%%%%%%%%%%%%%%%%%%%%%%%%%%%%%%%%%%%%%%%%%%%%%%
\subsection{Quasi-Frobenius Lie algebras from quasi-Frobenius Zinbiel algebras}
\label{ssec:QFliealg}
In this subsection, we provide a construction of quasi-Frobenius Lie algebras from quasi-Frobenius Zinbiel algebras and quadratic Leibniz algebras.

\begin{defi}\label{def:qFZib}
Let $(A, \cdot)$ be a Zinbiel algebra. 
If there is a symmetric nondegenerate bilinear form $\varpi$ on $A$ satisfying
\begin{align}
\varpi(a_{1}\cdot a_{2},\; a_{3})-\varpi(a_{1}\cdot a_{3}+a_{3}\cdot a_{1},\; a_{2}) + \varpi(a_{3}\cdot a_{2},\; a_{1})=0, \;\; \forall a_{1}, a_{2}, a_{3}\in A. \label{qFZ}
\end{align}
then $(A, \cdot, \varpi)$ is called a {\bf quasi-Frobenius Zinbiel algebra}.
\end{defi}

There is a close relationship between quasi-Frobenius Zinbiel algebras and solutions of the $\ZYBE$ in Zinbiel algebras.
\begin{pro}\label{pro:QFZib}
	Let $(A, \cdot)$ be a Zinbiel algebra and  $\varpi$ be a nondegenerate bilinear form on $A$.
	Suppose that $\{e_{1}, e_{2},\cdots, e_{n}\}$ is a basis of $A$ and $\{f_{1}, f_{2},\cdots, f_{n}\}$ is the dual basis with respect to $\varpi$.
	Then $r=\sum_{i}e_{i}\otimes f_{i}\in A\otimes A$ is a symmetric solution of the $\ZYBE$ in $(A, \cdot)$ if and only if $(A, \cdot, \varpi)$ is a quasi-Frobenius Zinbiel algebra.
\end{pro}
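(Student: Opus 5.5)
The plan is to translate both conditions into statements about the linear map $r^{\sharp}: A^{\ast}\rightarrow A$ and about the bilinear form $\varpi$, and then compare them. The form $\varpi$ determines a linear isomorphism $\varphi: A\rightarrow A^{\ast}$, $\langle\varphi(a_{1}), a_{2}\rangle=\varpi(a_{1}, a_{2})$. Since $\{f_{i}\}$ is dual to $\{e_{i}\}$, we have $\varpi(f_{j}, e_{i})=\delta_{ij}$, and a direct computation gives $r^{\sharp}=\varphi^{-1}$ up to the harmless transpose coming from the order of arguments. Three consequences follow.

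\textbf{Step 1 (symmetry).} $r$ is symmetric if and only if $\varpi$ is symmetric. Indeed, $\tau(r)^{\sharp}$ corresponds in the same way to the transposed form $\varpi(a_{2}, a_{1})$.

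\textbf{Step 2 (reduction to an $\mathcal{O}$-operator).} Assume $\varpi$ (equivalently $r$) is symmetric. By the Zinbiel analogue of Propositions~\ref{pro:o-lie} and \ref{pro:o-leib} (the top right arrow in Remark~\ref{rmk:finite}, from \cite{Wan}), $r$ is a solution of the $\ZYBE$ if and only if $r^{\sharp}$ is an $\mathcal{O}$-operator of $(A,\cdot)$ associated to the coregular representation $(A^{\ast}, \fl_{\cdot}^{\ast}+\fr_{\cdot}^{\ast}, -\fr_{\cdot}^{\ast})$. That is,
\[ r^{\sharp}(\xi)\cdot r^{\sharp}(\eta)=r^{\sharp}\big((\fl_{\cdot}^{\ast}+\fr_{\cdot}^{\ast})(r^{\sharp}(\xi))\eta-\fr_{\cdot}^{\ast}(r^{\sharp}(\eta))\xi\big). \]

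\textbf{Step 3 (invertibility).} Since $r^{\sharp}=\varphi^{-1}$ is invertible, put $\xi=\varphi(a_{1})$ and $\eta=\varphi(a_{3})$, apply $\varphi$ to both sides, and pair the result with $a_{2}$. The left side becomes $\varpi(a_{1}\cdot a_{3}, a_{2})$. The right side becomes $\langle\eta, (a_{1}\cdot a_{2}+a_{2}\cdot a_{1})\rangle$-type terms minus $\langle\xi, a_{2}\cdot a_{3}\rangle$-type terms, i.e.\ expressions of the form $\varpi(a_{3}, a_{1}\cdot a_{2}+a_{2}\cdot a_{1})-\varpi(a_{1}, \ldots)$, with the exact placement of the dualized operators still to be fixed.

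Using the symmetry of $\varpi$, the resulting identity should, after renaming indices, be exactly Eq.~\eqref{qFZ}. Because $\varphi$ is bijective and the $a_{i}$ are arbitrary, the two conditions are equivalent.

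If the cited $\mathcal{O}$-operator characterization is not considered available, the fallback is a direct computation. Expand $\mathbf{Z}_{r}$ with $r=\sum_i e_{i}\otimes f_{i}$ and pair against $\varphi(a_{1})\otimes\varphi(a_{2})\otimes\varphi(a_{3})$. Each of the eight terms then collapses through $\sum_i\varpi(a,e_{i})f_{i}=a$-type identities into $\varpi$ of a product. Using symmetry, the terms regroup into two copies of the left side of Eq.~\eqref{qFZ} with permuted arguments, so nondegeneracy gives the equivalence.

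The main obstacle is this index and sign bookkeeping: matching the dualized operators $\fl^{\ast}$, $\fr^{\ast}$ and the order of arguments in $\varpi$ so that the identity obtained is precisely Eq.~\eqref{qFZ}, rather than a permuted variant. I expect to have to use the symmetry of $\varpi$ together with a cyclic relabeling $a_{1}\leftrightarrow a_{3}$ to land on the stated form.
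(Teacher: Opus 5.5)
Your proposal is correct, but your main route differs from the paper's. The paper does not use $\mathcal{O}$-operators. After observing that $r$ is symmetric iff $\varpi$ is, it expands each of the eight terms of $\mathbf{Z}_{r}$ in the basis $\{e_{k}\otimes f_{j}\otimes f_{i}\}$ with $\varpi$-coefficients. By symmetry, $r_{13}\cdot r_{21}$ cancels $r_{13}\cdot r_{12}$ and $r_{23}\cdot r_{12}$ cancels $r_{23}\cdot r_{21}$. The surviving coefficient is the left side of Eq.~\eqref{qFZ} at $(a_{1},a_{2},a_{3})=(f_{k},e_{i},e_{j})$. Your fallback is this same computation in dual form, and it produces one copy of \eqref{qFZ}, not two.

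Your main argument instead uses that $r^{\sharp}=\varphi^{-1}$ exactly. With $\langle\varphi(a_{1}),a_{2}\rangle=\varpi(a_{1},a_{2})$ and $\varpi(f_{j},e_{i})=\delta_{ij}$, no transpose is needed. You then dualize the $\mathcal{O}$-operator identity, and the substitution you describe gives
\[ \varpi(a_{1}\cdot a_{3},a_{2})=\varpi(a_{3},a_{1}\cdot a_{2}+a_{2}\cdot a_{1})-\varpi(a_{1},a_{2}\cdot a_{3}). \]
This is exactly Eq.~\eqref{qFZ} after using the symmetry of $\varpi$ and the swap $a_{2}\leftrightarrow a_{3}$ (not $a_{1}\leftrightarrow a_{3}$).

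What your route buys is conceptual. It shows that quasi-Frobenius forms are precisely the inverses of invertible symmetric $\mathcal{O}$-operators for the coregular representation. This parallels the Lie case, where symplectic forms correspond to invertible skew-symmetric solutions of the $\CYBE$.

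The price is that the key equivalence is imported. The paper only draws it as a one-way arrow in Remark~\ref{rmk:finite}, whereas your ``if'' direction needs the converse. The converse does hold: for symmetric $r$, a four-term check gives
\[ \langle \xi\otimes\zeta\otimes\eta,\ \mathbf{Z}_{r}\rangle=\big\langle \zeta,\ r^{\sharp}(\xi)\cdot r^{\sharp}(\eta)-r^{\sharp}\big((\fl_{\cdot}^{\ast}+\fr_{\cdot}^{\ast})(r^{\sharp}(\xi))\eta-\fr_{\cdot}^{\ast}(r^{\sharp}(\eta))\xi\big)\big\rangle. \]
Verifying this identity is the same bookkeeping as the paper's proof, so once your argument is made self-contained the two routes cost about the same.
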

\begin{proof}
	It is easy to see that $r=\sum_{i}e_{i}\otimes f_{i}$ is symmetric if and only if $\varpi$ is symmetric.
	Suppose that $r$ is symmetric.
	Note that
	\begin{align*}
		r_{13}\cdot r_{21}&=\sum_{i,j} e_{i}\cdot f_{j} \otimes e_{j}\otimes f_{i} 
		=\sum_{i,j} e_{i}\cdot e_{j} \otimes f_{j}\otimes f_{i} 
		=\sum_{i,j,k}\varpi(e_{i}\cdot e_{j},\; f_{k})e_{k}\otimes f_{j}\otimes f_{i},\\[-2mm]
		r_{21}\cdot r_{13}&=\sum_{i,j} f_{j}\cdot e_{i} \otimes e_{j}\otimes f_{i}
		=\sum_{i,j} e_{j}\cdot e_{i} \otimes f_{j}\otimes f_{i} 
		=\sum_{i,j,k}\varpi(e_{j}\cdot e_{i},\; f_{k})e_{k}\otimes f_{j}\otimes f_{i},\\[-2mm]
		r_{12} \cdot r_{23}&=\sum_{i,k}e_{k}\otimes f_{k}\cdot e_{i} \otimes f_{i} =\sum_{i,j,k}\varpi(f_{k}\cdot e_{i},\; e_{j})e_{k}\otimes f_{j}\otimes f_{i},\\[-2mm]
		r_{23}\cdot r_{12}&=\sum_{i,k}e_{k}\otimes e_{i}\cdot f_{k} \otimes f_{i} 
		=\sum_{i,j,k}\varpi(e_{i}\cdot f_{k},\; e_{j})e_{k}\otimes f_{j}\otimes f_{i},\\[-2mm]
		r_{13}\cdot r_{23}&=\sum_{k,j}e_{k}\otimes e_{j}\otimes f_{k}\cdot f_{j} 
		=\sum_{k,j}e_{k}\otimes f_{j}\otimes f_{k}\cdot e_{j}
		=\sum_{i,j,k}\varpi(f_{k}\cdot e_{j},\; e_{i})e_{k}\otimes f_{j}\otimes f_{i},\\[-2mm]
		r_{23}\cdot r_{13}&=\sum_{k,j}e_{k}\otimes e_{j}\otimes f_{j}\cdot f_{k} 
		=\sum_{k,j}e_{k}\otimes f_{j}\otimes e_{j}\cdot f_{k}
		=\sum_{i,j,k}\varpi(e_{j}\cdot f_{k},\; e_{i})e_{k}\otimes f_{j}\otimes f_{i},\\[-2mm]
		r_{13}\cdot r_{12}&=\sum_{i,j} e_{i}\cdot e_{j} \otimes f_{j}\otimes f_{i}
		=\sum_{i,j,k}\varpi(e_{i}\cdot e_{j},\; f_{k})e_{k}\otimes f_{j}\otimes f_{i},\\[-2mm]
		r_{23}\cdot r_{21}&=\sum_{i,k}f_{k}\otimes e_{i} \cdot e_{k} \otimes f_{i}
		=\sum_{i,k}e_{k}\otimes e_{i} \cdot f_{k} \otimes f_{i}
		=\sum_{i,j,k}\varpi(e_{i}\cdot f_{k},\; e_{j})e_{k}\otimes f_{j}\otimes f_{i},
	\end{align*}
	we have
	\begin{align*}
\mathbf{Z}_{r}&=r_{13}\cdot r_{21}+r_{21}\cdot r_{13}+r_{12}\cdot r_{23}+r_{23}\cdot r_{12}
-r_{13}\cdot r_{23}-r_{23}\cdot r_{13}-r_{13}\cdot r_{12}-r_{23}\cdot r_{21}\\
&=\sum_{i,j,k}\Big(\varpi(e_{j}\cdot e_{i},\; f_{k})+\varpi(f_{k}\cdot e_{i},\; e_{j})
-\varpi(f_{k}\cdot e_{j}+e_{j}\cdot f_{k},\; e_{i})\Big)e_{k}\otimes f_{j}\otimes f_{i}.
\end{align*}
Thus, $r$ is a solution of the $\ZYBE$ in $(A, \cdot)$ if and only if Eq.~\eqref{qFZ} holds. 
The proof is completed.
\end{proof}

\cite{HZ} introduced the notion of pre-Zinbiel algebras, established the relationship between pre-Zinbiel algebras and $\mathcal{O}$-operators on a Zinbiel algebra, and also presented some special solutions of the $\ZYBE$.

\begin{defi}[\cite{HZ}]\label{def:pZib}
	A {\bf pre-Zinbiel algebra} is a triple $(A, \tl, \tr)$, where $A$ is a vector space and $\tl, \tr: A \otimes A \rightarrow A$ are binary operations satisfying
	\begin{align*}
	x\tr(y\tr z)&=(x\tr y)\tr z+(x\tl y)\tr z+(y\tr x)\tr z+(y\tl x)\tr z,\\
	x\tr(z\tl y)&=z\tl(x\tr y)+z\tl(x\tl y)=(x\tr z)\tl y+(z\tl x)\tl y, \;\; \forall x, y, z\in A.
	\end{align*}
\end{defi}

Let $(A, \tl, \tr)$ be a pre-Zinbiel algebra, then the binary operation $\cdot: A \otimes A \rightarrow A$ defined by
\begin{equation*}
	x\cdot y=x\tl y+x\tr y, \;\; \forall x, y \in A,
\end{equation*}
endows $A$ with a Zinbiel algebra structure $(A, \cdot)$, called the {\bf sub-adjacent Zinbiel algebra} of $(A, \tl, \tr)$. 
Moreover, both $(A, \fl_{\tr}, \fr_{\tl})$ and $(A^{\ast}, \fl_{\tr}^{\ast}+\fr_{\tl}^{\ast},
-\fr_{\tl}^{\ast})$ are representations of $(A, \cdot)$. 

\begin{pro}[\cite{HZ}]\label{pro:pZib-QF}
Let $(A, \tl, \tr)$ be a pre-Zinbiel algebra and $(A, \cdot)$ be the sub-adjacent Zinbiel algebra.  
Let $\{e_{1}, e_{2},\cdots, e_{n}\}$ be a basis of $A$ and $\{e_{1}^{\ast}, e_{2}^{\ast},\cdots, e_{n}^{\ast}\}\subseteq A^{\ast}$ be the dual basis.
Then
\begin{equation*}
	r:=\sum_{i}(e_{i}\otimes e_{i}^{\ast}+e_{i}^{\ast}\otimes e_{i}),
\end{equation*}
is a symmetric solution of the $\ZYBE$ in $(A \ltimes_{\fl_{\tr}^{\ast}+\fr_{\tl}^{\ast}, -\fr_{\tl}^{\ast}} A^{\ast}, \cdot)$.
\end{pro}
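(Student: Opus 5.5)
The plan is to reduce the statement to Proposition~\ref{pro:QFZib}. That is, I will exhibit $r$ as the tensor $\sum e_\lambda\otimes f_\lambda$ attached to a symmetric nondegenerate form on $A\oplus A^{\ast}$, and then check that this form makes the semidirect product a quasi-Frobenius Zinbiel algebra.

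First, since $(A^{\ast}, \fl_{\tr}^{\ast}+\fr_{\tl}^{\ast}, -\fr_{\tl}^{\ast})$ is a representation of the sub-adjacent Zinbiel algebra $(A,\cdot)$, the semidirect product $(A\ltimes A^{\ast},\cdot)$ is a Zinbiel algebra. Its product is
\begin{equation*}
(a+\xi)\cdot(b+\eta)=a\cdot b+(\fl_{\tr}^{\ast}+\fr_{\tl}^{\ast})(a)\eta-\fr_{\tl}^{\ast}(b)\xi .
\end{equation*}
On $A\oplus A^{\ast}$ take the bilinear form
\begin{equation*}
\varpi(a+\xi,\; b+\eta)=\langle\xi, b\rangle+\langle\eta, a\rangle,
\end{equation*}
which is clearly symmetric and nondegenerate. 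With respect to $\varpi$, the basis $\{e_1,\dots,e_n,e_1^{\ast},\dots,e_n^{\ast}\}$ has dual basis $\{e_1^{\ast},\dots,e_n^{\ast},e_1,\dots,e_n\}$. Hence the element $\sum_\lambda e_\lambda\otimes f_\lambda$ of Proposition~\ref{pro:QFZib} is exactly $r=\sum_i(e_i\otimes e_i^{\ast}+e_i^{\ast}\otimes e_i)$. By that proposition, it suffices to verify Eq.~\eqref{qFZ} for $\varpi$.

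Eq.~\eqref{qFZ} is trilinear, so I check it on arguments lying in $A$ or in $A^{\ast}$. If all three arguments lie in $A$, every term vanishes because $A\cdot A\subseteq A$ and $\varpi(A,A)=0$. If at least two lie in $A^{\ast}$, every term vanishes because $A^{\ast}\cdot A^{\ast}=0$, $A\cdot A^{\ast}+A^{\ast}\cdot A\subseteq A^{\ast}$ and $\varpi(A^{\ast},A^{\ast})=0$. So only three cases remain, according to which slot carries the element $\xi\in A^{\ast}$. In each case I expand using the product above and the convention $\langle\mu^{\ast}(a)\xi,u\rangle=\langle\xi,\mu(a)u\rangle$.

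For example, take $a_1=\xi$, $a_2=b$, $a_3=c$. The three terms of Eq.~\eqref{qFZ} are $-\langle\xi, c\tl b\rangle$, then $-\langle\xi, c\tr b\rangle$ (since $\xi\cdot c+c\cdot\xi=\fl_{\tr}^{\ast}(c)\xi$), and $\langle\xi, c\tl b+c\tr b\rangle$. They sum to $0$ using only $x\cdot y=x\tl y+x\tr y$.

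The cases $a_2=\xi$ and $a_3=\xi$ are treated in the same way. I expect these two to be the main (though routine) obstacle, because the sign conventions for the dual actions have to be tracked carefully. I anticipate that, like the first case, they reduce either to the splitting $\cdot=\tl+\tr$ alone or to one of the two pre-Zinbiel axioms. If a leftover term such as $\langle\xi,(x\tr z)\tl y\rangle$ appears, I will cancel it with the second axiom $x\tr(z\tl y)=(x\tr z)\tl y+(z\tl x)\tl y$. Once all cases vanish, $(A\ltimes A^{\ast},\cdot,\varpi)$ is quasi-Frobenius, and Proposition~\ref{pro:QFZib} gives that $r$ is a symmetric solution of the $\ZYBE$.
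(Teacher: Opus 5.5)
Your argument is correct, but it is not how the paper handles this statement. The paper gives no proof. It quotes the result from \cite{HZ}, where it arises from the link between pre-Zinbiel algebras and $\mathcal{O}$-operators: the identity map $A\to A$ is an $\mathcal{O}$-operator of $(A,\cdot)$ associated to $(A,\fl_{\tr},\fr_{\tl})$, and the symmetrization of such an operator is a symmetric solution of the $\ZYBE$ in the semi-direct product with the dual representation.

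The paper then combines the proposition with Proposition~\ref{pro:QFZib} to derive Corollary~\ref{cor:pZib-QF}. You run this in reverse: you check Eq.~\eqref{qFZ} for $\varpi$ directly and then use the other implication of Proposition~\ref{pro:QFZib}. Since you never invoke the corollary, there is no circularity.

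The two cases you left open are easier than you expected, and they close the argument.
For $(a_1,a_2,a_3)=(a,\xi,c)$ the three terms add up to $\langle\xi,\,a\tr c+c\tl a+c\tr a+a\tl c-a\cdot c-c\cdot a\rangle=0$.
For $(a,b,\xi)$ they add up to $\langle\xi,\,a\cdot b-a\tr b-a\tl b\rangle=0$.
So no pre-Zinbiel axiom is needed in the verification itself. The axioms enter only through the fact, asserted in the paper, that $(A^{\ast},\fl_{\tr}^{\ast}+\fr_{\tl}^{\ast},-\fr_{\tl}^{\ast})$ is a representation, i.e.\ that the semi-direct product is a Zinbiel algebra at all. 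This parallels the $\mathcal{O}$-operator picture exactly, since the $\mathcal{O}$-operator identity for $\id$ reads $a\cdot b=a\tr b+a\tl b$.

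The two routes buy different things. Yours is elementary and self-contained within the paper, and it yields the proposition and Corollary~\ref{cor:pZib-QF} at once. The $\mathcal{O}$-operator route is more general: it produces a symmetric solution $T+\tau(T)$ (with $T$ viewed in $A\otimes V^{\ast}$) from any $\mathcal{O}$-operator $T\colon V\to A$ associated to any representation. The present statement is the special case $T=\id$.
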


From Propositions~\ref{pro:QFZib} and \ref{pro:pZib-QF}, we can use pre-Zinbiel algebras to provide many examples of quasi-Frobenius Zinbiel algebras, as stated in the following corollary.

\begin{cor}\label{cor:pZib-QF}
Let $(A, \tl, \tr)$ be a pre-Zinbiel algebra and $(A, \cdot)$ be the sub-adjacent Zinbiel algebra. Then $(A \ltimes_{\fl_{\tr}^{\ast}+\fr_{\tl}^{\ast}, -\fr_{\tl}^{\ast}} A^{\ast}, \cdot, \varpi)$ is a quasi-Frobenius Zinbiel algebra with $\varpi$ defined by
\begin{equation}
\varpi((x, \xi),\; (y, \zeta))=\langle\xi, y\rangle+\langle\zeta, x\rangle, \;\; \forall x, y\in A, \xi, \zeta\in A^{\ast}. \label{biform}
\end{equation}
\end{cor}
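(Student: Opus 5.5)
The corollary follows by combining Proposition~\ref{pro:pZib-QF} with Proposition~\ref{pro:QFZib}, applied to the Zinbiel algebra $D:=A \ltimes_{\fl_{\tr}^{\ast}+\fr_{\tl}^{\ast}, -\fr_{\tl}^{\ast}} A^{\ast}$. First, $D$ is indeed a Zinbiel algebra. The reason is that $(A^{\ast}, \fl_{\tr}^{\ast}+\fr_{\tl}^{\ast}, -\fr_{\tl}^{\ast})$ is a representation of the sub-adjacent Zinbiel algebra $(A,\cdot)$, as recalled just before Proposition~\ref{pro:pZib-QF}. By the semi-direct product construction following Definition~\ref{def:zrep}, $(D,\cdot)$ is therefore a Zinbiel algebra.

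Next, $\varpi$ in Eq.~\eqref{biform} is symmetric by inspection. It is also nondegenerate: if $\varpi((x,\xi),(y,\zeta))=0$ for all $(y,\zeta)$, then taking $\zeta=0$ gives $\xi=0$, and taking $y=0$ gives $x=0$. We then take the basis $\{e_1,\dots,e_n,e_1^{\ast},\dots,e_n^{\ast}\}$ of $D$ and compute its dual basis with respect to $\varpi$, normalized as in Proposition~\ref{pro:QFZib} (i.e.\ $\varpi(e_\lambda, f_\mu)=\delta_{\lambda\mu}$, or the symmetric convention, which agrees since $\varpi$ is symmetric). Since $\varpi(e_i, e_j^{\ast})=\delta_{ij}$ and $\varpi(e_i,e_j)=0=\varpi(e_i^{\ast},e_j^{\ast})$, the dual of $e_i$ is $e_i^{\ast}$ and the dual of $e_i^{\ast}$ is $e_i$.

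Hence the element $\sum_\lambda e_\lambda\otimes f_\lambda$ of Proposition~\ref{pro:QFZib} equals
\begin{equation*}
	r=\sum_{i}(e_{i}\otimes e_{i}^{\ast}+e_{i}^{\ast}\otimes e_{i}).
\end{equation*}
By Proposition~\ref{pro:pZib-QF}, this $r$ is a symmetric solution of the $\ZYBE$ in $(D,\cdot)$. The ``if'' direction of Proposition~\ref{pro:QFZib} then yields that $(D,\cdot,\varpi)$ is a quasi-Frobenius Zinbiel algebra.

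The only step needing care is matching the dual-basis convention of Proposition~\ref{pro:QFZib} so that $r$ appears exactly as in Proposition~\ref{pro:pZib-QF}; symmetry of $\varpi$ removes any ambiguity. Beyond that, no real obstacle is expected.
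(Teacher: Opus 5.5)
Your proposal is correct and follows the paper's proof. Like the paper, you identify the $\varpi$-dual basis of $\{e_i, e_i^{\ast}\}$ as $\{e_i^{\ast}, e_i\}$ so that $r=\sum_i(e_i\otimes e_i^{\ast}+e_i^{\ast}\otimes e_i)$, take $r$ to be a symmetric solution of the $\ZYBE$ by Proposition~\ref{pro:pZib-QF}, and conclude via Proposition~\ref{pro:QFZib}. Your extra checks (that the semi-direct product is Zinbiel, and that $\varpi$ is symmetric and nondegenerate) are left implicit in the paper and are correct.
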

\begin{proof}
	Let $\{e_{1}, e_{2},\cdots, e_{n}\}$ be a basis of $A$ and $\{e_{1}^{\ast}, e_{2}^{\ast}, \cdots, e_{n}^{\ast}\}$ be the dual basis. 
	Then, $\{e_{1}, \cdots, e_{n}$, $e_{1}^{\ast}, \cdots, e_{n}^{\ast}\}$ is a basis
	of $A \oplus A^{\ast}$, and $\{e_{1}^{\ast}, \cdots, e_{n}^{\ast}, e_{1}, \cdots, e_{n}\}$ is the dual basis of $\{e_{1}, \cdots, e_{n}$, $e_{1}^{\ast}, \cdots, e_{n}^{\ast}\}$ with respect to $\varpi$. 
	By Proposition~\ref{pro:pZib-QF}, $r=\sum_{i}(e_{i}\otimes e_{i}^{\ast}+e_{i}^{\ast}\otimes e_{i})$ is a symmetric solution of the $\ZYBE$ in $(A \ltimes_{\fl_{\tr}^{\ast}+\fr_{\tl}^{\ast}, -\fr_{\tl}^{\ast}} A^{\ast}, \cdot)$.
	Then, by Proposition~\ref{pro:QFZib}, $(A \ltimes_{\fl_{\tr}^{\ast}+\fr_{\tl}^{\ast}, -\fr_{\tl}^{\ast}} A^{\ast}, \cdot, \varpi)$ is a quasi-Frobenius Zinbiel algebra.
\end{proof}

\begin{defi}[\cite{AB, Fis, HBG}]\label{def:Fro-Lie}
Let $(\g=\oplus_{i\in\bz}\g_{i}, [-,-])$ be a $\bz$-graded Lie algebra. 
If there is a skew-symmetric nondegenerate bilinear form $\mathcal{B}$ on $(\g=\oplus_{i\in\bz}\g_{i}, [-,-])$ satisfying
\begin{equation*}
	\mathcal{B}([g_{1}, g_{2}], g_{3})
	+\mathcal{B}([g_{3}, g_{1}], g_{2})+\mathcal{B}([g_{2}, g_{3}], g_{1})=0, \;\; \forall g_{1}, g_{2}, g_{3}\in\g
\end{equation*}
then $(\g=\oplus_{i\in\bz}\g_{i}, [-,-],
\mathcal{B})$ is called a {\bf quasi-Frobenius $\bz$-graded Lie algebra}.
When $\g=\g_{0}$, it is simply called a {\bf quasi-Frobenius Lie algebra} or {\bf symplectic Lie algebra}.
\end{defi}
 
To end this section, we present a construction of quasi-Frobenius Lie algebras through quasi-Frobenius Zinbiel algebras and quadratic $\bz$-graded Leibniz algebras.

\begin{pro}\label{pro:quasi-frob}
Let $(A, \cdot, \varpi)$ be a quasi-Frobenius Zinbiel algebra, $(B, \circ, \omega)$ be a
quadratic $\bz$-graded Leibniz algebra and $(A\otimes B, [-,-])$ be the induced $\bz$-graded
Lie algebra. Define a bilinear form $\mathcal{B}$ on $(A\otimes B, [-,-])$ by
\begin{align}
\mathcal{B}(a_{1}\otimes b_{1},\; a_{2}\otimes b_{2})
=\varpi(a_{1}, a_{2})\omega(b_{1}, b_{2}), \;\; \forall a_{1}, a_{2}\in A, \; b_{1}, b_{2}\in B. \label{QFli}
\end{align}
Then $(A\otimes B, [-,-], \mathcal{B})$ is a quasi-Frobenius $\bz$-graded Lie algebra.
\end{pro}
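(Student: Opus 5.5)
The plan is to check the three defining properties of a quasi-Frobenius $\bz$-graded Lie algebra in turn: skew-symmetry, nondegeneracy, and the cyclic cocycle condition. The first two are formal. The cocycle condition is handled by reducing the Leibniz factor, via invariance of $\omega$, to two independent trilinear expressions, and then recognizing the remaining Zinbiel coefficients as instances of Eq.~\eqref{qFZ}.

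\emph{Skew-symmetry.} Since $\varpi$ is symmetric and $\omega$ is skew-symmetric, $\mathcal{B}$ defined by Eq.~\eqref{QFli} is skew-symmetric.

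\emph{Nondegeneracy.} This is where the grading enters. If $\omega$ is graded with shift $m$, then $\mathcal{B}(A\otimes B_i, A\otimes B_j)=0$ unless $i+j+m=0$. Hence it suffices that, for each $i$, the pairing $(A\otimes B_i)\times(A\otimes B_{-i-m})\to\Bbbk$ is nondegenerate. This pairing is the tensor product of $\varpi$ with the restriction of $\omega$ to $B_i\times B_{-i-m}$. Both factors are nondegenerate pairings of finite-dimensional spaces; for $\omega$ this follows from global nondegeneracy together with gradedness. A tensor product of two such pairings is nondegenerate, as in the finite-dimensional setting of $\widetilde{\omega}$ used in Subsection~\ref{ssec:Li-Lib}.

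\emph{Cocycle condition.} Fix $g_k=a_k\otimes b_k$ for $k=1,2,3$. Expanding with Proposition~\ref{pro:L-Z-inflie} gives six terms, each of the form $\varpi(\cdot,\cdot)\,\omega(\cdot,\cdot)$. Set
\begin{equation*}
P=\omega(b_1,\, b_2\circ b_3),\qquad Q=\omega(b_1,\, b_3\circ b_2).
\end{equation*}
Using invariance $\omega(x\circ y,z)=\omega(x,y\circ z+z\circ y)$, the consequence $\omega(x\circ y,z)=-\omega(y,x\circ z)$, and skew-symmetry, every $\omega$-factor reduces to a combination of $P$ and $Q$. I expect
\begin{align*}
&\omega(b_1\circ b_2,b_3)=P+Q, && \omega(b_2\circ b_1,b_3)=-P, && \omega(b_3\circ b_1,b_2)=-Q,\\
&\omega(b_1\circ b_3,b_2)=P+Q, && \omega(b_2\circ b_3,b_1)=-P, && \omega(b_3\circ b_2,b_1)=-Q,
\end{align*}
and these signs would be rechecked carefully. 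The cyclic sum then becomes $X\cdot P+Y\cdot Q$, where $X$ and $Y$ are linear combinations of terms $\varpi(a_i\cdot a_j, a_k)$.

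The final step is to show $X=0$ and $Y=0$ separately. Each should be one instance of Eq.~\eqref{qFZ} with a suitable relabeling of $(a_1,a_2,a_3)$, possibly after using the symmetry of $\varpi$ to move arguments. I expect $Y$ to match Eq.~\eqref{qFZ} with the roles of $a_2,a_3$ swapped, and $X$ to match Eq.~\eqref{qFZ} with $a_1,a_3$ swapped.

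The main obstacle is this sign bookkeeping and matching. If $X$ or $Y$ does not visibly reduce to a single instance of \eqref{qFZ}, the fallback is to take a linear combination of permuted copies of \eqref{qFZ}, or to use the left-commutativity of the Zinbiel product. Since all identities involved are trilinear, the argument reduces to pure tensors and so applies to infinite sums in the graded setting without further change.
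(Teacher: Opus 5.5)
Your proposal is correct and follows essentially the paper's proof: expand the six terms, use invariance and skew-symmetry of $\omega$ to reduce everything to $\omega(b_1,b_2\circ b_3)$ and $\omega(b_1,b_3\circ b_2)$ (all six of your sign reductions check out), and kill each coefficient with Eq.~\eqref{qFZ}; your graded nondegeneracy argument also supplies the detail the paper dismisses as ``clear''. One small correction: your predicted relabelings are crossed, since $Y$ is Eq.~\eqref{qFZ} verbatim (which is invariant under $a_1\leftrightarrow a_3$), while $X$ is minus Eq.~\eqref{qFZ} with $a_2\leftrightarrow a_3$.
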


\begin{proof}
Clearly, $\mathcal{B}$ is skew-symmetric and nondegenerate. 
For all $a_{1}, a_{2}, a_{3}\in A$ and $b_{1}, b_{2}, b_{3}\in B$, we have
\begin{align*}
&\;\mathcal{B}([a_{1}\otimes b_{1},\; a_{2}\otimes b_{2}],\; a_{3}\otimes b_{3})
+\mathcal{B}([a_{3}\otimes b_{3},\; a_{1}\otimes b_{1}],\; a_{2}\otimes b_{2})
+\mathcal{B}([a_{2}\otimes b_{2},\; a_{3}\otimes b_{3}],\; a_{1}\otimes b_{1})\\
=&\;\varpi(a_{1}\cdot a_{2},\; a_{3})\omega(b_{1}\circ b_{2},\; b_{3})
-\varpi(a_{2}\cdot a_{1},\; a_{3})\omega(b_{2}\circ b_{1},\; b_{3})
+\varpi(a_{3}\cdot a_{1},\; a_{2})\omega(b_{3}\circ b_{1},\; b_{2})\\[-1mm]
&\quad-\varpi(a_{1}\cdot a_{3},\; a_{2})\omega(b_{1}\circ b_{3},\; b_{2})
+\varpi(a_{2}\cdot a_{3},\; a_{1})\omega(b_{2}\circ b_{3},\; b_{1})
-\varpi(a_{3}\cdot a_{2},\; a_{1})\omega(b_{3}\circ b_{2},\; b_{1})\\
=&\; \Big(\varpi(a_{1}\cdot a_{3},\; a_{2})+\varpi(a_{2}\cdot a_{3},\; a_{1})
-\varpi(a_{1}\cdot a_{2},\; a_{3})-\varpi(a_{2}\cdot a_{1},\; a_{3})\Big)
\omega(b_{2}\circ b_{3},\; b_{1})\\[-2mm]
&\quad+\Big(\varpi(a_{3}\cdot a_{1},\; a_{2})+\varpi(a_{1}\cdot a_{3},\; a_{2})
-\varpi(a_{1}\cdot a_{2},\; a_{3})-\varpi(a_{3}\cdot a_{2},\; a_{1})\Big)
\omega(b_{3}\circ b_{2},\; b_{1}) \\
=&\; 0.
\end{align*}
Therefore, $(A\otimes B, [-,-], \mathcal{B})$ is a quasi-Frobenius $\bz$-graded Lie algebra.
\end{proof}

Consequently, the following result follows from Propositions~\ref{pro:quasi-frob} and Corollary~\ref{cor:pZib-QF}.
\begin{cor}\label{cor:per-qflie}
	Let $(A, \tl, \tr)$ be a pre-Zinbiel algebra and $(A, \cdot)$ be the sub-adjacent Zinbiel algebra. 
	Let $(A \ltimes_{\fl_{\tr}^{\ast}+\fr_{\tl}^{\ast}, -\fr_{\tl}^{\ast}} A^{\ast}, \cdot, \varpi)$ be the quasi-Frobenius Zinbiel algebra obtained in Corollary~\ref{cor:pZib-QF}. 
	Suppose that $(B, \circ, \omega)$ is a quadratic Leibniz algebra and $((A \ltimes_{\fl_{\tr}^{\ast}+\fr_{\tl}^{\ast}, -\fr_{\tl}^{\ast}} A^{\ast})\otimes B,\; [-,-])$ is the induced Lie algebra. 
	Then $((A \ltimes_{\fl_{\tr}^{\ast}+\fr_{\tl}^{\ast}, -\fr_{\tl}^{\ast}} A^{\ast})\otimes B,\; [-,-],\; \mathcal{B})$ is a quasi-Frobenius Lie algebra, where $\mathcal{B}$ is defined by Eqs.~\eqref{QFli}.
\end{cor}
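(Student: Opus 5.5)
The plan is to assemble the statement from two earlier results, Corollary~\ref{cor:pZib-QF} and Proposition~\ref{pro:quasi-frob}. The only extra input is the remark that an ungraded quadratic Leibniz algebra is a special case of a quadratic $\bz$-graded one. So the argument is a specialization followed by an application.

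First, put $D:=A \ltimes_{\fl_{\tr}^{\ast}+\fr_{\tl}^{\ast}, -\fr_{\tl}^{\ast}} A^{\ast}$, with its Zinbiel product $\cdot$ and the symmetric form $\varpi$ of Eq.~\eqref{biform}. Corollary~\ref{cor:pZib-QF} already tells us that $(D,\cdot,\varpi)$ is a quasi-Frobenius Zinbiel algebra. It is finite-dimensional, since $\dim D=2\dim A$.

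Second, view the quadratic Leibniz algebra $(B,\circ,\omega)$ as a quadratic $\bz$-graded Leibniz algebra. Take $B=B_0$ and $B_i=0$ for $i\neq 0$. Then $\omega$ is graded with $m=0$, because $\omega(B_i,B_j)=0$ whenever $i+j\neq 0$ holds trivially. Definition~\ref{def:quad} explicitly allows this case. With this grading, the induced $\bz$-graded Lie algebra $D\otimes B$ of Proposition~\ref{pro:L-Z-inflie} is concentrated in degree $0$. Its bracket is exactly the induced Lie algebra bracket of Proposition~\ref{pro:L-Z-lie}.

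Third, apply Proposition~\ref{pro:quasi-frob} to $(D,\cdot,\varpi)$ and $(B,\circ,\omega)$. It shows that $\mathcal{B}$ from Eq.~\eqref{QFli} is skew-symmetric and nondegenerate: skew-symmetry comes from $\varpi$ symmetric and $\omega$ skew-symmetric, and nondegeneracy holds because a tensor product of nondegenerate forms is nondegenerate. It also gives the cyclic $2$-cocycle identity. So $(D\otimes B,[-,-],\mathcal{B})$ is a quasi-Frobenius $\bz$-graded Lie algebra with $\g=\g_0$. By the last sentence of Definition~\ref{def:Fro-Lie}, this is exactly a quasi-Frobenius Lie algebra.

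There is no real obstacle. The only point needing care is the degree-$0$ specialization: one checks that the hypotheses of Proposition~\ref{pro:quasi-frob} (graded form, finite-dimensional homogeneous components) hold in this trivial grading. The cocycle computation itself is inherited wholesale from the proof of Proposition~\ref{pro:quasi-frob}, which uses only the identity \eqref{qFZ} for $\varpi$ and the invariance and skew-symmetry of $\omega$.
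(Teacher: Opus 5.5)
Your proposal is correct and is exactly the paper's argument: the corollary is obtained by feeding the quasi-Frobenius Zinbiel algebra of Corollary~\ref{cor:pZib-QF} into Proposition~\ref{pro:quasi-frob}. The paper leaves implicit your check that an ungraded quadratic Leibniz algebra is the degree-zero case of Definition~\ref{def:quad}, so that the induced $\bz$-graded Lie algebra is concentrated in degree $0$; that check is all that is needed to make the specialization rigorous.
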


To finish the paper, we present an example of quasi-Frobenius Lie algebras
obtained by pre-Zinbiel algebras.

\begin{ex}\label{ex:qf-lie}
Let $(A, \tl, \tr)$ be a 2-dimensional pre-Zinbiel algebra with a basis $\{e_{1}, e_{2}\}$ whose non-zero products are defined by
\begin{equation*}
	e_{1}\tr e_{1}=2e_{2}, \;\; e_{1}\tl e_{1}=-e_{2},
\end{equation*}
and $(A, \cdot)$ be the sub-adjacent Zinbiel
algebra.
Let $e^{\ast}_{1}, e^{\ast}_{2}$ be the dual basis of $e_{1}, e_{2}$, and we obtain a quasi-Frobenius Zinbiel algebra $(A \ltimes_{\fl_{\tr}^{\ast}+\fr_{\tl}^{\ast}, -\fr_{\tl}^{\ast}} A^{\ast}, \cdot, \varpi)$, 
where non-zero products of $\cdot$ are defined by
\begin{equation*}
	e_{1} \cdot e_{1}=e_{2}, \; e_{1} \cdot e^{\ast}_{2}=e^{\ast}_{1}=e^{\ast}_{2} \cdot e_{1},
\end{equation*}
and the symmetric nondegenerate bilinear form $\varpi$ is given by $\varpi(e_{i}, e^{\ast}_{j})=\varpi(e^{\ast}_{j}, e_{i}) = \delta_{i, j}$.

Let $(V, \diamond, \omega)$ be a 4-dimensional quadratic Leibniz algebra, where $(V, \diamond)$ is the 4-dimensional Leibniz algebra given in Example~\ref{ex:gr-Leib}, i.e., non-zero product $\diamond$ is given by
\begin{equation*}
	v_{1} \diamond v_{2} = v_{1} = -v_{2} \diamond v_{1}, \;\;  
	v_{1}\diamond v_{3} = -v_{4}, \;\;
	v_{2} \diamond v_{3} = v_{3},
\end{equation*}
and the skew-symmetric invariant nondegenerate bilinear form $\omega$ is defined by
\begin{equation*}
	\omega(v_{1}, v_{3})=1=-\omega(v_{3}, v_{1}),\; 
	\omega(v_{2}, v_{4})=1=-\omega(v_{4}, v_{2}). 
\end{equation*}
By Corollary~\ref{cor:per-qflie}, we obtain a 16-dimensional quasi-Frobenius Lie algebra $((A \ltimes_{\fl_{\tr}^{\ast}+\fr_{\tl}^{\ast}, -\fr_{\tl}^{\ast}} A^{\ast})\otimes B,\; [-,-],\; \mathcal{B})$, 
where the non-zero bracket is given by
\begin{align*}
&[e_{1}\otimes v_{1},\; e_{1}\otimes v_{2}]=2e_{2}\otimes v_{1}
=-[e_{1}\otimes v_{2},\; e_{1}\otimes v_{1}],\quad \\
&[e_{1}\otimes v_{2},\; e_{1}\otimes v_{3}]=e_{2}\otimes v_{3}
=-[e_{1}\otimes v_{3},\; e_{1}\otimes v_{2}],\\[-1mm]
&[e_{1}\otimes v_{3},\; e_{1}\otimes v_{1}]=e_{2}\otimes v_{4}
=-[e_{1}\otimes v_{1},\; e_{1}\otimes v_{3}],\quad\;\; \\
&[e_{1}\otimes v_{3},\; e^{\ast}_{2}\otimes v_{1}]=e^{\ast}_{1}\otimes v_{4}
=-[e^{\ast}_{2}\otimes v_{1},\; e_{1}\otimes v_{3}],\\[-1mm]
&[e_{1}\otimes v_{1},\; e^{\ast}_{2}\otimes v_{2}]=2e^{\ast}_{1}\otimes v_{1}
=-[e^{\ast}_{2}\otimes v_{2},\; e_{1}\otimes v_{1}],\quad \\
&[e_{1}\otimes v_{2},\; e^{\ast}_{2}\otimes v_{3}]=e^{\ast}_{1}\otimes v_{3}
=-[e^{\ast}_{2}\otimes v_{3},\; e_{1}\otimes v_{2}],
\end{align*}
and the skew-symmetric nondegenerate bilinear form $\mathcal{B}$ is given by
\begin{align*}
&\mathcal{B}(e_{1}\otimes v_{1},\; e^{\ast}_{1}\otimes v_{3})=1
=-\mathcal{B}(e^{\ast}_{1}\otimes v_{3},\; e_{1}\otimes v_{1}),\quad
\mathcal{B}(e_{1}\otimes v_{2},\; e^{\ast}_{1}\otimes v_{4})=1
=-\mathcal{B}(e^{\ast}_{1}\otimes v_{4},\; e_{1}\otimes v_{2}),\\[-1mm]
&\mathcal{B}(e_{2}\otimes v_{1},\; e^{\ast}_{2}\otimes v_{3})=1
=-\mathcal{B}(e^{\ast}_{2}\otimes v_{3},\; e_{2}\otimes v_{1}),\quad
\mathcal{B}(e_{2}\otimes v_{2},\; e^{\ast}_{2}\otimes v_{4})=1
=-\mathcal{B}(e^{\ast}_{2}\otimes v_{4},\; e_{2}\otimes v_{2}),
\end{align*}
and others are all zero.
\end{ex}

\bigskip
\noindent
{\bf Acknowledgements. } This work was financially supported by National
Natural Science Foundation of China (No. 11771122).

 \end{document}